\def\fnum{equation}
\newtheorem{theorem}{Theorem}[section]
\newtheorem{lemma}[theorem]{Lemma}
\newtheorem{propos}[theorem]{Proposition}
\newtheorem{corol}[theorem]{Corollary}
\newtheorem{definition}[theorem]{Definition}
\newtheorem{remark}[theorem]{Remark}
\numberwithin{equation}{section}
\newcommand{\Wedge}{{\rm Wed}\,}
\newcommand{\an}{{\rm An}}
\newcommand{\eps}{{\varepsilon}}
\newcommand{\diam}{{\text {diam}}}
\newcommand{\dist}{{\text {dist}}}
\newcommand{\R}{{\rm R}}
\def\RR{{\bf  R}}
\def\SS{{\bf  S}}
\newcommand{\Length}{{\text {Length}}}
\newcommand\diff{{\rm Diff}_0}
\newcommand\D{{\mathcal{D}}}
\newcommand\B{{\mathcal{B}}}
\def\rn#1{{\bf R}^{#1}}
\def\ov#1{\overline{#1}}
\newcommand\haus{\mathcal{H}}
\newcommand\Z{{\mathbb Z}}
\newcommand\N{{\mathbb N}}
\newcommand\res{\mathop{\hbox{\vrule height 7pt width .5pt depth 0pt
\vrule height .5pt width 6pt depth 0pt}}\nolimits}
\newcommand\supp{{\rm supp}\,}  
\newcommand\Inj{{\textrm{Inj}\,}}
\newcommand\F{{\mathcal{F}}}
\newcommand\An{{\mathcal{AN}}}
\newcommand\Is{{\mathfrak{Is}}}
\newcommand\gen{{\bf g}}
\begin{document}

\title[Genus bounds for minimal surfaces arising from 
min-max construction]
{Genus bounds for minimal surfaces arising from min-max constructions}

\author{Camillo De Lellis}%
\author{Filippo Pellandini}%
\address{Institut f\"ur Mathematik, Universit\"at Z\"urich, 
CH-8057 Z\"urich}

\email{camillo.delellis@math.uzh.ch and 
filippo.pellandini@math.uzh.ch}

\begin{abstract}
In this paper we prove genus bounds for closed embedded minimal 
surfaces in a closed $3$-dimensional manifold constructed via min-max 
arguments. A stronger estimate was announced by Pitts and 
Rubistein but 
to our knowledge its proof has never been published. 
Our proof follows ideas of Simon and uses an extension of
a famous result of Meeks, Simon and Yau on the convergence of minimizing 
sequences of isotopic surfaces. This result is proved in the second
part of the paper.

\end{abstract}

\maketitle
 
\tableofcontents

\section{Introduction}

\subsection{Min--max surfaces}
In \cite{CD} Tobias H. Colding and the second author
started a survey on  
constructing closed embedded minimal surfaces in a closed 
$3$-dimensional manifold 
via min--max arguments, including results 
of F. Smith, L. Simon, J. Pitts and H. Rubinstein. This paper
completes the survey by giving genus bounds 
for the final minmax surface.

The basic idea of min--max arguments over
sweep-outs goes back to Birkhoff, 
who used such a method to find simple closed
geodesics on spheres. In particular when $M^2$ is the $2$-dimensional
sphere we can find a $1$--parameter family of curves starting and ending
at a point curve in such a way that the induced map $F:\SS^2\to \SS^2$
has nonzero degree. 
Birkhoff's argument  (or the min-max
argument) allows us to conclude that $M$ has a nontrivial closed geodesic
of length less than or equal to the length of the longest curve in the
$1$-parameter family. A curve shortening argument gives that the
geodesic obtained in this way is simple.

Following \cite{CD} we introduce a suitable generalized setting
for sweepouts of $3$--manifolds by 
two--dimensional surfaces.
From now on, $M$, $\diff$ and $\Is$ will denote, respectively,
a closed $3$--dimensional Riemannian manifold,
the identity component 
of the diffeomorphism group of $M$, and 
the set
of smooth isotopies.  Thus $\Is$ consists of those maps 
\ $\psi\in C^\infty ([0,1]\times M, M)$ such that
$\psi(0, \cdot)$ is the identity and $\psi(t, \cdot)\in \diff$ 
for every $t$.

\begin{definition}\label{d:gensur}
A family 
$\{\Sigma_t\}_{t\in [0,1]}$ of surfaces of $M$ is said to be 
{\em continuous}\/ if 
\begin{itemize}
\item[(c1)] $\haus^2 (\Sigma_t)$ is 
a continuous function of $t$;
\item[(c2)] $\Sigma_t\to \Sigma_{t_0}$ in the
  Hausdorff topology whenever $t\to t_0$. 
\end{itemize}

A family $\{\Sigma_t\}_{t\in [0,1]}$ of subsets of $M$
is said to be a {\em generalized family}\/ of surfaces if there are a 
finite subset $T$ of $[0,1]$ and a finite set of points $P$ in $M$ such that\ 
\begin{itemize}
\item[1.] (c1) and (c2) hold;
\item[2.] $\Sigma_t$ is a surface for every $t\not \in T$;
\item[3.] For $t\in T$, $\Sigma_t$ is a surface in $M\setminus P$.
\end{itemize}

\end{definition}

With a small abuse of notation, we shall use the word ``surface''
even for the sets $\Sigma_t$ with $t\in T$.
To avoid confusion, families of surfaces will always be denoted by 
$\{\Sigma_t\}$. Thus, when referring to a surface a subscript 
will denote a real parameter, whereas a superscript will 
denote an integer as in a sequence.

Given a generalized family $\{\Sigma_t\}$ 
we can generate new generalized 
families via the following procedure.
Take an arbitrary map $\psi\in C^\infty 
([0,1]\times M, M)$ such that $\psi(t, \cdot)\in \diff$ for each $t$ and
define $\{\Sigma'_t\}$ by $\Sigma'_t=\psi (t, \Sigma_t)$.\label{i:proc}
We will say that a set $\Lambda$ of generalized families is 
{\em saturated}\/ if it is closed under this operation. 

\begin{remark}\label{r:P}
For technical reasons we require an additional property 
for any saturated set $\Lambda$ considered in this paper:
the existence of some $N=N(\Lambda)<\infty$ such that 
for any $\{\Sigma_t\}\subset \Lambda$, 
the set $P$ in Definition 
\ref{d:gensur} consists of at most $N$ points.  
\end{remark}

Given a family $\{\Sigma_t\}\in \Lambda$ we denote 
by $\F (\{\Sigma_t\})$
the area of its maximal slice and by $m_0 (\Lambda)$
the infimum of $\F$ taken over all families of $\Lambda$; that is, 
\begin{eqnarray}\label{e:min--max}
&&\F (\{\Sigma_t\}) = \max_{t\in [0,1]} \haus^2 (\Sigma_t) 
\qquad \text{ and }\\
&&m_0 (\Lambda) = \inf_{\Lambda} \F =
\inf_{\{\Sigma_t\}\in \Lambda}\, \left[ \max_{t\in [0,1]} \haus^2 
(\Sigma_t)\right]\,. 
\end{eqnarray}

If $\lim_n \F (\{\Sigma_t\}^n)=m_0 (\Lambda)$, then we say that 
the sequence of generalized families of surfaces 
$\{\{\Sigma_t\}^n\}\subset \Lambda$ is a 
{\em minimizing sequence}. Assume $\{\{\Sigma_t\}^n\}$ is a minimizing 
sequence and let $\{t_n\}$ 
be a sequence of parameters. If the areas of the slices $\{\Sigma^n_{t_n}\}$ converge 
to $m_0$, i.e. if $\haus^2 (\Sigma^n_{t_n})\to m_0 (\Lambda)$, 
then we say that $\{\Sigma^n_{t_n}\}$
is a {\em min--max sequence}.

An important point in the min--max construction is to find a saturated $\Lambda$ with $m_0 (\Lambda)>0$. 
For instance, this can be done by using the following 
elementary proposition proven in the Appendix of \cite{CD}.

\begin{propos}\label{p:morse}
Let $M$ be a closed $3$-manifold with a Riemannian metric and let 
$\{\Sigma_t\}$ be the level sets of a Morse function. The
smallest saturated set 
$\Lambda$ containing the family $\{\Sigma_t\}$
has $m_0 (\Lambda)>0$.
\end{propos}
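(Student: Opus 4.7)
Realize the given family as $\Sigma_t = f^{-1}(t)$ for a Morse function $f\colon M \to [0,1]$; let $T\subset(0,1)$ be its finite set of critical values and set $\Omega_t := \{f<t\}$. Since the composition of two operations of the form $\Sigma_t \mapsto \psi(t,\Sigma_t)$ is again of the same form (with $\tilde\psi(t,\cdot) = \psi_2(t,\cdot)\circ\psi_1(t,\cdot)\in \diff$), the smallest saturated set $\Lambda$ containing $\{\Sigma_t\}$ consists precisely of the families
\[
\Sigma'_t \ =\ \psi(t,\Sigma_t),\qquad \psi\in C^\infty([0,1]\times M,M),\ \psi(t,\cdot)\in\diff \text{ for every } t.
\]
My plan is to produce, inside every such family, a slice $\Sigma'_{t^\ast}$ whose area is bounded below by a positive constant $c_0=c_0(M)$.

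Given $\{\Sigma'_t\}\in \Lambda$, I will examine the volume function
\[
V(t)\ :=\ \haus^3\bigl(\psi(t,\Omega_t)\bigr) \ =\ \int_{\Omega_t}\bigl|\det D_x\psi(t,x)\bigr|\, d\haus^3(x).
\]
Joint smoothness of $\psi$ on the compact cylinder $[0,1]\times M$ and the continuity of $t\mapsto \haus^3(\Omega_t)$ imply that $V$ is continuous. Since $\Omega_0 = \emptyset$, $\Omega_1$ has full measure in $M$, and $\psi(t,\cdot)$ is a diffeomorphism of $M$ for every $t$, one obtains $V(0)=0$ and $V(1)=\haus^3(M)$. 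The intermediate value theorem, combined with the finiteness of $T$, then produces $t^\ast\notin T$ with $V(t^\ast)=\tfrac12\haus^3(M)$. At this regular value $\Sigma_{t^\ast}$ is a smooth closed embedded surface separating $\Omega_{t^\ast}$ from its complement, so $\Sigma'_{t^\ast}$ is the smooth boundary of the open set $\psi(t^\ast,\Omega_{t^\ast})$, which has volume $\tfrac12\haus^3(M)$. The isoperimetric inequality on $M$ --- equivalently, the positivity and continuity of its isoperimetric profile on $(0,\haus^3(M))$ --- then yields $\haus^2(\Sigma'_{t^\ast})\geq c_0$ for some $c_0=c_0(M)>0$, so $\F(\{\Sigma'_t\})\geq c_0$ and hence $m_0(\Lambda)\geq c_0>0$.

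The continuity of $V$ is routine: via the change-of-variables formula it reduces to the uniform $C^1$-continuity of $\psi$ in $t$ together with the elementary estimate $\haus^3(\Omega_t \triangle \Omega_s) \to 0$ as $s\to t$. The only substantive (and still standard) ingredient is the positivity of the isoperimetric constant $c_0$ on the closed Riemannian manifold $M$; this rests on the compactness of $M$ and can be secured, for instance, by producing a perimeter-minimizing region of volume $\tfrac12\haus^3(M)$, whose boundary must have positive area since the region is neither empty nor all of $M$.
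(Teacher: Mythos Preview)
The paper does not itself contain a proof of this proposition; it explicitly cites the Appendix of \cite{CD}. Your argument is correct and is in fact the standard one (and the one given in \cite{CD}): identify the smallest saturated set as the families $\{\psi(t,\Sigma_t)\}$, track the enclosed volume $V(t)=\haus^3(\psi(t,\Omega_t))$, use the intermediate value theorem to find a regular $t^\ast$ with $V(t^\ast)$ bounded away from $0$ and $\haus^3(M)$, and invoke the isoperimetric inequality on the closed manifold $M$.

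One small remark on the step ``the intermediate value theorem, combined with the finiteness of $T$, then produces $t^\ast\notin T$ with $V(t^\ast)=\tfrac12\haus^3(M)$'': as written this is not quite immediate, since a priori $V^{-1}(\tfrac12\haus^3(M))$ could be a finite subset of $T$. The clean fix is the one you implicitly have in mind: since $V$ is continuous with $V(0)=0$ and $V(1)=\haus^3(M)$, the preimage $V^{-1}\bigl([\tfrac13\haus^3(M),\tfrac23\haus^3(M)]\bigr)$ contains a nondegenerate interval, hence a point $t^\ast\notin T$; the isoperimetric profile is bounded below by a positive constant on the compact interval $[\tfrac13\haus^3(M),\tfrac23\haus^3(M)]$, and the rest of your argument goes through unchanged.
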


The paper \cite{CD} reports a proof of the following 
regularity result.

\begin{theorem}\label{t:SS}[Simon--Smith] 
Let $M$ be a closed $3$-manifold with a Riemannian metric.  
For any saturated $\Lambda$, there is 
a min--max sequence $\Sigma^n_{t_n}$
converging in the sense of varifolds to a
smooth embedded minimal surface $\Sigma$ with area
$m_0 (\Lambda)$ (multiplicity is allowed).
\end{theorem}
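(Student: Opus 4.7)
The plan is to follow the three-step scheme developed by Pitts and adapted by Simon--Smith. Starting with an arbitrary minimizing sequence $\{\Sigma_t\}^n \in \Lambda$, I would first \emph{pull-tight} in order to force every varifold limit of a min-max sequence to be stationary; then upgrade this to a minimizing sequence whose max-area slices are \emph{almost-minimizing} in sufficiently many small annuli; and finally deduce the smoothness of the limit from the extension of the Meeks--Simon--Yau replacement theorem which is proved in Part II of this paper.

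For the pull-tight step, I would construct, on a neighborhood of the set of varifolds of mass $\le m_0(\Lambda)+1$, a continuous assignment $V\mapsto \{\phi^V_s\}_{s\in[0,1]}$ of smooth isotopies that strictly decreases mass on the non-stationary varifolds, with magnitude of decrease controlled by a varifold-distance to the stationary cone. Applying this isotopy slicewise to each $\Sigma^n_t$ and invoking saturation produces a new minimizing sequence with the property that every subsequential varifold limit of a min-max sequence is stationary. The finiteness assumption in Remark~\ref{r:P} is what keeps the new families inside a saturated class.

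The second step is the heart of the proof. Using a combinatorial argument on families of pairwise disjoint concentric annuli $\{\cA^j_x\}$ around points $x\in M$, together with one further use of saturation, I would extract a min-max sequence $\Sigma^n_{t_n}$ converging as varifolds to some $V$ such that at every $x\in \supp V$ the surface $\Sigma^n_{t_n}$ is $\eps_n$-almost-minimizing in at least one annulus from each of an unbounded family of disjoint annuli centered at $x$, with $\eps_n\to 0$. The almost-minimizing condition is the standard one: no smooth isotopy supported in the annulus decreases $\haus^2$ by more than $\eps_n$ unless the area first exceeds $\haus^2(\Sigma^n_{t_n})+\eps_n/8$ along the way. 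The interplay between the parameter $t$ and this geometric condition is the main obstacle, and requires careful combinatorial bookkeeping to avoid destroying the min-max character of the sequence.

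For regularity, I fix $x\in \supp V$ and an annulus $\cA$ in which the $\Sigma^n_{t_n}$ are $\eps_n$-almost-minimizing. In $\cA$ I would run an isotopic area minimization and apply the extended Meeks--Simon--Yau convergence theorem from Part II to produce a smooth stable minimal \emph{replacement} $V'$ that agrees with $V$ outside $\cA$. Since this can be repeated in arbitrarily many disjoint annuli centered at $x$, a standard argument of Pitts identifies $V$, in a neighborhood of $x$, with a smooth embedded minimal surface of integer multiplicity; the stationarity of $V$ established in the first step is needed here to patch replacements across overlapping annuli. Covering $\supp V$ by such neighborhoods yields a global smooth embedded minimal surface $\Sigma$, and the identity $\haus^2(\Sigma)=m_0(\Lambda)$ follows from lower semicontinuity of mass under varifold convergence together with the minimizing property.
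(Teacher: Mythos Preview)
Your three-step outline (pull-tight to force stationarity of cluster points, combinatorial extraction of an almost-minimizing min-max sequence, then regularity via replacements) is exactly the structure the paper reports for this theorem, which it does not reprove here but cites from \cite{CD}: Proposition~\ref{p:goodbis} is the pull-tight step, Proposition~\ref{p:existbis} is the Almgren--Pitts combinatorial step, and Theorem~\ref{t:regularity} is the regularity statement.

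One inaccuracy worth flagging: you say the regularity step uses ``the extension of the Meeks--Simon--Yau replacement theorem which is proved in Part II of this paper.'' That conflates two different things. The regularity result Theorem~\ref{t:regularity} is taken from \cite{CD} and relies on the \emph{interior} MSY theory (what appears here as Proposition~\ref{MSYinterno}); no boundary analysis is needed for Theorem~\ref{t:SS}. The boundary extension proved in the second half of this paper (Proposition~\ref{p:MSYbis}, with its convex-hull property, squeezing lemma, and Allard boundary regularity) is a separate, new ingredient introduced solely to prove Simon's Lifting Lemma (Proposition~\ref{p:lifting}) and thereby the genus bounds of Theorem~\ref{t:main2}. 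So your sketch is right in spirit but misattributes which MSY-type result is doing the work.

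A minor further point: the equality of mass with $m_0(\Lambda)$ is automatic from varifold convergence (mass is continuous, not merely lower semicontinuous, in the varifold topology on a compact manifold), so you do not need to invoke the minimizing property separately for that.
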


\subsection{Genus bounds} In this note we bound the topology of 
$\Sigma$ under the assumption
that the $t$--dependence of $\{\Sigma_t\}$ 
is smoother than
just the continuity required in Definition \ref{d:gensur}. 
This is the content of the next definition.

\begin{definition}\label{d:gensur2}
A generalized family $\{\Sigma_t\}$ 
as in Definition \ref{d:gensur}
is said to be {\em smooth} if:
\begin{itemize}
\item[(s1)] $\Sigma_t$ varies smoothly in $t$ on 
$[0,1]\setminus T$;
\item[(s2)] For $t\in T$, $\Sigma_\tau\to \Sigma_t$ smoothly
in $M\setminus P$.
\end{itemize}
Here $P$ and $T$ are the sets of requirements 2. and 3. of
Definition \ref{d:gensur}. We assume further
that $\Sigma_t$ is orientable for any $t\not\in T$. 
\end{definition}

Note that, if a set $\Lambda$ consists of smooth
generalized families, then the elements of its saturation 
are still smooth generalized families. Therefore
the saturated set considered in Proposition \ref{p:morse}
is smooth.

We next introduce some notation which will be
consistently used during the proofs.
We decompose the surface $\Sigma$ of Theorem \ref{t:SS}
as $\sum_{i=1}^{N}n_i\Gamma^i$, 
where the $\Gamma^i$'s are the connected components
of $\Sigma$, counted without multiplicity, and 
$n_i\in \mathbb{N}\setminus \{0\}$ for every $i$.
We further divide the components $\{\Gamma^i\}$ into
two sets: the orientable ones, denoted by
$\mathcal{O}$, and the non--orientable ones,
denoted by $\mathcal{N}$. We are now ready
to state the main theorem of this paper.

\begin{theorem}\label{t:main}
Let $\Lambda$ be a saturated set of smooth generalized
families and $\Sigma$ and $\Sigma^n_{t_n}$ 
the surfaces produced in the proof of Theorem \ref{t:SS}
given in \cite{CD}. Then 
\begin{equation}\label{e:claim}
\sum_{\Gamma^i\in \mathcal{O}}\gen(\Gamma^i)
+ \frac{1}{2}\sum_{\Gamma^i\in \mathcal{N}} (\gen (\Gamma^i)-1)
\;\leq\; \gen_0 \;:=\; \liminf_{j\uparrow \infty} \liminf_{\tau\to t_j}
\gen (\Sigma^j_\tau)\, .
\end{equation}
\end{theorem}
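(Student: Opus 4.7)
The plan is to obtain the bound one component of $\Sigma$ at a time, localized in a tubular neighborhood, and then sum. The key input beyond the statement of Theorem \ref{t:SS} is that the min--max sequence constructed in \cite{CD} actually converges smoothly away from a finite set of curvature concentration points (an outcome of the almost-minimizing property and the Schoen--Simon type regularity built into the argument), not merely as varifolds. I fix $\delta>0$ so small that the tubes $T_{2\delta}(\Gamma^i)$ are pairwise disjoint, each deformation-retracting smoothly to $\Gamma^i$, and choose a finite set $S^i\subset \Gamma^i$ for each $i$ outside which the convergence is smooth. For $j$ large and $\tau$ close enough to $t_j$, the slice $\Sigma^j_\tau\cap (T_\delta(\Gamma^i)\setminus \bigcup_{p\in S^i} B_\rho(p))$ is then a smooth covering surface over the punctured $\Gamma^i$ of total multiplicity $n_i$.

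Next I analyze two cases. If $\Gamma^i\in\mathcal{O}$, the cover splits as a disjoint union of $n_i$ graphs, each a copy of $\Gamma^i$ with $|S^i|$ small disks removed. Capping off the boundary circles (using the normal disk fibers of the tube), we obtain a subsurface of $\Sigma^j_\tau$ of genus at least $n_i\,\gen(\Gamma^i)\geq\gen(\Gamma^i)$; any additional topology arising from the actual surface inside the bad balls $B_\rho(p)$ can only add handles, never erase them. If instead $\Gamma^i\in\mathcal{N}$, orientability of $\Sigma^j_\tau$ (guaranteed by (s1)--(s2) in Definition \ref{d:gensur2}) forces the local cover to factor through the orientation double cover $\widetilde{\Gamma^i}$; in particular $n_i$ is even, say $n_i=2m_i$, and the cover is $m_i$ disjoint copies of $\widetilde{\Gamma^i}$. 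Because $\chi(\widetilde{\Gamma^i})=2\chi(\Gamma^i)$ gives $\gen(\widetilde{\Gamma^i})=\gen(\Gamma^i)-1$, the same capping argument yields a genus contribution from $T_\delta(\Gamma^i)$ of at least $m_i(\gen(\Gamma^i)-1)\geq \tfrac{1}{2}(\gen(\Gamma^i)-1)$.

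Finally, because the $T_\delta(\Gamma^i)$ are pairwise disjoint and genus is additive over disjoint subsurfaces, summing the two cases over $i$ produces
\[
\gen(\Sigma^j_\tau)\;\geq\;\sum_{\Gamma^i\in\mathcal{O}}\gen(\Gamma^i)+\frac{1}{2}\sum_{\Gamma^i\in\mathcal{N}}(\gen(\Gamma^i)-1)
\]
for all large $j$ and $\tau$ close to $t_j$. Taking $\liminf_{\tau\to t_j}$ and then $\liminf_{j\to\infty}$ gives \eqref{e:claim}.

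The main obstacle is the rigorous treatment of the bad balls $B_\rho(p)$ around the concentration points: one must verify that whatever reconnection of sheets occurs there (bridges, necks, pairs of pants, and so on) only adds handles to the capped surface, never subtracts them. This is where the technical heart of the paper lies and is precisely what the extension of Meeks--Simon--Yau mentioned in the abstract is designed to handle; without such a tool one could in principle worry that a minimizing isotopy across a bad ball could cut handles open and drop the local genus. A secondary, smaller obstacle is justifying the smoothness of convergence away from finitely many points at the level needed to speak of a genuine graphical/covering structure; this is standard for almost-minimizing varifolds but must be carried through for the specific sequences produced by the proof of Theorem \ref{t:SS}.
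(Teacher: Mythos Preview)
Your proposal rests on an assertion that is not available: that the min--max sequence $\Sigma^j$ converges \emph{smoothly} to $\Gamma$ away from finitely many points, so that $\Sigma^j_\tau$ restricted to a punctured tube is a genuine $n_i$-sheeted cover of the punctured $\Gamma^i$. The almost-minimizing property together with Schoen--Simon type estimates yields regularity of the \emph{limit} varifold; it does not give graphical convergence of the \emph{approximating sequence}. In the paper, smooth convergence is obtained only for the local \emph{replacements} --- the minimizers of Problem $(\Sigma^j,\Is_j(B_\rho(x),\Sigma^j))$ in small balls --- and the entire purpose of the machinery (Proposition~\ref{p:MSYbis} on minimizing sequences with boundary, and its consequence, Simon's Lifting Lemma, Proposition~\ref{p:lifting}) is precisely to transfer information from those replacements back to the original $\Sigma^j$. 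So the ``secondary, smaller obstacle'' you flag at the end is in fact the principal one, and it is not standard.

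The paper's route is accordingly indirect. Instead of a global sheeting picture, one lifts individual simple closed curves: for each $\Gamma^i$ pick $2m_i$ homologically independent curves and use the Lifting Lemma to produce closed curves on $\Sigma^j\cap T_\eps\Gamma^i$ homotopic in the tube to positive multiples of the originals. A separate surgery argument (Proposition~\ref{p:intorno}) confines $\Sigma^j_\tau$ to $T_{2\eps}\Gamma$ without raising genus, and the lifts then force $2m_i$ independent first-homology classes in each piece $\tilde{\Sigma}^j_\tau\cap T_{2\eps}\Gamma^i$. Notice this yields only the multiplicity-free contribution $m_i$, not the $n_i m_i$ your covering heuristic suggests; Section~\ref{s:discuss} explains why the stronger multiplicative bound lies beyond these methods. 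That your argument would deliver it for free is a further sign that the assumed covering structure is doing work that has not actually been done. (A related unjustified step: even granting graphical convergence, an $n_i$-fold unramified cover of a punctured orientable surface need not split into $n_i$ trivial sheets.)
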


\begin{remark}\label{r:epsilonj}
According to
our definition, $\Sigma^j_{t_j}$ 
is not necessarily a smooth submanifold, as
$t_j$ could be one of the exceptional parameters of point 3. in 
Definition \ref{d:gensur}. However, for each fixed
$j$ there is an $\eta>0$ such that $\Sigma^j_t$ is a smooth 
submanifold for every 
$t \in ]t_j-\eta, t_j[\cup ]t_j, t_j+\eta[$.
Hence the right hand side of \eqref{e:claim} makes sense.
\end{remark}

In fact the inequality \eqref{e:claim} holds with
$\gen_0 = \liminf_j \gen (\Sigma^j)$
for every limit $\Sigma$ of a sequence of surfaces $\Sigma^j$'s 
that enjoy certain
requirements of variational nature, i.e. that
are {\em almost minimizing in sufficiently small annuli}.
The precise statement will be given in
Theorem \ref{t:main2}, after introducing the 
suitable concepts. 

As usual, when $\Gamma$ is an orientable $2$--dimensional 
connected surface, its genus
$\gen (\Gamma)$ is defined as the number of handles that one
has to attach to a sphere in order to get a surface 
homeomorphic to $\Gamma$. When $\Gamma$ is non--orientable
and connected,
$\gen (\Gamma)$ is defined as the number of cross caps that
one has to attach to a sphere in order to get a surface 
homeomorphic to $\Gamma$ (therefore, if $\chi$ is the Euler
characteristic of the surface, then 
$$
\gen (\Gamma) \;=\left\{
\begin{array}{ll} 
\textstyle{\frac{1}{2}} (2-\chi) & \mbox{ if $\Gamma\in \mathcal{N}$}\\
2-\chi & \mbox{ if $\Gamma\in \mathcal{O}$}
\end{array}\right.
$$
see \cite{Massey}). 
For surfaces with more than one
connected component, the genus is simply the sum
of the genus of each connected component. 

Our genus estimate \eqref{e:claim} is weaker
than the one announced by Pitts and Rubinstein
in \cite{PR1}, which reads as follows
(cp. wih Theorem 1 and Theorem 2 in \cite{PR1}):
\begin{equation}\label{e:stronger}
\sum_{\Gamma^i\in \mathcal{O}} n_i \gen(\Gamma^i)
+ \frac{1}{2} \sum_{\Gamma^i\in \mathcal{N}} n_i \gen (\Gamma^i)
\;\leq\; \gen_0\, .
\end{equation}
In Section \ref{s:discuss} a very elementary example
shows that \eqref{e:stronger}
is false for sequences of almost minimizing surfaces
(in fact even for sequences which are locally strictly minimizing). 
In this case the correct estimate should be
\begin{equation}\label{e:correct}
\sum_{\Gamma^i\in \mathcal{O}} n_i \gen(\Gamma^i)
+ \frac{1}{2} \sum_{\Gamma^i\in \mathcal{N}} n_i (\gen (\Gamma^i)-1)
\;\leq\; \gen_0\, .
\end{equation}
Therefore, the improved estimate \eqref{e:stronger}
can be proved only by exploiting an argument of 
more global nature, using a more detailed analysis of the 
min--max construction.

The estimate \eqref{e:correct} 
respects the rough intuition that the approximating
surfaces $\Sigma^j$ are, after appropriate surgeries, isotopic
to coverings of the surfaces $\Gamma^i$. For instance $\Gamma$
can consist of a single component that is a real projective space,
and $\Sigma^j$ might be the boundary of a tubular neighborhood
of $\Gamma$ of size $\eps_j\downarrow 0$, i.e. a sphere. In this
case $\Sigma^j$ is a double cover of $\Gamma$.   

Our proof uses the ideas of an unpublished 
argument of Simon, reported by Smith in \cite{Sm} to show
the existence of an embedded minimal $2$--sphere
when $M$ is a $3$--sphere. These ideas do not
seem enough to show \eqref{e:stronger}: its proof probably 
requires a much more
careful analysis. In Section \ref{s:discuss} we discuss this issue.


\begin{remark}
The unpublished argument
of Simon has been used also by Gr\"uter and Jost in \cite{GJ}.
The core of Simon's argument is reported here with a 
technical simplification. We then give
a detailed proof of an auxiliary proposition which plays
a fundamental role in the argument. This part is, to our 
knowledge, new: neither Smith, nor Gr\"uter and Jost provide
a proof of it.
Smith suggests that the
proposition can be proved by suitably modifying the
arguments of \cite{MSY} and \cite{AS}. Though this is indeed
the case, the strategy suggested by Smith leads
to a difficulty which we overcome with a different
approach: see the discussion in Section \ref{s:MSY3}.
Moreover, \cite{Sm} does not discuss the ``convex--hull
property'' of Section \ref{s:MSY1},
which is a basic prerequisite to apply the
boundary regularity theory of Allard
in \cite{All2} (in fact we do not know of any boundary
regularity result in the minimal surface theory which
does not pass through some kind of convex hull
property). 
\end{remark}

\subsection{An example}
We end this introduction with a brief discussion of
how a sequence of closed surface $\Sigma^j$ could converge,
in the sense of varifolds, to a smooth surface
with higher genus. This example is
a model situation which must be ruled out by any
proof of a genus bound. First take
a sphere in $\RR^3$ and squeeze it in one direction
towards a double copy of a disk (recall that the 
convergence in the sense of varifolds does
not take into account the orientation). Next take the disk
and wrap it to form a torus in the standard way. With a 
standard diagonal
argument we find a sequence of smooth embedded 
spheres in $\RR^3$ which, in
the sense of varifolds, converges to a double copy of an
embedded torus. See Figure \ref{f:genusfails} below.

\begin{figure}[htbp]
\begin{center}
    \input{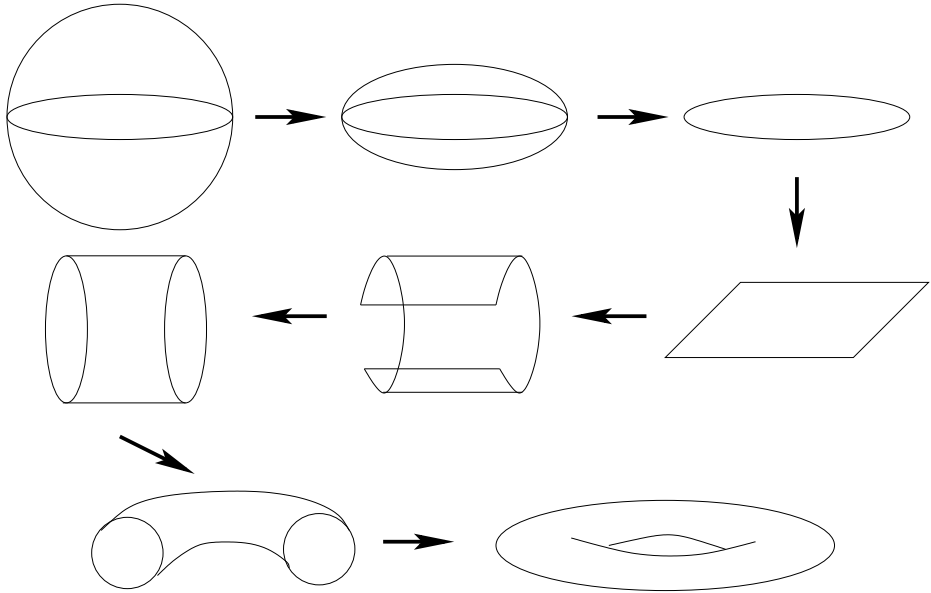}
    \caption{Failure of genus bounds under varifold
convergence. A sequence of embedded spheres converges
to a double copy of a torus.}
    \label{f:genusfails}
\end{center}
\end{figure}

This example does not occur in min--max sequences
for variational reasons. In particular, it follows
from the arguments of this paper that such a sequence
does not have the almost minimizing property in (sufficiently
small) annuli discussed
in Section \ref{s:prel}. 

\subsection{Plan of the paper} Section \ref{s:prel}
contains: some preliminaries on notational conventions,
a summary of the material of \cite{CD} used
in this note and the most precise statement of
the genus bounds (Theorem \ref{t:main2}). Section
\ref{s:overview} gives an overview of the proof of Theorem
\ref{t:main2}. In particular it reduces it to a statement
on lifting of paths, which we call Simon's Lifting Lemma
(see Proposition \ref{p:lifting}). Sections \ref{s:MSYbis}
and \ref{s:lifting} contain a proof of Simon's Lifting Lemma.
In Section \ref{s:MSYbis} we state a suitable modification
of a celebrated result of Meeks, Simon and Yau
(see \cite{MSY}) in which we handle minimizing sequences
of isotopic surfaces with boundaries (see Proposition
\ref{p:MSYbis}). 

Sections \ref{s:MSY1}, \ref{s:MSY2}, \ref{s:MSY3},
\ref{s:MSY4} and \ref{s:MSY5} show how to modify the theory of 
\cite{MSY} and \cite{AS} in order
to prove Proposition \ref{p:MSYbis}. 
Section \ref{s:MSY1}
discusses the convex--hull properties needed for the
boundary regularity.
In Section
\ref{s:MSY2} we introduce and prove the ``squeezing lemmas''
which allow to pass from almost--minimizing sequences
to minimizing sequences. Section \ref{s:MSY3} discusses
the $\gamma$--reduction and how one applies it
to get the interior regularity. We also point out why the
$\gamma$--reduction cannot be applied directly to 
the surfaces of Proposition \ref{p:MSYbis}.
Section \ref{s:MSY4} proves the boundary regularity.
Finally, section \ref{s:MSY5} handles the
part of Proposition \ref{p:MSYbis} involving limits of connected
components.

Section \ref{s:discuss} discusses 
the subtleties of the stronger
estimates \eqref{e:stronger} and \eqref{e:correct}. 


\section{Preliminaries and statement of the result}
\label{s:prel}

\subsection{Notation}
Throughout this paper our notation will be consistent with
the one of \cite{CD}, explained in Section 2 of that paper.
For the reader's convenience we recall some of these
conventions in the following table.

\begin{tabular}{lll}
$T_xM$ && the tangent space of $M$ at $x$\\
$TM$ && the tangent bundle of $M$.\\
$\Inj(M)$ && the injectivity radius of $M$.\\
$\haus^2$ && the $2$--d Hausdorff 
measure in the metric space $(M,d)$.\\
$\haus^2_e$ && the $2$--d Hausdorff 
measure in the euclidean space $\rn{3}$.\\
$B_\rho (x)$ && open ball\\
$\ov{B}_\rho (x)$ && closed ball\\
$\partial B_\rho (x)$ && distance sphere of radius $\rho$ in $M$.\\
$\diam (G)$ && diameter of a subset $G\subset M$.\\
$d(G_1, G_2)$ && the Hausdorff distance between the subsets\\
&& $G_1$ and $G_2$ of $M$.\\
$\D$, $\D_\rho$ && the unit disk and the disk of radius $\rho$ in $\rn{2}$.\\
$\B$, $\B_\rho$ && the unit ball and the ball of radius $\rho$ in $\rn{3}$.\\
$\exp_x$ && the exponential map in $M$ at $x\in M$.\\
$\Is (U)$ && smooth isotopies which leave 
$M\setminus U$ fixed.\\
$G^2(U)$, $G(U)$ && grassmannian of (unoriented) 
$2$--planes on $U\subset M$.\\
$\an (x,\tau, t)$ && the open annulus 
$B_t (x)\setminus \overline{B}_\tau (x)$.\\
$\An_r (x)$ && the set 
$\{\an (x, \tau, t) \mbox{ where $0<\tau<t<r$}\}$.\\
$C^\infty (X,Y)$ && smooth maps from $X$ to $Y$.\\
$C^\infty_c (X,Y)$ && smooth maps with compact support
from $X$\\
&& to the vector space $Y$.
\end{tabular}

\subsection{Varifolds} We will need to recall some basic facts from the 
theory of varifolds; see for instance  
chapter 4 and chapter 8 of \cite{Si} for further information.
Varifolds are a convenient way of generalizing surfaces to a category that 
has good compactness properties. An advantage of varifolds, over other
generalizations (like currents), is that they do not allow for
cancellation of mass. This last property is fundamental for the
min--max construction.

If $U$ is an open 
subset of $M$, any finite nonnegative measure on the Grassmannian of 
unoriented $2$--planes on $U$ is said to be a {\em $2$--varifold in $U$}. 
The Grassmannian 
of $2$--planes will be denoted by $G^2(U)$ and the vector space of 
$2$--varifolds is denoted by $\mathcal{V}^2 (U)$.  
Throughout we will consider only $2$--varifolds; thus we drop the 2.

We endow $\mathcal{V} (U)$ with the topology of
the weak convergence in the sense of
measures, thus we say that a sequence $V^k$ of varifolds converge to
a varifold $V$ if for every function $\varphi\in C_c (G(U))$ 
$$
\lim_{k\to \infty} \int \varphi (x, \pi)\, dV^k (x, \pi)
\;=\; \int \varphi (x, \pi)\, dV (x, \pi)\, .
$$
Here $\pi$ denotes a $2$--plane of $T_x M$.
If $U'\subset U$ and $V\in \mathcal{V} (U)$, then we denote by 
$V\res U'$ the restriction of the measure $V$ to $G (U')$. Moreover, 
$\|V\|$ will be the unique measure on $U$ satisfying
$$
\int_U \varphi (x) \,d\|V\| (x)\;=\;
\int_{G(U)} \varphi (x) \,dV (x, \pi)\qquad \forall \varphi\in C_c
(U)\, .
$$
The support of $\| V\|$, denoted by $\supp (\|V\|)$, is the 
smallest closed set outside
which $\|V\|$ vanishes identically.
The number $\|V\|(U)$ will be
called the {\em mass of $V$ in $U$}. When $U$ is clear from the context, 
we say briefly the {\em mass of $V$}.

Recall also that a $2$--dimensional rectifiable set is a countable union
of closed subsets of $C^1$ surfaces (modulo sets of $\haus^2$--measure 0).
Thus, if $R\subset U$ is a $2$--dimensional rectifiable set 
and $h:R\to \rn{+}$ is a Borel function, then we can define a 
varifold $V$ by 
\begin{equation}\label{e:defvar}
\int_{G (U)} \varphi (x, \pi) \,dV (x, \pi)=
\int_R h(x) \varphi (x, T_x R) \,d\haus^2 (x)\, \quad \forall
\varphi\in C_c (G (U))\, .
\end{equation}
Here $T_x R$ denotes the tangent plane to $R$ in $x$.
If $h$ is integer--valued, then we say that $V$ is an 
{\em integer rectifiable varifold}.
If $\Sigma=\bigcup n_i \Sigma_i$, then 
by slight abuse of notation we use $\Sigma$ for the 
varifold induced by $\Sigma$ via \eqref{e:defvar}.

\subsection{Pushforward, first variation, monotonicity formula}
If $V$ is a varifold induced by a surface 
$\Sigma\subset U$ and $\psi:U\to U'$ a diffeomorphism, 
then we let $\psi_\# V\in \mathcal{V} (U')$ 
be the varifold induced by the 
surface $\psi (\Sigma)$. The definition of $\psi_\# V$ can be naturally 
extended to {\em any} $V\in \mathcal{V} (U)$ by
$$
\int \varphi(y, \sigma)\, d(\psi_\# V) (y, \sigma)
\;=\; \int J \psi (x, \pi)\, \varphi 
(\psi (x), d\psi_x (\pi))\, dV (x, \pi)\, ;
$$
where $J \psi (x, \pi)$ denotes the Jacobian determinant (i.e. the area
element) of the differential $d\psi_x$ restricted to the plane $\pi$;
cf. equation (39.1) of \cite{Si}.

Given a smooth vector field $\chi$, let $\psi$ be the isotopy
generated by $\chi$, i.e. with ${\textstyle \frac{\partial
    \psi}{\partial t} = \chi( \psi)}$. The   
first variation of $V$ with respect to $\chi$ is
defined as
$$
[\delta V] (\chi) \;=\; \left. \frac{d}{dt} (\|\psi (t, \cdot)_\# V\|)
\right|_{t=0}\, ;
$$
cf. sections 16 and 39 of \cite{Si}. When $\Sigma$ is a smooth surface 
we recover the classical definition of first variation of a
surface:
$$
[\delta \Sigma] (\chi) \;=\; \int_\Sigma {\rm div}_{\Sigma} \chi\, d\haus^2 
\;=\; \left. \frac{d}{dt} (\haus^2 (\psi(t,\Sigma)))\right|_{t=0}\, .
$$
If $[\delta V] (\chi)=0$ for every $\chi\in C^\infty_c (U,TU)$, then $V$ 
is said to be {\em stationary in $U$}. Thus stationary varifolds are 
natural generalizations of minimal surfaces.

Stationary varifolds in Euclidean spaces satisfy the monotonicity
formula (see sections 17 and 40 of \cite{Si}):
\begin{equation}\label{e:MonFor1}
\mbox{For every $x$ the function } 
f(\rho)= \frac{\|V\| (B_\rho (x))}{\pi \rho^2}
\mbox{ is non--decreasing.} 
\end{equation}
When $V$ is a stationary varifold in a Riemannian manifold a similar
formula with an error term holds. Namely, there exists a constant
$C (r)\geq 1$ such that
 \begin{equation}\label{e:MonFor}
f(s)\;\leq\; C(r) f(\rho) \qquad \mbox{whenever $0<s<\rho<r$.}
\end{equation}
Moreover, the constant $C(r)$ approaches $1$ as 
$r\downarrow 0$. This property allows us to define the
{\em density} of a stationary varifold $V$ at $x$, by
$$
\theta (x, V)\;=\; \lim_{r\downarrow 0} \frac{\|V\| (B_r (x))}{\pi r^2}.
$$  
Thus $\theta (x, V)$ corresponds to the upper
density $\theta^{*2}$ of the measure $\|V\|$ as defined in section 3
of \cite{Si}.
 
\subsection{Curvature estimates for stable minimal surfaces}
\label{ss:curvest}
In many of the proofs we will use
Schoen's curvature estimate (see \cite{Sc}) 
for stable minimal surfaces.  Recall that this estimate asserts that,
if $U\subset\subset M$, then there exists a universal
  constant, $C(U)$, such that for every stable minimal surface 
$\Sigma\subset U$ 
with $\partial \Sigma\subset \partial U$ and second fundamental form $A$ 
\begin{equation}\label{e:curva1}
|A|^2 (x) \; \leq\; \frac{C(U)}{d^2 (x, \partial U)}\, \qquad \forall x\in
\Sigma\, .
\end{equation}
In fact, what we will use is not the actual curvature estimate, rather 
it is the following consequence of it:
\begin{eqnarray}
&\mbox{If $\{\Sigma^n\}$ is a sequence of stable
minimal surfaces in $U$, then a}&\nonumber\\
&\mbox{subsequence converges 
to a stable minimal surface $\Sigma^\infty$}\, .&\label{e:curvaclaim}
\end{eqnarray}

\subsection{Almost minimizing min--max sequences}
Next, we assume that $\Lambda$ is a fixed saturated set
and we begin by recalling the building blocks of the proof of
Theorem \ref{t:SS}. First of all, in \cite{CD}, following ideas
of Pitts and Almgren (see \cite{P} and \cite{Alm}), 
the authors reported a proof of the following proposition
(cp. with Proposition 3.1 in \cite{CD}).

\begin{propos}\label{p:goodbis} There exists a minimizing sequence 
$\{\{\Sigma_t\}^n\}\subset \Lambda$ 
such that\ every min--max sequence $\{\Sigma^n_{t_n}\}$ 
clusters to stationary 
varifolds.
\end{propos}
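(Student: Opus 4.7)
\medskip

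\noindent\textbf{Proof plan (pull--tight argument).}
The plan is to implement a version of the Almgren--Pitts ``pull--tight'' procedure, producing from an arbitrary minimizing sequence in $\Lambda$ a new minimizing sequence along which every near--maximal slice is forced, by a continuous isotopic deformation, to lie in any prescribed neighborhood of the set of stationary varifolds.

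First I would fix notation: let $\V_\infty\subset \mathcal{V}(M)$ be the compact (in the varifold weak topology, which is metrizable on bounded mass) set of varifolds with $\|V\|(M)\leq m_0(\Lambda)+1$, and let $\V_\infty^s\subset \V_\infty$ denote the closed subset of stationary varifolds. For each $V\in \V_\infty\setminus \V_\infty^s$ there exists $\chi_V\in C^\infty_c(M,TM)$ with $\|\chi_V\|_{C^1}\leq 1$ and $[\delta V](\chi_V)\leq -c(V)<0$. Since $V\mapsto [\delta V](\chi)$ is continuous for fixed $\chi$, the inequality $[\delta W](\chi_V)\leq -c(V)/2$ persists for $W$ in a weak--$*$ neighborhood $U_V$ of $V$. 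Using compactness of $\V_\infty\setminus\{W:\mathrm{dist}(W,\V_\infty^s)<\varepsilon\}$ for each $\varepsilon>0$, extract a finite cover $\{U_{V_i}\}$ and glue the $\chi_{V_i}$ via a weak--$*$ continuous partition of unity to build a continuous map $H_\varepsilon:\V_\infty\to C^\infty_c(M,TM)$ such that $H_\varepsilon(V)=0$ when $V\in \V_\infty^s$ and $[\delta V](H_\varepsilon(V))\leq -\alpha(\varepsilon)<0$ whenever $\mathrm{dist}(V,\V_\infty^s)\geq \varepsilon$.

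Next I would integrate $H_\varepsilon$. For each $V$, let $\Phi^V\in \Is$ be the flow of $H_\varepsilon(V)$, run for a uniform time $\tau=\tau(\varepsilon)$, small enough that the first--order calculation $\|(\Phi^V_s)_\# V\|(M)\leq \|V\|(M)-\tfrac{s\alpha(\varepsilon)}{2}$ holds for $s\in[0,\tau]$ uniformly in $V$ (this uses continuity of $V\mapsto H_\varepsilon(V)$ and a second--order bound on the flow). Given a family $\{\Sigma_t\}\in\Lambda$, I would pull--tight slice by slice: choose a continuous cutoff $\eta:[0,1]\to[0,1]$ supported where $\haus^2(\Sigma_t)>\F(\{\Sigma_t\})-\delta$, and set $\Sigma'_t:=\Phi^{\Sigma_t}_{\eta(t)\tau}(\Sigma_t)$. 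Continuity of $t\mapsto \Sigma_t$ in the varifold sense on the support of $\eta$ (which follows from (c1)--(c2) in Definition \ref{d:gensur}) makes $\psi(t,x):=\Phi^{\Sigma_t}_{\eta(t)\tau}(x)$ a smooth map in $C^\infty([0,1]\times M,M)$ with $\psi(t,\cdot)\in\diff$, so by the procedure on page \pageref{i:proc} the new family $\{\Sigma'_t\}$ still lies in $\Lambda$.

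With these two ingredients I would conclude by a two--layer diagonal argument. Start from any minimizing sequence $\{\Sigma_t\}^n$ with $\F(\{\Sigma_t\}^n)\leq m_0+1/n$. Apply the pull--tight with parameter $\varepsilon_n\downarrow 0$ and $\delta_n\downarrow 0$, producing $\{\Sigma'_t\}^n\in\Lambda$ still satisfying $\F(\{\Sigma'_t\}^n)\leq \F(\{\Sigma_t\}^n)\leq m_0+1/n$, so $\{\Sigma'_t\}^n$ is still minimizing. By the mass--decrease inequality, any slice $\Sigma'^n_{t_n}$ with $\haus^2(\Sigma'^n_{t_n})\geq m_0-\delta_n/2$ (i.e.\ any min--max slice for $n$ large) must satisfy $\mathrm{dist}(\Sigma^n_{t_n},\V_\infty^s)<\varepsilon_n$ — otherwise pulling tight would have decreased its area by at least $\tau\alpha(\varepsilon_n)/2$, contradicting $\F(\{\Sigma'_t\}^n)\geq m_0$. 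Hence every min--max sequence extracted from $\{\Sigma'_t\}^n$ clusters to $\V_\infty^s$.

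The principal difficulty will be the glueing/continuity step: producing a \emph{continuous} (weak--$*$) selection $V\mapsto \chi_V$ on an infinite--dimensional space of varifolds, and then verifying that the composition $t\mapsto \Phi^{\Sigma_t}_{\eta(t)\tau}$ is smooth enough in $(t,x)$ to be admissible as an element of the saturation, even across the exceptional times $T$ where $\Sigma_t$ drops points. The admissibility issue is handled by choosing the vector fields $\chi_{V_i}$ so that they vanish identically near the exceptional points $P$ from Remark \ref{r:P}, which is possible because the uniform bound $N(\Lambda)$ on $|P|$ lets us restrict attention to a compact family of ``puncture loci'' and shrink the supports accordingly.
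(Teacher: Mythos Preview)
The paper does not supply its own proof here; the proposition is quoted from \cite{CD} (Proposition~3.1 there), and the argument in \cite{CD} is the Almgren--Pitts pull--tight that you sketch. At the level of strategy your plan is correct and matches the cited source.

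Two technical steps, however, are not justified. First, you claim that (c1)--(c2) in Definition~\ref{d:gensur} force continuity of $t\mapsto\Sigma_t$ in the weak--$*$ varifold topology; but Hausdorff convergence of supports together with continuity of total area does not by itself control the Grassmannian component, so this implication requires an argument you have not given. One can sidestep the issue by running the partition of unity on $[0,1]$ rather than on $\V_\infty$: for each \emph{fixed} diffeomorphism $\Phi$ the family $\{\Phi(\Sigma_t)\}_t$ lies again in $\Lambda$ and hence $t\mapsto\haus^2(\Phi(\Sigma_t))$ is continuous by (c1), which is already enough to propagate a definite area decrease to nearby parameters. Second, even granting varifold continuity, your $\psi(t,x)=\Phi^{\Sigma_t}_{\eta(t)\tau}(x)$ is the flow of $H_\varepsilon(\Sigma_t)$, and $t\mapsto H_\varepsilon(\Sigma_t)$ is only continuous; the saturation procedure on page~\pageref{i:proc} demands $\psi\in C^\infty([0,1]\times M,M)$, and you indicate no smoothing step. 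Your closing paragraph misdiagnoses the difficulty: making the $\chi_{V_i}$ vanish near $P$ is unnecessary (any $\psi(t,\cdot)\in\diff$ carries the finite set $P$ to a finite set of the same cardinality, so Definition~\ref{d:gensur} and the uniform bound of Remark~\ref{r:P} are preserved regardless) and in any case does nothing for $t$--regularity, which is the actual obstruction to admissibility.
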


It is well--known that stationary varifolds are not, 
in general, 
smooth minimal surfaces. 
The regularity theory of Theorem \ref{t:SS}
relies on the definition of almost minimizing sequence, a concept
introduced by Pitts in \cite{P} and based on ideas of Almgren
(see \cite{Alm}). Roughly speaking
a surface $\Sigma$ is almost minimizing if 
any path of surfaces $\{\Sigma_t\}_{t\in
  [0,1]}$ starting at $\Sigma$ and such 
that $\Sigma_1$ has small area 
(compared to $\Sigma$) must necessarily 
pass through a surface with large area.
Our actual definition, following Smith and Simon,
is in fact more restrictive: we will require
the property above only for families $\{\Sigma_t\}$ given by 
smooth isotopies. 

\begin{definition}\label{AM1} Given $\eps>0$, 
  an open set $U\subset M^3$, and a
  surface $\Sigma$, we say that $\Sigma$ is {\em $\eps$--a.m.\ in $U$}\/ if
  there {\sc does not} exist any isotopy $\psi$ supported in $U$ such that\
  \begin{eqnarray}
  &&\mbox{$\haus^2 (\psi (t,\Sigma))\leq \haus^2 (\Sigma)+\eps/8$ for all
  $t$;}\label{AM(a)}\\ 
  &&\mbox{$\haus^2 (\psi (1,\Sigma))\leq \haus^2 (\Sigma)-\eps$.}\label{AM(b)}
  \end{eqnarray}
\end{definition}

Using a combinatorial argument due to Almgren 
and exploited by Pitts in \cite{P}, 
the second step of \cite{CD} was to show
Proposition \ref{p:existbis} below.

\begin{remark}
In fact, the statement of Proposition
\ref{p:existbis} does not coincide exactly with 
the corresponding Proposition 5.1 of \cite{CD}. However, it is easy
to see that Proposition 5.3 of \cite{CD} yields
the slightly small precise statement given below.
\end{remark}

\begin{propos}\label{p:existbis}
There exists a function $r:M\to \rn{+}$ and a min--max sequence
$\Sigma^j=\Sigma^j_{t_j}$ such that:
\begin{itemize}
\item in every annulus 
$\an$ centered at $x$ and with outer radius at most $r(x)$, 
$\Sigma^j$ is $1/j$--a.m. provided $j$ is large enough;
\item In any such annulus, $\Sigma^j$ is smooth when $j$ 
is sufficiently large;
\item $\Sigma^j$ converges to a stationary varifold $V$ in $M$, 
as $j\uparrow \infty$.
\end{itemize}
\end{propos}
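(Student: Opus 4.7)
\textbf{Proof plan for Proposition \ref{p:existbis}.} I would start from the minimizing sequence $\{\{\Sigma_t\}^n\}\subset\Lambda$ provided by Proposition \ref{p:goodbis}, every min--max sequence of which clusters to a stationary varifold. The goal is to select, out of the double sequence of slices, a min--max sequence $\Sigma^j=\Sigma^j_{t_j}$ which in addition enjoys a local almost minimizing property on a rich enough family of annuli. Since almost minimality is automatically required only at one single scale per point, the combinatorial heart of the argument is to guarantee that one can keep this property on annuli of arbitrarily small radius, suitably chosen at every point of $M$.

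The central step is the Almgren--Pitts combinatorial lemma, reported in Proposition 5.3 of \cite{CD}: assume, towards a contradiction, that no min--max sequence extracted from $\{\{\Sigma_t\}^n\}$ can be made $1/j$-a.m.\ on annuli of all sufficiently small outer radii around some point. Then for infinitely many $j$ and every min--max candidate slice, one can find, at every point $x\in M$, an annulus centered at $x$ supporting an isotopy in $\Is(\an)$ which lowers the area by at least $1/j$ while never increasing it by more than $1/(8j)$. Using a finite cover of $M$ by pairs of disjoint annuli (exploiting that in $\mathbb{R}^3$ one can always separate any four annuli into two pairs of disjoint ones) one composes these isotopies to get a \emph{uniform} area reduction on the maximal slice of a nearby sweepout, contradicting that $\{\{\Sigma_t\}^n\}$ is minimizing in the saturated $\Lambda$. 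This yields the function $r(x)$ and the min--max sequence $\Sigma^j$ satisfying the first bullet, namely that $\Sigma^j$ is $1/j$-a.m.\ in every annulus centered at $x$ of outer radius at most $r(x)$, for all $j$ large.

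For the second bullet I would use the $\epsilon$-almost minimizing property itself, combined with a replacement argument: given $\an\in\An_{r(x)}(x)$, one may replace $\Sigma^j\cap\an$ on a slightly smaller subannulus by a minimizer among isotopic comparison surfaces with the appropriate boundary. By the a.m.\ property with $\epsilon=1/j$, the replacement is close in area to $\Sigma^j$; by Meeks--Simon--Yau (in the version stated as Proposition \ref{p:MSYbis} later in the paper) the minimizer is smooth and stable, and Schoen's estimate \eqref{e:curva1} together with \eqref{e:curvaclaim} produces a smooth stable minimal limit. Iterating this inside $\an$ and letting $j\to\infty$ shows that, on any compact subannulus and for $j$ large, $\Sigma^j$ is already close in varifold sense to a smooth stable minimal surface, hence smooth, avoiding the finitely many bad points of $P\cup\{$parameters in $T\}$ by shrinking $r(x)$. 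Finally the third bullet — varifold convergence to a stationary $V$ in $M$ — is immediate from Proposition \ref{p:goodbis} after passing to a subsequence.

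The main obstacle is clearly the combinatorial step: one has to keep simultaneous control of the area--reducing isotopies at \emph{every} point of $M$ and at \emph{every} parameter $t$ along the sweepout, while making sure that the composed deformation stays inside $\Lambda$ (hence the importance of $\Lambda$ being saturated and of the bound $N(\Lambda)$ on the number of exceptional points in Remark \ref{r:P}). The delicate bookkeeping — pairing disjoint annuli, matching the scales $\eps=1/j$ on the reduction side against $\eps/8$ on the increase side, and producing a sweepout whose \emph{maximal} slice strictly beats $m_0(\Lambda)$ — is the technical core; once this is in hand, regularity in annuli is a routine consequence of stability and the curvature estimates recalled in Section \ref{ss:curvest}.
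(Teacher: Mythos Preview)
The paper does not give its own proof of this proposition; it simply cites \cite{CD} (Propositions~5.1 and~5.3 there), pointing to the Almgren--Pitts combinatorial argument. Your sketch of the first and third bullets is in line with that citation: the combinatorial contradiction to produce $r$ and the a.m.\ sequence, and Proposition~\ref{p:goodbis} for stationarity of the limit.

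Your argument for the second bullet, however, is misdirected. The claim is that $\Sigma^j$ \emph{itself} is a smooth submanifold in the annulus for $j$ large --- not that the varifold limit is smooth, and not that $\Sigma^j$ is varifold-close to something smooth. The replacement procedure, Meeks--Simon--Yau, and Schoen's curvature estimates are the machinery behind Theorem~\ref{t:regularity} (regularity of the \emph{limit} $V$); they say nothing about regularity of the approximating surfaces $\Sigma^j$. In particular, the step ``close in varifold sense to a smooth stable minimal surface, hence smooth'' is simply false: varifold convergence does not propagate smoothness backwards to the sequence.

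The correct reason is elementary and structural. Each $\Sigma^j=\Sigma^j_{t_j}$ is a slice of a generalized family and hence, by Definition~\ref{d:gensur}, a smooth surface in $M\setminus P_j$ with $P_j$ finite of cardinality at most $N(\Lambda)$ (Remark~\ref{r:P}). Passing to a subsequence so that $P_j\to P$ in the Hausdorff distance with $P$ finite, one shrinks $r(x)$ so that $B_{r(x)}(x)\cap P\subset\{x\}$. Any fixed annulus in $\An_{r(x)}(x)$ is then at positive distance from $P$, so $P_j\cap\an=\emptyset$ for $j$ large and $\Sigma^j$ is smooth there. You do allude to this at the very end of your paragraph (``avoiding the finitely many bad points\ldots by shrinking $r(x)$''), but that is the entire argument, not an addendum to the variational one.
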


The following Theorem completed the proof of Theorem \ref{t:SS}
(cp. with Theorem 7.1 in \cite{CD}).

\begin{theorem}\label{t:regularity}
Let $\{\Sigma^j\}$ be a sequence of surfaces in
$M$ and assume the existence of a function $r:M\to \rn{+}$ 
such that
the conclusions of Proposition \ref{p:existbis} hold. 
Then $V$ is a smooth minimal surface. 
\end{theorem}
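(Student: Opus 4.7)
The plan is to deduce smoothness of $V$ from a \emph{replacement} argument of Pitts--Schoen--Simon type. Since $V$ is already known to be stationary, it suffices to prove that, in a neighborhood of each point $x\in M$, the varifold $V$ is a smooth embedded minimal surface. The strategy is: for every sufficiently small annulus $\an$ centered at $x$, construct a stationary integer rectifiable varifold $V^*$ (a ``replacement'') which coincides with $V$ outside $\an$ and whose support inside $\an$ is a smooth stable embedded minimal surface. Availability of such replacements at arbitrarily small scales, combined with Allard's regularity theorem, forces $V$ itself to be a classical minimal surface in a neighborhood of $x$, and hence globally.

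To construct the replacement in $\an=\an(x,\tau,t)$ with $t<r(x)$, I would fix a slightly smaller compact subannulus $K\subset\subset \an$ and, for each large $j$, run a minimization among surfaces isotopic to $\Sigma^j$ via isotopies supported in $K$. The $1/j$--a.m.\ condition prevents large transient area increases along any such isotopy, which is precisely the hypothesis needed to apply a Meeks--Simon--Yau style result for minimizing sequences of isotopic surfaces with boundary (this is the role of the later Proposition \ref{p:MSYbis} in the present paper). The output is a smooth embedded stable minimal surface $\tilde\Sigma^j$ in the interior of $K$, agreeing with $\Sigma^j$ outside. Applying Schoen's curvature estimate in the form of the compactness statement \eqref{e:curvaclaim}, $\tilde\Sigma^j$ has a subsequence converging to a stable minimal surface $\Sigma^\infty$ inside $K$. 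The $\eps$--a.m.\ comparison together with lower semicontinuity of mass shows that no mass is lost or gained in the process, so the limiting varifold $V^*$ coincides with $V$ on $M\setminus K$, has classical regular support inside $K$, and is globally stationary.

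With the replacement property in every small annulus at every point, the remaining reasoning is standard Pitts--style analysis: (i) the monotonicity formula and iterability of replacements imply that every tangent varifold $C$ to $V$ at $x$ itself admits replacements in annuli around the vertex; (ii) by performing two replacements in overlapping annuli and invoking uniqueness of tangent planes for stable minimal surfaces, one shows that $C$ is a finite union of $2$--planes through $0$ with integer multiplicities, i.e.\ $V$ has planar tangents of integer density everywhere; (iii) Allard's regularity theorem then yields that $V$ is, near $x$, a smooth embedded minimal surface (possibly with multiplicity). The principal obstacle lies in producing $\tilde\Sigma^j$ as a smooth stable minimal surface \emph{up to the boundary} of the minimization region $K$: one needs a convex--hull statement to set up Allard's boundary regularity theorem, a $\gamma$--reduction argument in the spirit of \cite{MSY} adapted to isotopies that are pinned on $\partial K$, and a careful control of connected components in the limit $j\to\infty$. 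These are the technical points that the paper handles in Sections \ref{s:MSY1}--\ref{s:MSY5}; once Proposition \ref{p:MSYbis} is in hand, the iteration of replacements and the tangent-cone analysis proceed as in the classical scheme.
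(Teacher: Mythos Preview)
Your overall strategy --- construct replacements in small annuli using the a.m.\ property, pass to smooth stable minimal surfaces via Schoen's curvature estimates, iterate replacements in overlapping annuli, analyze tangent cones, then invoke Allard --- is correct and is precisely the scheme carried out in \cite{CD}. Note, however, that the present paper does \emph{not} prove Theorem \ref{t:regularity}: it simply quotes it as Theorem 7.1 of \cite{CD}.

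There is a misattribution in your last paragraph. The boundary regularity of the replacement (convex-hull property, Allard boundary theorem) and the full strength of Proposition \ref{p:MSYbis} developed in Sections \ref{s:MSY1}--\ref{s:MSY5} are \emph{not} used in the proof of Theorem \ref{t:regularity}. For the regularity theorem only the \emph{interior} smoothness of the replacement is needed; this is Proposition \ref{MSYinterno}, already stated in \cite{CD} as Theorem 7.3 there. Replacements in overlapping annuli are glued by unique continuation in the interior, so regularity of $\tilde\Sigma^j$ up to $\partial K$ never enters the argument. The boundary regularity and the connected-component statement of Proposition \ref{p:MSYbis} are instead new technical ingredients needed for Simon's Lifting Lemma (Proposition \ref{p:lifting}), i.e.\ for the \emph{genus bounds}, which are the actual contribution of this paper. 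So while your outline of the regularity proof is fine, your identification of ``the principal obstacle'' and of the role of Sections \ref{s:MSY1}--\ref{s:MSY5} conflates the already-known regularity theory with the new work required for Theorem \ref{t:main2}.
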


The proof of this Theorem draws heavily on a fundamental result
of Meeks, Simon and Yau (\cite{MSY}). A suitable version of
it plays a fundamental role also in this paper and since 
the modifications of the ideas of \cite{MSY}
needed in our case are complicated, we will discuss them
later in detail. From now on, in order to simplify our notation,
a sequence $\{\Sigma^j\}$ satisfying the conclusions of 
Proposition \ref{p:existbis}
will be simply called {\em 
almost minimizing in sufficiently small annuli}.

\subsection{Statement of the result}
Our genus estimate is valid, in general, for limits
of sequences of surfaces which are almost minimizing in 
sufficiently small annuli.

\begin{theorem}\label{t:main2} Let $\Sigma^j = \Sigma^j_{t_j}$
be a sequence which is a.m. in sufficiently small annuli. 
Let $V = \sum_i n_i \Gamma^i$ be the varifold limit of 
$\{\Sigma^j\}$, 
where $\Gamma^i$ are as in Theorem \ref{t:main}. Then
\begin{equation}\label{e:claim2}
\sum_{\Gamma^i\in \mathcal{O}}\gen(\Gamma^i)
+ \frac{1}{2} \sum_{\Gamma^i\in \mathcal{N}} (\gen (\Gamma^i)-1)
\;\leq\; \liminf_{j\uparrow \infty}
\liminf_{\tau \to t_j} \gen (\Sigma^j_{\tau})\, .
\end{equation}
\end{theorem}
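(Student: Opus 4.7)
The plan is to reduce Theorem \ref{t:main2} to Simon's Lifting Lemma (Proposition \ref{p:lifting}) and then obtain the genus inequality by counting linearly independent cycles in $\Sigma^j$. This follows the strategy sketched in the introduction.

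First I would establish the local structure of the convergence $\Sigma^j \to V$. By Theorem \ref{t:regularity}, $V = \sum_i n_i \Gamma^i$ is a smooth embedded minimal surface; combining the almost-minimizing hypothesis with Schoen's curvature estimate (Subsection \ref{ss:curvest}) produces a finite set $Y \subset M$ outside of which $\Sigma^j \to V$ in $C^\infty$, so that in a neighborhood of any $x \in \Gamma^i \setminus Y$ the surface $\Sigma^j$ is, for $j$ large, an $n_i$-sheeted normal graph over $\Gamma^i$. By Remark \ref{r:epsilonj}, after passing to a subsequence and replacing $t_j$ by $\tau_j \to t_j$, one may assume each $\Sigma^j$ is a smooth closed surface; since $M$ is orientable and $\Sigma^j$ is embedded, $\Sigma^j$ is orientable with $\mathrm{rank}\, H_1(\Sigma^j;\mathbb{Z}) = 2\gen(\Sigma^j)$.

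For each component $\Gamma^i$ I would fix a system of simple closed curves whose homology classes form a basis of $H_1(\Gamma^i;\mathbb{Z})$ modulo torsion, of cardinality $2\gen(\Gamma^i)$ when $\Gamma^i \in \mathcal{O}$ and $\gen(\Gamma^i) - 1$ when $\Gamma^i \in \mathcal{N}$. A generic perturbation makes these curves pairwise transverse and disjoint from $Y$. Simon's Lifting Lemma then provides, for each such $\gamma$ and each large $j$, a closed curve $\tilde\gamma^j \subset \Sigma^j$ which is homologically essential in $\Sigma^j$ and projects (in the appropriate limiting sense) to $\gamma$; its crucial output is that these lifts, as $\gamma$ ranges over the chosen bases of all the $\Gamma^i$, remain linearly independent in $H_1(\Sigma^j;\mathbb{Z})$. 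Consequently $\sum_{\mathcal{O}} 2\gen(\Gamma^i) + \sum_{\mathcal{N}}(\gen(\Gamma^i)-1) \leq 2\gen(\Sigma^j)$, and dividing by two and taking $\liminf_j$ yields \eqref{e:claim2}.

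The main obstacle, and where essentially the entire technical content of the paper lies, is Simon's Lifting Lemma itself. A naive lift of a curve in $\Gamma^i$ to $\Sigma^j$ is only an arc, and even after closing up may bound in $\Sigma^j$. To forbid this one needs the variational input of almost-minimality, processed through the modified Meeks--Simon--Yau theorem (Proposition \ref{p:MSYbis}): were the lifted cycle nullhomologous in $\Sigma^j$, an area-decreasing isotopy, essentially capping it off inside a small tubular neighborhood of the lift, could be constructed, contradicting almost-minimality in a suitable annulus. By comparison the homological bookkeeping above is routine once the lifting lemma is in hand.
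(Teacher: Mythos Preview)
Your overall strategy---reduce to Simon's Lifting Lemma and count independent homology classes---matches the paper's. But there are two genuine gaps.

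First, the claim that Schoen's curvature estimates yield $C^\infty$ convergence of $\Sigma^j$ to $V$ outside a finite set is incorrect. Schoen's estimates apply to \emph{stable minimal} surfaces; the $\Sigma^j$ are neither minimal nor stable, and varifold convergence to a smooth limit does not upgrade to graphical convergence here. If it did, the Lifting Lemma would be a triviality. The paper never claims such convergence for the $\Sigma^j$ themselves; instead, inside small balls it \emph{replaces} $\Sigma^j$ by minimizers via Proposition~\ref{p:MSYbis} and applies the curvature estimates to those replacements. This is exactly the content of Section~\ref{s:lifting}, and it is why Proposition~\ref{p:MSYbis} (with its statement about limits of connected components) is needed.

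Second, and more seriously, you assert that the lifts $\tilde\gamma^{i,l}$ are linearly independent in $H_1(\Sigma^j;\mathbb Z)$ as a direct output of the Lifting Lemma. But Proposition~\ref{p:lifting} only says that $\tilde\gamma^{i,l}$ is homotopic to $n_{i,l}\,\gamma^{i,l}$ \emph{in the tubular neighborhood} $T_\eps\Gamma^i$. This gives independence of the lifts in $H_1(T_\eps\Gamma^i)$, but $\Sigma^j$ may extend far outside $T_\eps\Gamma$, and a lift could bound a chain in $\Sigma^j$ that leaves the neighborhood. The paper closes this gap with Proposition~\ref{p:intorno}: one performs surgery on $\Sigma^j_t$ (for $t$ near $t_j$) to obtain a closed surface $\tilde\Sigma^j_t\subset T_{2\eps}\Gamma$ which coincides with $\Sigma^j_t$ in $T_\eps\Gamma$ and has no larger genus. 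The lifts then sit on the closed orientable surface $\tilde\Sigma^j_t\cap T_{2\eps}\Gamma^i$, and their independence in $H_1(T_{2\eps}\Gamma^i)$ forces independence in $H_1(\tilde\Sigma^j_t\cap T_{2\eps}\Gamma^i)$, whence the genus bound for $\tilde\Sigma^j_t$ and hence for $\Sigma^j_t$. Without this surgery step the passage from ``independent in the ambient tubular neighborhood'' to ``independent in $H_1(\Sigma^j)$'' is unjustified.
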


\section{Overview of the proof}\label{s:overview}

In this section we give an overview of the proof of
Theorem \ref{t:main2}. Therefore we fix a min--max sequence
$\Sigma^j = \Sigma^j_{t_j}$ as in Theorem \ref{t:main2}
and we let $\sum_i n_i \Gamma^i$ be its varifold limit. 
Consider the smooth surface
$\Gamma = \cup_i \Gamma^i$ and let $\eps_0>0$ be so small
that there exists a smooth retraction
of the tubular neighborhood $T_{2\eps_0} \Gamma$ onto $\Gamma$.
This means that, for every $\delta<2\eps_0$,
\begin{itemize}
\item $T_{\delta} \Gamma^i$
are smooth open sets with pairwise disjoint closures;
\item if $\Gamma^i$ is orientable, then $T_\delta
\Gamma^i$ is diffeomorphic to $\Gamma^i\times ]- 1,1[$; 
\item if $\Gamma^i$ is non--orientable,
then the boundary of $T_\delta \Gamma^i$ is 
an orientable double cover of $\Gamma^i$. 
\end{itemize}

\subsection{Simon's Lifting Lemma}
The following Proposition
is the core of the genus bounds. Similar
statements have been already used in the literature
(see for instance \cite{GJ} and \cite{FH}). We recall that 
the surface $\Sigma^j$ might not be everywhere
regular, and we denote by $P_j$ its set
of singular points (possibly empty).

\begin{propos}[Simon's Lifting Lemma]\label{p:lifting}
Let $\gamma$ be a closed simple curve on $\Gamma^i$ and let
$\eps\leq \eps_0$ be positive. Then, for $j$ large enough, there
is a positive $n\leq n_i$ and a closed curve 
$\tilde{\gamma}^j$ on $\Sigma^j\cap T_\eps
\Gamma^i\setminus P_j$ which is homotopic
to $n\gamma$ in $T_\eps \Gamma^i$.
\end{propos}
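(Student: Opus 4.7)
The plan is to exhibit $\tilde{\gamma}^j$ as a single connected component of the preimage of $\gamma$ under the nearest--point projection $\pi\colon T_\eps \Gamma^i \to \Gamma^i$, intersected with $\Sigma^j$. For this intersection to carry information, $\Sigma^j$ must be genuinely graphical over $\Gamma^i$ near $\gamma$, so the first and hardest step is to show that there is a finite subset $Y^i \subset \Gamma^i$ off of which $\Sigma^j \to n_i \Gamma^i$ \emph{smoothly}. This is the key technical input: it is obtained by combining the almost--minimizing property of Proposition \ref{p:existbis} with Schoen's curvature estimate \eqref{e:curva1} and, crucially, with the strengthened Meeks--Simon--Yau compactness of Proposition \ref{p:MSYbis}. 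As a consequence, for all large $j$ there is an open neighborhood $W^j$ of $\Gamma^i \setminus Y^i$ inside $T_\eps \Gamma^i$ such that $\Sigma^j \cap W^j$ is smooth and $\pi|_{\Sigma^j\cap W^j}\colon \Sigma^j \cap W^j \to \Gamma^i\setminus Y^i$ is a smooth $n_i$--sheeted covering (locally, a disjoint union of $n_i$ normal graphs converging to $\Gamma^i$).

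Since $Y^i$ is finite and $\gamma$ is one--dimensional, an arbitrarily small ambient isotopy of $\Gamma^i$ replaces $\gamma$ by a closed simple curve $\gamma' \subset \Gamma^i \setminus Y^i$ freely homotopic to $\gamma$. As $|P_j|\le N(\Lambda)$ by Remark \ref{r:P}, a further generic perturbation of $\gamma'$ within the same homotopy class ensures in addition that the $2$--dimensional surface $\pi^{-1}(\gamma')\cap T_\eps \Gamma^i$ is disjoint from $P_j$. Replacing $\gamma$ by $\gamma'$ affects neither hypothesis nor conclusion, so I denote the new curve again by $\gamma$. For $j$ large the set
\[
C^j \;:=\; \Sigma^j\cap \pi^{-1}(\gamma)
\]
is then contained in $W^j\setminus P_j$, is smooth, and equals the preimage of $\gamma$ under the $n_i$--sheeted cover $\pi|_{\Sigma^j\cap W^j}$. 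Hence $C^j$ decomposes as a disjoint union of smooth closed curves $c^j_1,\dots,c^j_{k_j}$, each $c^j_\ell$ being a connected cover of $\gamma$ of some degree $n^j_\ell\ge 1$, with $\sum_\ell n^j_\ell=n_i$.

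To conclude, set $\tilde{\gamma}^j:=c^j_1$ and $n:=n^j_1\in\{1,\dots,n_i\}$. Then $\tilde{\gamma}^j\subset \Sigma^j\cap T_\eps \Gamma^i\setminus P_j$; moreover, lying inside the (cylinder or M\"obius) bundle $\pi^{-1}(\gamma)\subset T_\eps \Gamma^i$, which deformation--retracts onto $\gamma$, the loop $\tilde{\gamma}^j$ is homotopic in $T_\eps \Gamma^i$ to the $n$--fold iterate of $\gamma$, as required. The main obstacle is the smooth--convergence statement used in the first step: only varifold convergence is available a priori from Theorem \ref{t:SS}, and Theorem \ref{t:regularity} yields only smoothness of the limit $V$, not of the convergence. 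Upgrading to smooth convergence off a finite set is precisely what Proposition \ref{p:MSYbis} and the Meeks--Simon--Yau--type analysis developed in Sections \ref{s:MSYbis}--\ref{s:MSY5} are designed to provide; once that is in place, the remainder of the argument reduces to an elementary covering--space computation.
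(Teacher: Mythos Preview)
Your approach has a genuine gap. The crucial first step---that $\Sigma^j$ converges \emph{smoothly} to $n_i\Gamma^i$ away from a finite set $Y^i$, so that $\pi|_{\Sigma^j}$ is an $n_i$--sheeted covering---is neither proved in the paper nor a consequence of Proposition~\ref{p:MSYbis}. That proposition concerns \emph{minimizing} sequences for Problem $(\Sigma,\Is_j(U,\Sigma))$: given a fixed $\Sigma^j$, one minimizes in a small ball to obtain a \emph{replacement} surface, and it is this replacement (a stable minimal surface) to which Schoen's estimate \eqref{e:curva1} applies and which converges smoothly to $n_i\Gamma^i$. The original $\Sigma^j$ is only $1/j$--almost minimizing; it is not minimal, not stable, and carries no a~priori curvature bound. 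Nothing rules out $\Sigma^j\cap B_\rho(x)$ having many small necks or folds of total area $o(1/j)$ that vanish in the varifold limit, so the normal projection $\pi|_{\Sigma^j}$ need not be a covering and $\pi^{-1}(\gamma)\cap\Sigma^j$ need not be a finite union of smooth closed curves covering $\gamma$.

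The paper's proof is organized precisely to avoid asserting smooth convergence of $\Sigma^j$ itself. It covers $\gamma$ by a chain of small balls $B_\rho(x_k)$ and, in each, defines \emph{leaves}: those connected components of $\Sigma^j\cap B_\rho(x_k)$ whose replacements (produced via Proposition~\ref{p:MSYbis}) retain a definite amount of area. The heart of the argument is the Continuation Lemma~\ref{l:cont}: every leaf in ball $k$ lies in the same connected component of $\Sigma^j\cap B^{k,k+1}$ as some leaf in ball $k+1$. This is proved by contradiction, using the second clause of Proposition~\ref{p:MSYbis} (limits of unions of connected components are unions of components of the limit) together with the area bound of Lemma~\ref{l:monot} to force an impossible half--integer multiplicity. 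The curve $\tilde\gamma^j$ is then assembled by joining points in successive leaves by arcs on $\Sigma^j$; since there are at most $n_i$ leaves per ball, pigeonhole closes the curve after at most $n_i$ laps around $\gamma$. Your covering--space shortcut would indeed work if the graphical structure of $\Sigma^j$ were available, but establishing that structure requires substantial additional work not supplied here and not contained in Sections~\ref{s:MSYbis}--\ref{s:MSY5}.
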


Simon's lifting Lemma implies directly the genus bounds
if we use the characterization of homology groups through 
integer rectifiable currents and some more geometric measure
theory. 
However, we choose to conclude the proof in a more elementary
way, using Proposition \ref{p:intorno} below. 

\subsection{Surgery}
The idea is that, for $j$ large enough, one can modify
any $\{\Sigma^j_t\}$ sufficiently close to $\Sigma^j =
\Sigma^j_{t_j}$ through surgery to a new surface
$\tilde{\Sigma}^j_t$ such that
\begin{itemize}
\item the new surface lies in a tubular neighborhood of $\Gamma$;
\item it coincides with the old surface in a yet smaller
tubular neighborhood.
\end{itemize}
 The surjeries that we will use in this paper are of two 
kind: we are allowed to
\begin{itemize}
\item remove a small cylinder and replace it
by two disks (as in Fig. \ref{f:cutting});
\item discard a connected component.
\end{itemize}
We give below the precise definition.

\begin{figure}[htbp]
\begin{center}
    \input{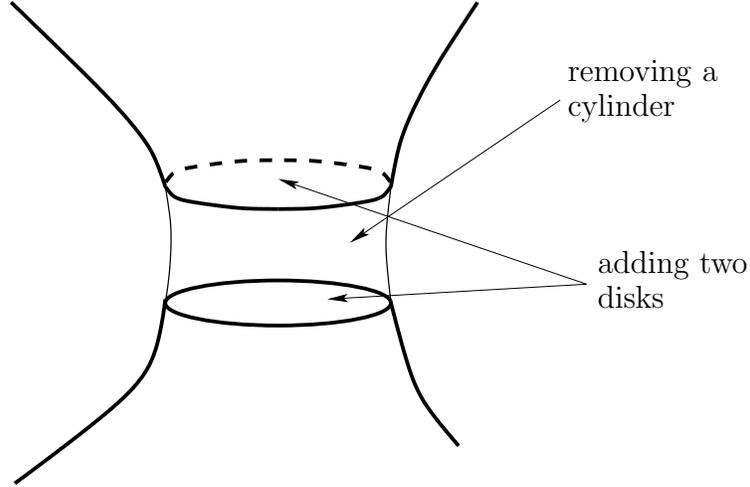}
    \caption{Cutting away a neck}
    \label{f:cutting}
\end{center}
\end{figure}

\begin{definition}\label{d:surgery}
Let $\Sigma$ and $\tilde{\Sigma}$ be two closed smooth
embedded surfaces. We say that $\tilde{\Sigma}$ is obtained
from $\Sigma$ by cutting away a neck if:
\begin{itemize}
\item $\Sigma\setminus \tilde{\Sigma}$ is 
homeomorphic to $S^1\times ]0,1[$;
\item $\tilde{\Sigma}\setminus \Sigma$ is 
homeomorphic to the disjoint union of two open
disks;
\item $\tilde{\Sigma}\Delta \Sigma$ is a contractible sphere.
\end{itemize}
We say that $\tilde{\Sigma}$ is obtained from $\Sigma$
through surgery if there is a finite number of surfaces
$\Sigma_0=\Sigma, \Sigma_1, \ldots, \Sigma_N = 
\tilde{\Sigma}$ such that each $\Sigma_k$ is 
\begin{itemize}
\item either isotopic to the union of 
some connected components of $\Sigma_{k-1}$;
\item or obtained from $\Sigma_{k-1}$
by cutting away a neck.
\end{itemize}
\end{definition}

Clearly, if $\tilde{\Sigma}$ is obtained from
$\Sigma$ through surgery, then $\gen (\tilde{\Sigma})
\leq \gen (\Sigma)$. 
We are now ready to state our next Proposition.

\begin{propos}\label{p:intorno}
Let $\eps\leq \eps_0$ be positive.
For each $j$ sufficiently large and for
$t$ sufficiently close to $t_j$, we can find a surface
$\tilde{\Sigma}^j_t$ obtained from $\Sigma^j_t$ through surgery
and satisfying the following properties:
\begin{itemize}
\item $\tilde{\Sigma}^j_t$ is contained in $T_{2\eps} \Gamma$;
\item $\tilde{\Sigma}^j_t\cap T_\eps \Gamma =
\Sigma^j_t\cap T_\eps \Gamma$.
\end{itemize}
\end{propos}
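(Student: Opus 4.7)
The strategy is to perform all surgery inside the annular region $T_{2\eps}\Gamma\setminus T_{\eps}\Gamma$, where $\Sigma^j_t$ has very small area for $j$ large and $t$ close to $t_j$, by slicing along a carefully chosen level set $\partial T_r\Gamma$ with $r\in(\eps,2\eps)$, capping the resulting short boundary curves by small disks, and finally discarding the connected components of the surgered surface that lie outside $T_r\Gamma$.

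First I would establish the small-area estimate
\[
\haus^2\!\left(\Sigma^j_t\cap (T_{2\eps}\Gamma\setminus T_\eps\Gamma)\right)\;\leq\;\delta_j,\qquad \delta_j\downarrow 0,
\]
valid for $j$ large and $t$ sufficiently close to $t_j$ (with $t\notin T$ so that $\Sigma^j_t$ is a smooth submanifold, cf.\ Remark~\ref{r:epsilonj}). Since $\supp(\|V\|)=\Gamma$, varifold convergence $\Sigma^j\to V$ gives $\|\Sigma^j\|(\overline{T_{2\eps}\Gamma}\setminus T_{\eps/2}\Gamma)\to 0$; the smooth convergence $\Sigma^j_t\to\Sigma^j$ on compact subsets of $M\setminus P_j$, combined with the continuity of $\haus^2(\Sigma^j_t)$ in $t$, then controls the area in the desired annular region (the mass that could a priori concentrate near $P_j$ as $t\to t_j$ is ruled out by comparing total areas with the areas on a compact exhaustion of $M\setminus P_j$).

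Next I would apply the coarea formula to the smooth distance function $\rho(x)=\dist(x,\Gamma)$ on the tubular shell $T_{2\eps_0}\Gamma\setminus\Gamma$, obtaining
\[
\int_\eps^{2\eps}\haus^1\!\left(\Sigma^j_t\cap\partial T_r\Gamma\right)\,dr\;\leq\;\delta_j.
\]
Sard's theorem then produces $r=r(j,t)\in(\eps,2\eps)$ such that $\partial T_r\Gamma$ is transverse to $\Sigma^j_t$, avoids the finite set $P_j$, and meets $\Sigma^j_t$ in a disjoint union of smooth simple closed curves of total length at most $\delta_j/\eps$. Since $\partial T_r\Gamma$ is a smooth closed surface whose induced geometry varies continuously with $r\in[\eps,2\eps]$, its systole is bounded below by some $\sigma(\eps)>0$ uniformly in $r$; hence for $j$ large each intersection curve $c$ is null-homotopic on $\partial T_r\Gamma$, and by the isoperimetric inequality on $\partial T_r\Gamma$ it bounds a geodesic disk $D_c\subset\partial T_r\Gamma$ of arbitrarily small area.

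I would then perform the surgery one curve at a time. For each $c$, transversality yields a thin annular neighborhood $A_c\subset\Sigma^j_t$ of $c$; removing $A_c$ and capping the two resulting boundary circles by two disks obtained as small normal graphs over $D_c$ (one pushed slightly inside and one slightly outside $\partial T_r\Gamma$) is a legitimate neck cut in the sense of Definition~\ref{d:surgery}, since $A_c\cup(\textrm{two caps})$ is a small embedded sphere bounding a ball in $M$. After these cuts are performed for every $c$, each connected component of the resulting surface lies either entirely inside $\overline{T_r\Gamma}$ or entirely inside $M\setminus\overline{T_r\Gamma}$; I then discard the latter (the second allowed surgery operation). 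The resulting $\tilde\Sigma^j_t$ sits inside $T_{2\eps}\Gamma$ (since $r<2\eps$ and the caps stay close to $\partial T_r\Gamma$) and coincides with $\Sigma^j_t$ inside $T_\eps\Gamma$ (since $r>\eps$ and the surgery is localized in the annular shell). The main obstacle I anticipate is the null-homotopy/cap-size step: one needs the intersection curves to have length uniformly below the systole of the entire family $\{\partial T_r\Gamma\}_{r\in[\eps,2\eps]}$ and the disks $D_c$ to be so small that their perturbed caps fit strictly inside $T_{2\eps}\Gamma\setminus T_\eps\Gamma$. Both requirements are guaranteed by fixing $\eps<\eps_0$ and exploiting the smooth retraction of $T_{2\eps_0}\Gamma$ onto $\Gamma$, which provides uniform control on the geometry, systoles and isoperimetric constants of $\partial T_r\Gamma$ for $r\in[\eps,2\eps]$.
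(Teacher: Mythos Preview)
Your proposal is correct and follows essentially the same route as the paper: small area in the shell $T_{2\eps}\Gamma\setminus \overline{T_\eps\Gamma}$ from varifold convergence, coarea plus Sard to locate a level $\partial T_r\Gamma$ meeting $\Sigma^j_t$ transversally in curves of small total length, the observation that such short curves bound small disks on $\partial T_r\Gamma$, neck surgery along these curves, and finally discarding the outer components. Your justification of the small-area estimate (ruling out mass concentration at $P_j$ via (c1) and smooth convergence on $M\setminus P_j$) is in fact slightly more explicit than the paper's, and your systole/isoperimetric phrasing is an equivalent formulation of the paper's property (B).
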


\subsection{Proof of Theorem
\ref{t:main2}}
Proposition \ref{p:intorno} and Proposition
\ref{p:lifting} allow us to conclude the proof of
Theorem \ref{t:main2}. We only need the following
standard fact for the first integral homology group of
a smooth closed connected surface
(see Sections 4.2 and 4.5 of \cite{Massey}).

\begin{lemma}\label{l:algebra}
Let $\Gamma$ be a connected closed $2$--dimensional
surface with genus $\gen$.
If $\Gamma$ is orientable, then $H^1 (\Gamma) = \Z^{2\gen}$.
If $\Gamma$ is non--orientable, then $H^1 (\Gamma) = 
\Z^{\gen -1}\times
\Z_2$.
\end{lemma}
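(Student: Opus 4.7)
The plan is to invoke the classification theorem for closed $2$-manifolds to put $\Gamma$ into a standard polygonal CW decomposition, and then compute the first homology cellularly; the universal coefficient theorem handles the passage from $H_1$ to $H^1$ when needed. Since these are textbook computations, my goal would be to sketch them once and otherwise defer to Massey (Sections 4.2 and 4.5), as the paper already does.

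For an orientable closed surface $\Gamma$ of genus $\gen$, identify a $4\gen$-gon via the word $\prod_{i=1}^{\gen}[a_i,b_i]$. This yields a CW structure with one $0$-cell, $2\gen$ $1$-cells, and one $2$-cell. The cellular differential $\partial_2 : \Z \to \Z^{2\gen}$ is determined by the exponent sums of the generators in the boundary word; every commutator $[a_i,b_i]$ has all exponent sums equal to zero, so $\partial_2=0$. Hence $H_1(\Gamma;\Z)\cong \Z^{2\gen}$, and by universal coefficients $H^1(\Gamma;\Z)\cong \operatorname{Hom}(\Z^{2\gen},\Z) = \Z^{2\gen}$.

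For a non-orientable closed surface $\Gamma$ of genus $\gen$, identify a $2\gen$-gon via the word $a_1^{2}a_2^{2}\cdots a_\gen^{2}$, giving one $0$-cell, $\gen$ $1$-cells, and one $2$-cell. Now $\partial_2:\Z\to\Z^\gen$ sends the $2$-cell to $2(a_1+\cdots+a_\gen)$. After the elementary change of basis $e_1' := a_1+\cdots+a_\gen$ and $e_i' := a_i$ for $i\geq 2$, the image becomes $2\Z\cdot e_1'$, and so $H_1(\Gamma;\Z)\cong \Z^{\gen-1}\oplus \Z_2$.

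There are no real obstacles: the only subtle point is notational, since in the non-orientable case the $\Z_2$-factor is really the torsion of $H_1$ (which is what the arguments in Section \ref{s:overview} actually need in order to detect non-trivial mod-$2$ classes along the lifted curves produced by Proposition \ref{p:lifting}); the lemma's ``$H^1$'' must therefore be read as first homology, or equivalently one notes that $\operatorname{Ext}(H_0,\Z)=0$ for connected $\Gamma$ and invokes universal coefficients in the form $H^1(\Gamma;\Z)\oplus \text{torsion}(H_1(\Gamma;\Z)) \cong H_1(\Gamma;\Z)$ coming from Poincaré--Lefschetz duality together with UCT.
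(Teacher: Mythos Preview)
Your approach is correct and is precisely the standard textbook computation; the paper itself does not prove this lemma but simply cites Massey (Sections 4.2 and 4.5), so there is nothing to compare beyond noting that the cellular computation you sketch is exactly what one finds there.

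You are right to flag the notational point: read literally as singular cohomology with integer coefficients, the non-orientable formula is false, since UCT gives $H^1(\Gamma;\Z)\cong\operatorname{Hom}(H_1(\Gamma;\Z),\Z)=\Z^{\gen-1}$ with no torsion. The lemma should be read as a statement about $H_1$, and indeed the paper's application (property (Hom) in the proof of Theorem~\ref{t:main2}) concerns integer linear independence of curve classes in first \emph{homology}. One small correction to your last paragraph, though: the argument in Section~\ref{s:overview} uses only the free rank of $H_1$. It needs $2m_i$ curves that are $\Z$-linearly independent, with $2m_i=2\gen$ in the orientable case and $2m_i=\gen-1$ in the non-orientable case, and then applies the orientable case of the lemma to the (orientable) surface $\tilde{\Sigma}^j_t\cap T_{2\eps}\Gamma^i$ to bound its genus from below. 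The $\Z_2$ torsion factor is never used; your speculation about detecting ``non-trivial mod-$2$ classes along the lifted curves'' is not what is happening there.
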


The proof of Proposition \ref{p:intorno} is given below,
at the end of this section. 
The rest of the
paper is then dedicated to prove Simon's Lifting Lemma. 
We now come to the proof of Theorem \ref{t:main2}.

\begin{proof}[Proof of Theorem \ref{t:main2}]
Define $m_i = \gen (\Gamma^i)$ if $i$ is orientable
and $(\gen (\Gamma^i)-1)/2$ if not. Our aim is to show
that 
\begin{equation}\label{e:claim3}
\sum_i m_i \;\leq\; \liminf_{j\uparrow \infty}
\liminf_{t \to t_j} \gen (\Sigma^j_t)\, .
\end{equation}
By Lemma \ref{l:algebra}, for each $\Gamma^i$ there are $2m_i$ curves
$\gamma^{i,1},\ldots, \gamma^{i, 2m_i}$ with the following property:
\begin{itemize}
\item[(Hom)] If $k_1, \ldots, k_{2m_i}$ are integers such that
$k_1 \gamma^{i,1} + \ldots + k_{2m_i} \gamma^{i,2m_i}$ 
is homologically trivial in $\Gamma^i$, then $k_l=0$ for every $l$.
\end{itemize}
Since $\eps<\eps_0/2$, $T_{2\eps} \Gamma^i$ can be retracted smoothly
on $\Gamma^i$. Hence:
\begin{itemize}
\item[(Hom')] If $k_1, \ldots, k_{2m_i}$ are integers such that
$k_1 \gamma^{i,1} + \ldots + k_{2m_i} \gamma^{i,2m_i}$ is 
homologically trivial in $T_{2\eps} \Gamma^i$, 
then $k_l=0$ for every $l$.
\end{itemize}

Next, fix $\eps< \eps_0$
and let $N$ be sufficiently large so that,
for each $j\geq N$, Simon's Lifting Lemma applies to each curve
$\gamma^{i,l}$. We require, moreover, that $N$ is large enough so
that Proposition \ref{p:intorno} applies to every $j>N$.

Choose next any $j>N$ and consider
the curves $\tilde{\gamma}^{i,l}$ lying in
$T_\eps \Gamma\cap \Sigma^j$ given by Simon's Lifting Lemma. 
Such surfaces are therefore homotopic to 
$n_{i,l} \gamma^{i,l}$ in $T_\eps
\Gamma^i$, where each $n_{i,l}$ is a positive integer.
Moreover, for each $t$ sufficiently
close to $t_j$ consider the surface $\tilde{\Sigma}^j_t$
given by Proposition \ref{p:intorno}.
The surface $\tilde{\Sigma}^j_t$
decomposes into the finite number of components
(not necessarily connected) $\tilde{\Sigma}^j_t\cap T_{2\eps}
\Gamma^i$. Each such surface is orientable and
\begin{equation}
\sum_i \gen (\tilde{\Sigma}^j_t\cap T_{2\eps}
\Gamma^i) \;=\;
\gen (\tilde{\Sigma}^j_t)
\;\leq\; \gen (\Sigma^j_t)\, .
\end{equation}
We claim that
\begin{equation}\label{e:claim10}
m_i \;\leq\; \liminf_{t\to t_j}
\gen (\tilde{\Sigma}^j_t\cap T_{2\eps} \Gamma^i)\, ,
\end{equation}
which clearly would conclude the proof.

Since $\Sigma^j_t$ converges smoothly to
$\Sigma^j$ outside $P_j$, we conclude that
$\tilde{\Sigma}^j_t\cap T_\eps
\Gamma^i$ converges smoothly to
$\Sigma^j\cap T_\eps \Gamma^i$ outside $P_j$.
Since each $\gamma^{i,l}$ does not intersect $P_j$,
it follows that, for $t$ large enough,
there exist curves $\hat{\gamma}^{i,l}$ contained
in $\tilde{\Sigma}^j_t\cap T_\eps \Gamma^i$ and homotopic  
to $\tilde{\gamma}^{i,l}$ in $T_{\eps} \Gamma^i$.

\medskip

Summarizing:
\begin{itemize}
\item[(i)] Each $\tilde{\gamma}^{i,l}$ is homotopic
to $n_{i,l} \gamma^{i,l}$ in $T_{2\eps} \Gamma^i$ for
some positive integer $n_{i,l}$;
\item[(ii)] Each $\tilde{\gamma}^{i,l}$
is contained in $\tilde{\Sigma}^j_t\cap T_{2\eps}
\Gamma^i$;
\item[(iii)] $\tilde{\Sigma}^j_t\cap T_{2\eps}
\Gamma^i$ is a closed surface;
\item[(iv)] If $c_1 \gamma^{i,1} + \ldots +
c_{2m_i} \gamma^{i, 2m_i}$ is homologically trivial
in $T_{2\eps}
\Gamma^i$ and the $c_l$'s are integers, then they are all $0$.
\end{itemize}
These statements imply that:
\begin{itemize}
\item[(Hom'')] If $c_1 \tilde{\gamma}^{i,1} + \ldots +
c_{2m_i} \tilde{\gamma}^{i, 2m_i}$ is homologically
trivial in $\tilde{\Sigma}^j_t\cap T_{2\eps} \Gamma^i$
and the $c_l$'s are integers, then they are all $0$.
\end{itemize}
From Lemma \ref{l:algebra},
we conclude again that $\gen (\tilde{\Sigma}^j_t\cap T_{2\eps}
\Gamma^i)\geq m_i$. 
\end{proof}

\subsection{Proof of Proposition \ref{p:intorno}}
Consider the set $\Omega = T_{2\eps} \Gamma \setminus 
\overline{T_\eps \Gamma}$. Since $\Sigma^j$ converges,
in the sense of varifolds, to $\Gamma$, we have
\begin{equation}\label{e:svanisce}
\lim_{j\uparrow \infty}
\limsup_{t\to t_j} \haus^2 (\Sigma^j_t\cap \Omega) \;=\; 0\, .
\end{equation}
Let $\eta>0$ be a positive number to be fixed later and
consider $N$ such that 
\begin{equation}\label{e:svanisce2}
\limsup_{t\to t_j} \haus^2 (\Sigma^j_t\cap \Omega) \;<\; \eta/2\, 
\qquad \mbox{for each $j\geq N$.}
\end{equation}
Fix $j\geq N$ and let $\delta_j>0$ be such that 
\begin{equation}\label{e:svanisce3}
\haus^2 (\Sigma^j_t\cap \Omega) \;<\; \eta\, 
\qquad \mbox{if $|t_j-t|< \delta_j$.}
\end{equation}
For each $\sigma\in ]\eps, 2\eps[$ consider $\Delta_\sigma :=
\partial\, (T_\sigma \Gamma)$, i.e. the boundary
of the tubular neighborhood $T_\sigma \Gamma$. 
The surfaces $\Delta_\sigma$ are a smooth
foliation of $\Omega\setminus \Gamma$ and therefore, 
by the coarea formula
\begin{equation}\label{e:svanisce4}
\int_\eps^{2\eps} \Length (\Sigma^j_t\cap \Delta_\sigma)\, 
d\sigma \;\leq\; C \haus^2 (\Sigma^j_t\cap \Omega) \;<\; C \eta\, 
\end{equation}
where $C$ is a constant independent of $t$ and $j$. Therefore, 
\begin{equation}\label{e:svanisce5}
\Length (\Sigma^j_t\cap \Delta_\sigma) < \frac{2C\eta}{\eps}\, 
\end{equation}
holds for a set of $\sigma$'s with measure at least $\eps/2$.

By Sard's Lemma we can fix a $\sigma$ such that
\eqref{e:svanisce4} holds and $\Sigma^j_t$ intersects 
$\Delta_t$ transversally. 

For positive constants $\lambda$ and $C$, 
independent of $j$ and $t$,
the following holds: 
\begin{itemize}
\item[(B)] For any $s\in ]0, 2\eps[$,
any simple closed curve $\gamma$ lying on 
$\Delta_s$ with $\Length (\gamma)\leq 
\lambda$ bounds an embedded disk $D\subset \Delta_s$ with 
$\diam (D)\leq C \Length (\gamma)$.
\end{itemize}

Assume that $2C\eta/\eps < \lambda$. By construction,
$\Sigma^j_t\cap \Delta_\sigma$ is a finite collection
of simple curves. Consider $\tilde{\Omega} := T_{\sigma+\delta}
\Gamma \setminus \overline{T_{\sigma-\delta} \Gamma}$.
For $\delta$ sufficiently small, $\tilde{\Omega}\cap
\Sigma^j_t$ is a finite collection of cylinders,
with upper bases lying on $\Delta_{\sigma+\delta}$ and lower
bases lying on $\Delta_{\sigma-\delta}$. We ``cut away''
this finite number of necks by removing
$\tilde{\Omega}\cap
\Sigma^j_t$ and replacing them with the two disks
lying on $\Delta_{\sigma-\delta}\cup \Delta_{\sigma+\delta}$
and enjoying the bound (B). For a suitable choice of
$\eta$, the union of each neck and of the corresponding 
two disks has sufficiently small diameter. This surface
is therefore a compressible sphere, which implies
that the new surface $\hat{\Sigma}^j_t$ is obtained from
$\Sigma^j_t$ through surgery. 

We can smooth it a little: the smoothed surface will still
be obtained from $\Sigma^j_t$ through surgery and will
not intersect $\Delta_\sigma$. Therefore 
$\tilde{\Sigma}^j_t := \hat{\Sigma}^j_t
\cap T_\sigma \Gamma$ is a closed surface and is obtained
from $\hat{\Sigma}^j_t$ by dropping a finite number of connected
components.
\qed

\section{Proof of Proposition \ref{p:lifting}. Part I:
Minimizing sequences of isotopic surfaces}
\label{s:MSYbis}

A key point in the proof of Simon's Lifting Lemma is
Proposition \ref{p:MSYbis} below. Its proof,
postponed to later sections,
relies on the
techniques introduced by Almgren and Simon in 
\cite{AS} and Meeks, Simon and Yau in \cite{MSY}. 
Before stating the proposition
we need to introduce some notation.

\subsection{Minimizing sequences of isotopic
surfaces} \begin{definition}\label{minprinciple}
Let $\mathcal{I}$ 
be a class of isotopies of $M$ and $\Sigma\subset M$ 
a smooth embedded
surface. If $\{\varphi^k\}\subset \mathcal{I}$ and 
$$
\lim_{k\to\infty} 
\haus^2 (\varphi^k (1,\Sigma))\;=\;\inf_{\psi\in \mathcal{I}} 
\haus^2 (\psi (1,\Sigma))\, ,
$$
then we say that $\varphi^k(1,\Sigma)$ is a {\em minimizing
  sequence for Problem $(\Sigma, \mathcal{I})$}.

If $U$ is an open set of $M$, $\Sigma$
a surface with $\partial \Sigma \subset \partial U$
and $j\in \N$ an integer, then we define
\begin{equation}\label{e:defIsj}
\Is_j (U,\Sigma)\;:=\;
\left\{\psi\in \Is (U)\big|\, \, 
\haus^2 (\psi (\tau, \Sigma))
\leq \haus^2 (\Sigma)+1/(8j)\quad \forall 
\tau\in [0,1]\right\}\, .
\end{equation}
\end{definition}

\begin{propos}\label{p:MSYbis}
Let $U\subset M$ be an open ball with sufficiently small
radius and consider a smooth embedded
surface $\Sigma$ such that $\partial \Sigma\subset
\partial U$ is also smooth. Let $\Delta^k:= \varphi^k (1, \Sigma)$ be
a minimizing sequence for Problem $(\Sigma, \Is_j (U,
\Sigma))$, converging to a stationary varifold $V$. 
Then, $V$ is 
a smooth minimal surface $\Delta$ with smooth boundary
$\partial \Delta = \partial \Sigma$. 

Moreover, if we form a new sequence $\tilde{\Delta}^k$
by taking an arbitrary union of 
connected components of $\Delta^k$, it converges,
up to subsequences, to the union of some connected components
of $\Delta$.
\end{propos}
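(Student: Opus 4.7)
The plan is to follow the outline sketched by the authors in Sections \ref{s:MSY1}--\ref{s:MSY5}, adapting the Meeks--Simon--Yau and Almgren--Simon machinery to the setting of a minimizing sequence inside the constrained class $\Is_j(U,\Sigma)$ rather than the usual isotopy class. The core difficulty is that the constraint \eqref{e:defIsj} on \emph{intermediate} slices prevents one from directly applying $\gamma$--reduction to a single surface, and the presence of boundary prevents one from applying the interior regularity theory of \cite{MSY} verbatim. My strategy is to split the proof into a local replacement step (converting an almost--minimizing surface into something truly minimizing), a regularity step (interior plus boundary), and a connectedness step.

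First, I would establish the \textbf{convex--hull property}: since $U$ is a ball of sufficiently small radius in $M$, the squared distance from a fixed point of $U$ is strictly subharmonic on minimal surfaces, and the mean curvature of $\partial U$ points inward. This means that any element of a minimizing sequence can be modified through a small isotopy (supported near $\partial U$) that pushes it into an arbitrarily thin tubular neighborhood of $\Sigma \cup \partial U$ without increasing area. This would give a uniform confinement of $\Delta^k$ near $\partial\Sigma$, which is the essential prerequisite for invoking Allard's boundary regularity result from \cite{All2}.

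Next comes the \textbf{squeezing lemma}: if $B\subset\subset U$ is a small ball (at interior points) or a small half--ball (at boundary points of $\Sigma$), and if $\Delta^k$ has small area in $B$ relative to the competitor from the minimization problem in $B$ (with boundary data $\Delta^k \cap \partial B$), then one can construct an isotopy in $\Is_j(U,\Sigma)$ that realizes an area decrease arbitrarily close to the unconstrained minimum. The key technical point is to control the intermediate slices so as not to violate the $1/(8j)$ bound in \eqref{e:defIsj}: one does this by first contracting $\Delta^k\cap B$ to $\partial B$ (area--decreasing), then executing the unconstrained surgery in $B$, and finally using the fact that $\Sigma$ itself has only a tiny amount of area inside $B$ when $B$ is small. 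This effectively transfers the Meeks--Simon--Yau minimizing property to our constrained sequence at every point of sufficiently small scale. Combined with the $\gamma$--reduction of \cite{MSY}, carried out on the \emph{trace} of the minimizing sequence rather than on a single element (which is where the strategy of \cite{Sm} breaks down, as indicated in Section \ref{s:MSY3}), this yields smoothness of $V$ at interior points. Allard's boundary regularity theorem \cite{All2}, together with the convex--hull bound, then upgrades this to smoothness of $V$ up to $\partial\Sigma$ with $\partial\Delta = \partial\Sigma$.

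Finally, for the \textbf{connected--component} statement, the issue is that a priori two components of $\Delta^k$ could collapse onto the same component of $\Delta$, or a component could disappear. I would argue as follows. By the interior and boundary regularity just established, and by Schoen's estimate \eqref{e:curva1} (which applies since any component of $\Delta^k$ is itself area--minimizing in its isotopy class and hence stable), each connected component of $\Delta^k$ converges smoothly away from a finite set of points to a minimal surface with multiplicity. Because the ambient ball $U$ is small, the monotonicity formula \eqref{e:MonFor} forces the density of each limit component to be one (no collapsing of two sheets onto one component), and a standard maximum principle argument inside $U$ rules out a component of $\Delta^k$ shrinking to a point or escaping through $\partial U$. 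Hence each connected component of $\Delta^k$ converges to a well--defined connected component of $\Delta$, and arbitrary unions pass to the limit as claimed. The main obstacle I expect will be the squeezing lemma: controlling intermediate areas during the replacement while still producing a genuine unconstrained minimizer in each small ball is delicate, and as noted in the text this is precisely where Smith's suggested approach does not go through directly.
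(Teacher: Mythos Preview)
Your outline tracks the paper's structure, but two steps contain genuine gaps rather than mere imprecision.

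\textbf{Boundary regularity.} You write that Allard's theorem together with the convex--hull bound ``upgrades'' interior regularity to smoothness up to $\partial\Sigma$. This is where the paper's real work lies, and it does not come for free. Allard's boundary regularity requires that the boundary density of $V$ equal $1/2$, i.e.\ that every tangent cone at $x\in\partial\Sigma$ be a \emph{single} half--disk with multiplicity one. The convex--hull property only confines the tangent cone to a wedge; combined with the interior smoothness and the reflection principle it shows the cone has the form $\sum_i k_i P_{\theta_i}$ for half--planes $P_{\theta_i}$ through $\ell$, but does \emph{not} rule out $N\geq 2$ or $k_1\geq 2$. The paper devotes all of Section~\ref{s:MSY4} to this: one takes a diagonal sequence $\Gamma_n=g_{x,\rho_n}(\Delta^{k_n})$, slices by a good sphere $\partial\mathcal{B}_\sigma$, and uses the Almgren--Simon disk theorem plus explicit competitor constructions (cones, the minimal graph over the wedge, etc.) to force $\sum_i k_i=1$. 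Your proposal contains no mechanism for this step.

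\textbf{Connected components.} Your argument here is based on a false premise: you claim each connected component of $\Delta^k$ is ``area--minimizing in its isotopy class and hence stable'', and then invoke Schoen's estimate \eqref{e:curva1}. But $\Delta^k$ is merely an element of a minimizing \emph{sequence} for Problem $(\Sigma,\Is_j(U,\Sigma))$; it is not minimal, not stable, and its components enjoy no curvature bound whatsoever. No smooth--convergence argument is available. The paper's proof (Lemma~\ref{l:components}) is purely measure--theoretic: pass to a subsequence so that $\tilde\Delta^k$ and $\Delta^k\setminus\tilde\Delta^k$ converge as integer rectifiable currents to $T=\sum h_i[[\Gamma_i]]$ and $T'=\sum h'_i[[\Gamma_i]]$, use $\|T\|\leq\|W\|\leq\|V\|$ together with $\partial T\subset\partial\Sigma$ and the maximum principle (no closed minimal surface in a small ball) to force $h_i,h'_i\in\{0,1\}$ and $h_i+h'_i=1$, and conclude $\|W\|=\|T\|=\sum h_i\haus^2\res\Gamma_i$. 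Your monotonicity/maximum--principle sketch does not substitute for this, because without regularity of the individual $\Delta^k$ you have no handle on what the limit of a subfamily looks like except through currents.
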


In fact, we believe that the proof of
Proposition \ref{p:MSYbis} could be modified to
include any open set $U$ with smooth, uniformly convex boundary.
However, such a statement would imply several 
technical complications in Section \ref{s:MSY1} and
hence goes beyond our scopes. Instead, the following
simpler statement can be proved directly with our arguments, though
we do not give the details.

\begin{propos}\label{p:MSYtris}
Let $U\subset M$ be a uniformly convex open set
with smooth boundary and consider a smooth embedded
surface $\Sigma$ such that $\partial \Sigma\subset
\partial U$ is also smooth. Let $\Delta^k:= \varphi^k (1, \Sigma)$ be
a minimizing sequence for Problem $(\Sigma, \Is (U))$, 
converging to a stationary varifold $V$. 
Then, $V$ is 
a smooth minimal surface $\Delta$ with smooth boundary
$\partial \Delta = \partial \Sigma$. 

Moreover, if we form a new sequence $\tilde{\Delta}^k$
by taking an arbitrary union of 
connected components of $\Delta^k$, it converges,
up to subsequences, to the union of some connected components
of $\Delta$.
\end{propos}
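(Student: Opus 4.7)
The plan is to follow the same five-step strategy developed for Proposition \ref{p:MSYbis} in Sections \ref{s:MSY1}--\ref{s:MSY5}, exploiting the two simplifications available here: the minimization runs over the unconstrained class $\Is (U)$, and $\partial U$ is smoothly uniformly convex from the outset. In particular, the squeezing lemmas of Section \ref{s:MSY2} become essentially vacuous, since every admissible competitor is available without further modification, and the detour through squeezed minimizing sequences needed in Section \ref{s:MSY3} is not required here; the $\gamma$-reduction of Meeks, Simon and Yau can be applied to $\Delta^k$ directly.

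First, one establishes the convex-hull property: because $U$ is uniformly convex, near each $x_0\in \partial U$ one constructs a smooth one-parameter family of diffeomorphisms (supported near $x_0$ and lying in $\Is(U)$) that strictly decreases area for any varifold whose support touches $\partial U$ at an interior point, contradicting stationarity of $V$. Concretely, level sets of the signed distance to $\partial U$ are strictly mean-convex near $\partial U$ by uniform convexity and provide the barrier, replacing the role played in Section \ref{s:MSY1} by geodesic spheres in a small ball. With the convex-hull property in hand, one concludes $\supp \|V\|\subset \overline{U}$ and that $\supp\|V\|\cap \partial U = \partial \Sigma$.

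Second, interior regularity follows by applying the $\gamma$-reduction of \cite{MSY} in its classical form to $\Delta^k$: since $\Is (U)$ is closed under composition with compactly supported isotopies and under time reparametrization, one obtains a nearby minimizing sequence that is $\gamma$-reduced in every sufficiently small ball contained in $U$. The MSY replacement theorem, together with the curvature-estimate consequence from Subsection \ref{ss:curvest}, then shows that $V\res U$ is a smooth embedded minimal surface. Boundary regularity up to $\partial \Sigma$ follows by combining the convex-hull property with Allard's boundary regularity theorem \cite{All2}, exactly as in Section \ref{s:MSY4}. Finally, the argument of Section \ref{s:MSY5} transfers verbatim: any subunion $\tilde{\Delta}^k$ of connected components is itself a minimizing sequence for an analogous isotopy problem associated to its own boundary, so the same regularity analysis yields convergence to a union of connected components of $\Delta$.

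The main obstacle is essentially bookkeeping: verifying that each step of Sections \ref{s:MSY1}--\ref{s:MSY5} originally tailored to $\Is_j (U,\Sigma)$ on a small ball $U$ genuinely simplifies, rather than breaks, when one passes to $\Is (U)$ on a uniformly convex $U$. The only genuinely new input is replacing the geodesic-ball barrier of Section \ref{s:MSY1} by one built from the signed distance to $\partial U$; uniform convexity makes this routine but is the single place where the hypothesis on $U$ is really used.
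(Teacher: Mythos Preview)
The paper does not give a detailed proof of Proposition~\ref{p:MSYtris}; it merely asserts that it ``can be proved directly with our arguments, though we do not give the details.'' Your five-step plan is consistent with that remark, and the broad outline is right. Two points in your sketch, however, are genuinely off.

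First, and most importantly, your description of the final step is not what Section~\ref{s:MSY5} does. You write that ``any subunion $\tilde{\Delta}^k$ of connected components is itself a minimizing sequence for an analogous isotopy problem associated to its own boundary.'' This is neither proved in Section~\ref{s:MSY5} nor obviously true: an isotopy in $\Is(U)$ that decreases the area of $\tilde{\Delta}^k$ will in general move the complementary components as well, possibly increasing their area, so the minimizing property of $\Delta^k$ does not decouple. Lemma~\ref{l:components} instead proceeds by a geometric-measure-theory argument: pass to a subsequence so that $\tilde{\Delta}^k$ and $\Delta^k\setminus\tilde{\Delta}^k$ converge both as currents and as varifolds, use $\|T\|\le\|W\|\le\|V\|=\sum_i\haus^2\res\Gamma_i$ and the constancy theorem to write $T=\sum_i h_i[[\Gamma_i]]$ with $h_i\in\{0,1\}$, and then match boundaries to conclude. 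That argument applies verbatim to Proposition~\ref{p:MSYtris} (the paper explicitly states Lemma~\ref{l:components} for both propositions), but it is not the argument you described.

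Second, for the convex-hull step your proposed barrier (level sets of the signed distance to $\partial U$) only yields $\supp\|V\|\cap\partial U\subset\partial\Sigma$. What is actually needed for Allard's boundary monotonicity and for the tangent-cone analysis in Sections~\ref{s:MSY2} and \ref{s:MSY4} is the stronger wedge conclusion of Lemma~\ref{l:convexhull}: a convex set $A$ meeting $\partial U$ in $\partial\Sigma$ \emph{transversally} in the sense of Definition~\ref{d:transversal}. One way to get this is to pull the problem back to a Euclidean ball via a diffeomorphism close to an isometry (uniform convexity makes this routine) and rerun the argument of Section~\ref{s:MSY1}; the signed-distance foliation alone does not produce the required angle bound at $\partial\Sigma$.
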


\subsection{Elementary remarks on minimizing surfaces}
We end this section by collecting some properties
of minimizing sequences of isotopic surfaces
which will be used often throughout this paper.
We start with two very elementary remarks.

\begin{remark}\label{r:contained}
If $\Sigma$ is $1/j$--a.m. in an open set 
$U$ and $\tilde{U}$ is an open set contained in $U$,
then $\Sigma$ is $1/j$--a.m. in $\tilde{U}$.
\end{remark}

\begin{remark}\label{r:stillam}
If $\Sigma$ is $1/j$--a.m. in $U$ and $\psi\in \Is_j (\Sigma, U)$
is such that $\haus^2 (\psi (1, \Sigma))\leq \haus^2 (\Sigma)$,
then $\psi (1, \Sigma)$ is $1/j$--a.m. in $U$. 
\end{remark}

Next we collect two lemmas. Their proofs are
short and we include them below for the reader's convenience.

\begin{lemma}\label{l:stillam}
Let $\Sigma_j$ be $1/j$--a.m. in annuli and $r: M \to
\rn{+}$ be the function of Theorem \ref{t:regularity}. Assume
$U$ is an open set with closure contained in 
$\an (x,\tau, \sigma)$, where $\sigma<r(x)$.
Let $\psi_j\in \Is_j (\Sigma_j, U)$ be such that 
$\haus^2 (\psi_j (1, \Sigma_j))\leq \haus^2 (\Sigma)$.
Then $\psi_j (1, \Sigma_j)$ is $1/j$--a.m. in sufficiently
small annuli.
\end{lemma}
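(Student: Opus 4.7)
The goal is to construct a function $r': M \to \rn{+}$ such that for every $y \in M$ and every $j$ large enough, the new surface $\tilde{\Sigma}_j := \psi_j(1, \Sigma_j)$ is $1/j$-a.m.\ in every annulus centered at $y$ with outer radius below $r'(y)$. The argument splits naturally according to whether $y$ lies in $\overline{U}$ or not: points outside $\overline{U}$ inherit the property directly from $\Sigma_j$, while points in $\overline{U}$ are handled by pushing the potential isotopy into the large ambient annulus $\an(x, \tau, \sigma)$ via a concatenation trick and contradicting the $1/j$-a.m.\ property of $\Sigma_j$ there.

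When $y \notin \overline{U}$, I would set $r'(y) := \min\{r(y), \dist(y, \overline{U})\} > 0$. For any $b < r'(y)$, the annulus $A := \an(y, a, b)$ is disjoint from $U$, so $\psi_j$ fixes $\overline{A}$ pointwise and $\Sigma_j$, $\tilde{\Sigma}_j$ coincide on a neighborhood of $\overline{A}$. If $\phi$ is any isotopy supported in $A$, then $\phi(t, \Sigma_j) \cap A = \phi(t, \tilde{\Sigma}_j) \cap A$ while the two surfaces are unaffected outside $A$, so the area increments $\haus^2(\phi(t, \Sigma_j)) - \haus^2(\Sigma_j)$ and $\haus^2(\phi(t, \tilde{\Sigma}_j)) - \haus^2(\tilde{\Sigma}_j)$ agree. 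Thus $\phi$ witnesses the failure of $1/j$-a.m.\ in $A$ for $\tilde{\Sigma}_j$ if and only if it does so for $\Sigma_j$; since $b < r(y)$, the latter cannot happen for $j$ large.

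When $y \in \overline{U}$, the assumption $\overline{U} \subset \an(x, \tau, \sigma)$ gives $\tau < d(y, x) < \sigma$, so I would set $r'(y) := \min\{\sigma - d(y, x), d(y, x) - \tau\}$. For $b < r'(y)$, $B_b(y) \subset \an(x, \tau, \sigma)$, whence $U \cup \an(y, a, b) \subset \an(x, \tau, \sigma)$. Suppose toward contradiction that some isotopy $\phi$ supported in $\an(y, a, b)$ violates $1/j$-a.m.\ for $\tilde{\Sigma}_j$. Concatenate $\psi_j$ and $\phi$ in time to form an isotopy $\Phi$ of $\Sigma_j$ supported in $U \cup \an(y, a, b) \subset \an(x, \tau, \sigma)$, i.e.\ $\Phi(t, \cdot) = \psi_j(2t, \cdot)$ on $[0, 1/2]$ and $\Phi(t, \cdot) = \phi(2t-1, \psi_j(1, \cdot))$ on $[1/2, 1]$, after the standard reparametrization of each piece to be constant near its temporal endpoints so that $\Phi$ is smooth at $t = 1/2$. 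The bound $\haus^2(\Phi(t, \Sigma_j)) \leq \haus^2(\Sigma_j) + 1/(8j)$ holds throughout: on $[0, 1/2]$ because $\psi_j \in \Is_j(\Sigma_j, U)$, and on $[1/2, 1]$ from condition \eqref{AM(a)} for $\phi$ combined with the hypothesis $\haus^2(\tilde{\Sigma}_j) \leq \haus^2(\Sigma_j)$. Finally $\haus^2(\Phi(1, \Sigma_j)) = \haus^2(\phi(1, \tilde{\Sigma}_j)) \leq \haus^2(\tilde{\Sigma}_j) - 1/j \leq \haus^2(\Sigma_j) - 1/j$, contradicting the $1/j$-a.m.\ of $\Sigma_j$ in $\an(x, \tau, \sigma)$, which holds for $j$ large since $\sigma < r(x)$. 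The only delicate point is the smoothness of $\Phi$ at the join, handled by the standard reparametrization trick, so I expect no substantive obstacle.
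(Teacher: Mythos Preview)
Your proof is correct and follows essentially the same approach as the paper: split according to the position of $y$, use that $\tilde{\Sigma}_j=\Sigma_j$ near annuli disjoint from $U$, and for the remaining points push a hypothetical bad isotopy into the large annulus $\an(x,\tau,\sigma)$ by concatenating with $\psi_j$. Your two-case decomposition ($y\notin\overline{U}$ versus $y\in\overline{U}$) is a slight streamlining of the paper's three cases ($y\notin B_\sigma(x)$, $y=x$, $y\in B_\sigma(x)\setminus\{x\}$), and you spell out explicitly the concatenation argument that the paper packages into Remarks~\ref{r:contained} and~\ref{r:stillam}.
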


\begin{proof} Recall the definition of $1/j$--a.m. in sufficiently
small annuli.
This means that there is a function $r: M\to \rn{+}$ such that
$\Sigma$ is $1/j$--a.m. on every annulus centered
at $y$ and with outer radius smaller than $r (y)$.
Let $\an (x, \tau, \sigma)$ be an annulus on which $\Sigma$
is $1/j$--a.m. and $U\subset\subset \an (x, \tau, \sigma)$. 
If $y\not\in B_\sigma (x)$, then $\dist (y, U)>0$.
Set $r_1 (y):= \min\{r(y), \dist (y, U)\}$. Then
$\psi (1, \Sigma)=\Sigma$ on every annulus with center $y$
and radius smaller than $r_1 (y)$, and therefore it is $1/j$--a.m.
in it. If $y=x$, then the statement is obvious because
of Remark \ref{r:stillam}. If $y\in B_\sigma (x)\setminus \{x\}$,
then there exists $\rho (y), \tau (y)$ such that
$U\cup B_{\rho (y)} (y) \subset \an (x, \tau (y), \sigma)$.
By Remarks \ref{r:stillam} and \ref{r:contained},
$\psi (1, \Sigma)$ is $1/j$--a.m. on every annulus
centered at $y$ and outer radius smaller than $\rho (y)$.
\end{proof}

\begin{lemma}\label{l:uniquecont}
Let $\{\Sigma^j\}$ be a sequence as in Theorem
\ref{t:regularity} and $U$ and $\psi_j$ be as in Lemma \ref{l:stillam}.
Assume moreover that
$U$ is contained in a convex set $W$. If $\Sigma^j$
converges to a varifold $V$, then $\psi_j (1, \Sigma^j)$
converges as well to $V$.
\end{lemma}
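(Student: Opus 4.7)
The plan is to identify any subsequential varifold limit $\tilde V$ of the sequence $\psi_j(1,\Sigma^j)$ with $V$ itself, by showing that both are smooth stationary varifolds agreeing outside $\overline U$, and then appealing to unique continuation of minimal surfaces together with a convex--hull / maximum principle argument to rule out the only potential source of discrepancy, namely closed components living entirely inside $\overline U$.

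First, I would use the mass bound $\haus^2 (\psi_j(1,\Sigma^j))\le \haus^2(\Sigma^j)$ together with compactness of varifolds on the compact manifold $M$ to extract, along any subsequence, a further subsequence with $\psi_j(1,\Sigma^j)\to \tilde V$ in the sense of varifolds. By Lemma \ref{l:stillam}, $\{\psi_j(1,\Sigma^j)\}$ is $1/j$--a.m.\ in sufficiently small annuli; hence Theorem \ref{t:regularity} applies and $\tilde V$ is a smooth embedded minimal surface (with integer multiplicities). By hypothesis $V$ is as well. Since each $\psi_j$ is supported in $U$, the surfaces $\psi_j(1,\Sigma^j)$ and $\Sigma^j$ coincide pointwise on $M\setminus U$; passing to the varifold limit yields $\tilde V\res (M\setminus \overline U)=V\res (M\setminus \overline U)$.

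To upgrade this agreement to all of $M$ I would argue as follows. The supports $\supp (\|V\|)$ and $\supp (\|\tilde V\|)$ are smooth embedded minimal surfaces in $M$ coinciding in the open set $M\setminus \overline U$. By unique continuation for the minimal surface equation, every connected component of $\supp (\|\tilde V\|)$ that meets $M\setminus \overline U$ extends uniquely to, and therefore coincides with, a connected component of $\supp (\|V\|)$. Since the multiplicity of a stationary integral varifold is locally constant along each smooth connected piece of its support, and since the weights already agree on $M\setminus \overline U$, they must agree on each such shared component. The only remaining obstruction to $\tilde V=V$ is therefore the possible existence of extra components of $\supp (\|V\|)$ or $\supp (\|\tilde V\|)$ lying entirely in $\overline U\subset W$. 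But such a component would be a closed minimal surface contained in the convex region $W$, which is impossible: by the maximum principle, the squared distance to any interior point of $W$ is strictly subharmonic along a minimal surface and cannot attain an interior maximum. Hence $\tilde V=V$; since every subsequential limit equals $V$, the full sequence converges to $V$.

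The step I expect to be the main obstacle is the verification that $\tilde V$ is smooth, i.e., that the perturbed sequence really falls under the hypothesis of Theorem \ref{t:regularity}. This is exactly what Lemma \ref{l:stillam} was designed to supply: it transfers the a.m.\ in sufficiently small annuli property from $\Sigma^j$ to $\psi_j(1,\Sigma^j)$, with a possibly modified but perfectly valid choice of the radius function $r$. Once smoothness of both limits is in hand, the unique continuation plus convex--hull / maximum principle argument is essentially classical; a minor care is needed only to ensure that matching multiplicities outside $\overline U$ propagate across components, which follows from the local constancy of multiplicity on smooth pieces of a stationary integer rectifiable varifold.
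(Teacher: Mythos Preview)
Your proof is correct and follows essentially the same route as the paper's: extract a subsequential limit $\tilde V$, apply Lemma \ref{l:stillam} together with Theorem \ref{t:regularity} to get smoothness, observe that $\tilde V=V$ outside $\overline U$, and then invoke unique continuation plus the nonexistence of closed minimal surfaces in the convex set $W$. Your write-up is in fact more explicit than the paper's about the multiplicity matching along shared components and about the maximum principle used to exclude closed components, but the underlying argument is the same.
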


\begin{proof}[Proof of Lemma \ref{l:uniquecont}]
By Theorem \ref{t:regularity} $V$ is a smooth minimal surface
(multiplicity allowed).
By Lemma \ref{l:stillam}, $\psi_j (1, \Sigma^j)$ is also
$1/j$--a.m. and again by Theorem \ref{t:regularity}
a subsequence (not relabeled) converges to a varifold
$V'$ which is a smooth minimal surface. Since
$\Sigma^j = \psi_j (1, \Sigma^j)$ outside $W$, $V=V'$ outside
$W$. Being $W$ convex, it cannot contain any closed 
minimal surface, and hence by standard unique continuation,
$V=V'$ in $W$ as well.
\end{proof}

\section{Proof of Proposition \ref{p:lifting}. Part II: Leaves}
\label{s:lifting}

\subsection{Step 1. Preliminaries}  Let $\{\Sigma^j\}$
be a sequence as in Theorem \ref{t:main2}. We
keep the convention that $\Gamma$ denotes the union of
disjoint closed connected embedded minimal
surfaces $\Gamma^i$ (with multiplicity $1$) and
that $\Sigma^j$ converges, in the sense of varifolds,
to $V= \sum_i n_i \Gamma^i$.
Finally, we fix a curve $\gamma$ contained in $\Gamma$.
 
Let $r: \Gamma \to \rn{+}$ be
such that the three conclusions of 
Proposition \ref{p:existbis} hold.
Consider a finite covering $\{B_{\rho_l} (x_l)\}$ of
$M$ with $\rho_l < r (x_l)$ and denote by $C$ 
the set of the centers $\{x_l\}$.
Next, up to extraction of
subsequences, we assume that the set of singular points
$P_j\subset \Sigma^j$ converges in the sense of Hausdorff
to a finite set $P$ (recall Remark \ref{r:P}) and we denote
by $E$ the union of $C$ and $P$. 
Recalling Remark \ref{r:contained},
for each $x\in M\setminus E$
there exists a ball $B$ centered at $x$
such that:
\begin{itemize}
\item $\Sigma^j\cap B$ is a smooth surface for
$j$ large enough;
\item $\Sigma^j$ is $1/j$--a.m. in $B$
for $j$ large enough.
\end{itemize}

Deform $\gamma$ to a smooth curve contained in
$\Gamma\setminus E$ and homotopic to $\gamma$ in $\Gamma$.
It suffices to prove the claim of the Proposition
for the new curve. By abuse of notation we continue to
denote it by $\gamma$. In what follows,
we let $\rho_0$ be any given positive number so small
that:
\begin{itemize}
\item $T_{\rho_0} (\Gamma)$ can be retracted on $\Gamma$;
\item For every $x\in \Gamma$,
$B_{\rho_0} (x)\cap \Gamma$ is a disk with diameter
smaller than the injectivity radius of $\Gamma$.
\end{itemize}
For any positive 
$\rho\leq 2\rho_0$ sufficiently small,
we can find a finite set of points
$x_1, \ldots, x_N$ on $\gamma$
with the following properties (to avoid cumbersome notation
we will use the convention $x_{N+1} = x_1$):
\begin{itemize}
\item[(C1)] If we let $[x_k, x_{k+1}]$ be the geodesic
segment on $\Gamma$ connecting $x_k$ and $x_{k+1}$, then
$\gamma$ is homotopic to $\sum_k [x_k, x_{k+1}]$.
\item[(C2)] $B_\rho (x_{k+1})\cap 
B_\rho (x_k)=\emptyset$;
\item[(C3)] $B_\rho (x_k)\cup B_\rho (x_{k+1})$
is contained in a ball $B^{k,k+1}$
of radius $3 \rho$;
\item[(C4)] In any ball $B^{k,k+1}$,
$\Sigma^j$ is $1/j$--a.m. and smooth provided $j$ is large enough;
\end{itemize}
see Figure \ref{f:snake}.
From now on we will consider $j$ so large that (C4)
holds for every $k$. 
The constant $\rho$ will be chosen (very small, but
independent of $j$)
only at the end of the proof. The existence of the
points $x_k$ is guaranteed by a simple compactness argument if
$\rho_0$ is a sufficiently small number.

\begin{figure}[htbp]
\begin{center}
    \input{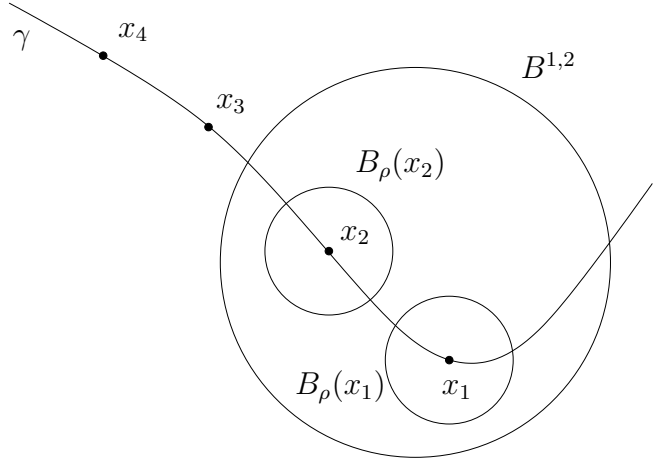}
    \caption{The points $x_l$ of (C1)-(C4).}
    \label{f:snake}
\end{center}
\end{figure}

\subsection{Step 2. Leaves} In every $B_\rho (x_k)$
consider a minimizing sequence $\Sigma^{j,l}:=
\psi_l (1, \Sigma^j)$ for 
Problem $(\Sigma^j, \Is_j (B_\rho (x_k),
\Sigma^j))$. Using Proposition \ref{p:MSYbis},
extract a subsequence converging 
(in $B_\rho (x_k)$) to
a smooth minimal surface $\Gamma^{j,k}$ with boundary
$\partial \Gamma^{j,k} = \Sigma^j\cap B_\rho (x_k)$.
This is a stable minimal surface, and we claim
that, as $j\uparrow \infty$, $\Gamma^{j,k}$ converges
smoothly on every ball $B_{(1-\theta)\rho} (x_k)$ (with
$\theta<1$) to $V$. Indeed, this is
a consequence of Schoen's curvature estimates, see Subsection
\ref{ss:curvest}. 

By a diagonal argument, if $\{l_j\}$ grows sufficiently
fast, $\Sigma^{j, l_j}\cap B_\rho (x_k)$ has the same limit 
as $\Gamma^{j,k}$.
On the other hand, for $\{l_j\}$ growing sufficiently fast,
Lemmas \ref{l:stillam} and \ref{l:uniquecont} apply,
giving that $\Sigma^{j,l_j}$ converges to $V$.

Therefore, $\Gamma^{j,k}$ converges smoothly to 
$n_i \Gamma^i \cap B_{(1-\theta)\rho} (x_k)$ 
in $B_{(1-\theta)\rho} (x_k)$ for
every positive $\theta<1$.
Therefore any connected
component of $\Gamma^{j,k}\cap B_{(1-\theta)\rho} (x_k)$ 
is eventually (for large $j$'s) a disk (multiplicity
allowed). 
The area of such a disk is,
by the monotonicity formula for minimal surfaces, at least
$c (1-\theta)^2\rho^2$, where $c$ is a constant depending
only on $M$. From now on we consider $\theta$ fixed, though
its choice will be specified later. 

Up to extraction of subsequences, we can assume that
for each connected component $\hat{\Sigma}^j$
of $\Sigma^j$, $\psi_l (1, \hat{\Sigma}^j)$ converges 
to a finite union of connected components
of $\Gamma^{j,k}$. However, in $B_{(1-\theta)\rho} (x_k)$,
\begin{itemize}
\item either their limit is zero;
\item or the area of $\psi_l (1, \hat{\Sigma}^j)$ in 
$B_{(1-\theta)\rho} 
(x_k)$ is larger than $c (1-2\theta)^2 \rho^2$
for $l$ large enough. 
\end{itemize}

We repeat this argument for every $k$. Therefore,
for any $j$ sufficiently large, we define
the set $\mathcal{L} (j,k)$ whose elements are those
connected components $\hat{\Sigma}^j$ of $\Sigma^j\cap
B_\rho (x_k)$ such that $\psi_l (1, \hat{\Sigma}^j)$
intersected with $B_{(1-\theta)\rho} (x_k)$ has area at
least $c (1-2\theta)^2 \rho^2$. 

Recall that $\Sigma^j$ is converging to 
$n_i\Gamma^i\cap B_\rho (x_k)$ in $B_\rho (x_k)$ in
the sense of varifolds. Therefore, the area of $\Sigma^j$
is very close to $n_i \haus^2 (\Gamma^i\cap B_\rho (x_k))$.
On the other hand, by definition $\haus^2 (\psi_l (1,
\Sigma^j)\cap B_\rho (x_k))$ is not larger.
This gives a bound to the cardinality of $\mathcal{L} (j,k)$,
independent of $j$ and $k$.
Moreover, if $\rho$ and $\theta$ are sufficiently small.
the constants $c$ and $\eps$ get so close, respectively, 
to $1$ and $0$ that the cardinality of
$\mathcal{L} (j, k)$ can be at most $n_i$. 

\subsection{Step 3. Continuation of the leaves}
We claim the following

\begin{lemma}[Continuation of the leaves]\label{l:cont}
If $\rho$ is sufficiently small, then for every
$j$ sufficiently large and for every element 
$\Lambda$ of $\mathcal{L} (j,k)$ there is an element
$\tilde{\Lambda}$ of $\mathcal{L} (j, k+1)$ such that
$\Lambda$ and $\tilde{\Lambda}$ are contained
in the same connected component of $\Sigma^j\cap
B^{k,k+1}$.
\end{lemma}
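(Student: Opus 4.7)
The plan is to apply Proposition \ref{p:MSYbis} a second time, now in the larger ball $B^{k,k+1}$, to obtain an analogous family of leaves there, and then to match the leaves of $\mathcal{L}(j,k)$ and $\mathcal{L}(j,k+1)$ to this intermediate family via a mass-counting argument.

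First, I would apply Proposition \ref{p:MSYbis} to $\Sigma^j$ in $B^{k,k+1}$, obtaining a minimizing sequence $\Phi_l(1,\Sigma^j)$ for Problem $(\Sigma^j, \Is_j(B^{k,k+1},\Sigma^j))$ converging to a smooth minimal surface $\Gamma^{j,k,k+1}$. Schoen's curvature estimate and Lemma \ref{l:uniquecont} give, exactly as in Step 2, smooth convergence $\Gamma^{j,k,k+1}\to n_i\Gamma^i\cap B^{k,k+1}$ on any concentric sub-ball of slightly smaller radius, together with an analogous family $\mathcal{L}(j,k,k+1)$ of leaves: those connected components $\Lambda^*$ of $\Sigma^j\cap B^{k,k+1}$ whose $\Phi_l$-image has area bounded below by $c(1-2\theta)^2(3\rho)^2$ on such a sub-ball. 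Since $\rho\leq\rho_0$, the intersection $\Gamma^i\cap B^{k,k+1}$ is a single topological disk containing $\Gamma^i\cap B_\rho(x_k)$ and $\Gamma^i\cap B_\rho(x_{k+1})$ as sub-disks, so smooth convergence forces every element of $\mathcal{L}(j,k,k+1)$ to have significant area in both $B_{(1-\theta)\rho}(x_k)$ and $B_{(1-\theta)\rho}(x_{k+1})$.

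The crux is then a double correspondence: (i) for every $\Lambda\in\mathcal{L}(j,k)$, the component $\Lambda^*:=\iota(\Lambda)$ of $\Sigma^j\cap B^{k,k+1}$ containing $\Lambda$ belongs to $\mathcal{L}(j,k,k+1)$; and (ii) symmetrically every $\Lambda^*\in\mathcal{L}(j,k,k+1)$ contains at least one leaf of $\mathcal{L}(j,k+1)$. Once (i) and (ii) are established the lemma is immediate: given $\Lambda\in\mathcal{L}(j,k)$, take $\Lambda^*=\iota(\Lambda)\in\mathcal{L}(j,k,k+1)$ from (i) and let $\tilde\Lambda\in\mathcal{L}(j,k+1)$ be the leaf inside $\Lambda^*$ provided by (ii); by construction $\Lambda$ and $\tilde\Lambda$ lie in the same connected component $\Lambda^*$ of $\Sigma^j\cap B^{k,k+1}$. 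To prove (i) and (ii), the essential tool is the second assertion of Proposition \ref{p:MSYbis} applied separately in $B_\rho(x_k)$, $B_\rho(x_{k+1})$ and $B^{k,k+1}$: it guarantees that any union of connected components of $\Sigma^j$ is carried by the corresponding minimizing isotopy to a union of components of the corresponding smooth minimal limit. Combined with Lemma \ref{l:uniquecont}, which identifies all three varifold limits with $n_i\Gamma^i$, this yields the mass-conservation identity
\[
\sum_{\Lambda\in\mathcal{L}(j,k)}\mu(\Lambda)\;=\;n_i\;=\;\sum_{\Lambda^*\in\mathcal{L}(j,k,k+1)}\mu(\Lambda^*),
\]
where $\mu$ denotes the multiplicity carried by each leaf in the limit (and analogously for $\mathcal{L}(j,k+1)$). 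A bookkeeping argument then forces $\mu(\Lambda^*)$ to equal the sum of $\mu(\Lambda)$ over the leaves $\Lambda\in\mathcal{L}(j,k)$ contained in $\Lambda^*$; comparing the totals establishes (i), and (ii) follows by a symmetric argument.

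The main obstacle is that the three minimizing isotopies $\psi_l$ (in $B_\rho(x_k)$), $\psi'_m$ (in $B_\rho(x_{k+1})$) and $\Phi_l$ (in $B^{k,k+1}$) act on $\Sigma^j$ independently, so one cannot directly compare $\psi_l(1,\Lambda)$ with $\Phi_l(1,\Lambda^*)$ as surfaces and the correspondence cannot be established by pointwise inspection. The resolution is to use the common varifold limit $n_i\Gamma^i$ as a reference, reducing the geometric comparison to the scalar multiplicity identity displayed above.
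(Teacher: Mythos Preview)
Your strategy is different from the paper's, and the step you flag as ``bookkeeping'' is precisely where it breaks. You want the termwise identity
\[
\mu_{k,k+1}(\Lambda^*) \;=\; \sum_{\substack{\Lambda\in\mathcal L(j,k)\\ \Lambda\subset\Lambda^*}}\mu_k(\Lambda),
\]
but the only thing your mass--conservation identity gives is that both sides sum to $n_i$ when you run over \emph{all} $\Lambda^*$. That is not enough: it is perfectly compatible with those two equalities that some $\Lambda^*$ containing a leaf $\Lambda\in\mathcal L(j,k)$ satisfies $\mu_{k,k+1}(\Lambda^*)=0$, while a different $\Lambda^{*'}$ picks up the excess multiplicity. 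The obstacle you identify---that $\psi_l$, $\psi'_m$ and $\Phi_l$ are three unrelated minimizing isotopies---is exactly the point; neither Proposition~\ref{p:MSYbis} (which tracks components under a \emph{single} minimizing sequence) nor Lemma~\ref{l:uniquecont} (which concerns the limit of the \emph{full} $\Sigma^j$, not of individual components) supplies any inequality between $\mu_{k,k+1}(\Lambda^*)$ and $\sum_{\Lambda\subset\Lambda^*}\mu_k(\Lambda)$. Without such an inequality the counting argument does not close, so neither (i) nor (ii) is established.

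The paper proceeds quite differently, by contradiction. It does \emph{not} minimize in $B^{k,k+1}$; instead it concatenates the two minimizing isotopies in the disjoint balls $B_\rho(x_k)$ and $B_\rho(x_{k+1})$ into a single isotopy $\Phi_l$, and studies the varifold limit $W$ (in the big ball $B^{k,k+1}$) of the single offending component $\tilde\Sigma^{j,l_j}$. The assumed failure of the lemma forces $\|W\|\geq\haus^2\res\Gamma^i$ in $B_\rho(x_k)$ but $\|W\|=0$ in $B_\rho(x_{k+1})$, while $\|W\|\leq n_i\haus^2\res\Gamma^i$ everywhere. A continuity argument along the segment $[x_k,x_{k+1}]$ then produces a ball $B_{\rho/2}(z)$ on which $\|W\|$ has exactly \emph{half} the mass of $\Gamma^i$. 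One minimizes \emph{again} in a slightly smaller ball $B_\sigma(z)$ (chosen via coarea so that the boundary lengths of $\tilde\Sigma^{j,l_j}$ and its complement are controlled by roughly $\pi\sigma$ and $(2n_i-1)\pi\sigma$), applies the second part of Proposition~\ref{p:MSYbis}, and invokes the elementary length--area bound of Lemma~\ref{l:monot} to see that the two minimized pieces carry multiplicities $\tilde n$ and $\hat n$ with $\tilde n=0$ and $\hat n\leq n_i-1$. This contradicts $\tilde n+\hat n=n_i$. The crucial new ingredients you are missing are the intermediate--value step to find a ball of non--integer density and the quantitative estimate of Lemma~\ref{l:monot}.
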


The lemma is sufficient to conclude the proof of
the Theorem. Indeed let $\{\Lambda_1, \Lambda_2, \ldots,
\Lambda_k\}$ be the elements of $\mathcal{L} (j,1)$.
Choose a point $y_1$ on $\Lambda_1$ and then a point
$y_2$ lying on an element $\tilde{\Lambda}$
of $\mathcal{L} (j,2)$ such that 
$\Lambda_1\cup \tilde{\Lambda}$ is contained in
a connected component of $\Sigma^j\cap B^{1,2}$. 
We proceed by induction and after $N$ steps
we get a point $y_{N+1}$ in some $\Lambda_k$. After
repeating at most $n_i+1$ times this procedure, we find
two points $y_{lN+1}$ and $y_{rN+1}$ belonging
to the same $\Lambda_s$. Without loss of generality
we discard the first $lN$ points and 
renumber the remaining ones 
so that we start with $y_1$ and end with
$y_{nN+1}= y_1$. Note that $n\leq n_i$. Each pair $y_k$, $y_{k+1}$
can be joined with a path $\gamma_{k,k+1}$
lying on $\Sigma^j$ and contained in a ball of radius $3\rho$,
and the same can be done with a path $\gamma_{nN+1, 1}$
joining $y_{nN+1}$ and $y_1$. Thus, if
we let 
$$
\tilde{\gamma} \;=\; \sum_k \gamma_{k,k+1} + \gamma_{nN+1, 1}
$$
we get a closed curve contained in $\Sigma^j$.

It is easy to show that the curve $\tilde{\gamma}$
is homotopic to $n\gamma$ in $\cup_k B^{k,k+1}$. 
Indeed, for each $sN+r$ fix a path $\eta^{sN+r}:[0,1]\to
B_\rho (x_r)$ with $\eta^{sN+r} (0) = y_{sN+r}$ and
$\eta^{sN+r} (1) = x_r$. Next fix an homotopy
$\zeta^{sN+r}: [0,1]\times [0,1]\to B^{k,k+1}$
with 
\begin{itemize}
\item $\zeta^{sN+r} (0, \cdot)= \gamma_{sN+r, sN+r+1}$,
\item $\zeta^{sN+r} (1, \cdot) = [x_r, x_{r+1}]$,
\item $\zeta^{sN+r} (\cdot, 0) = \eta^{iN+r} (\cdot)$
\item and $\zeta^{sN+r} (\cdot, 1) = \eta^{sN+r+1} (\cdot)$.
\end{itemize}
Joyning the $\zeta^k$'s we easily achieve an homotopy
between $\gamma$ and $\tilde{\gamma}$.
See Figure \ref{f:paths}.
If $\rho$ is chosen sufficiently small, then $\cup_k B^{k,k+1}$
is contained in a retractible tubular neighborhood
of $\Gamma$ and does not intersect $E$. 

\begin{figure}[htbp]
\begin{center}
    \input{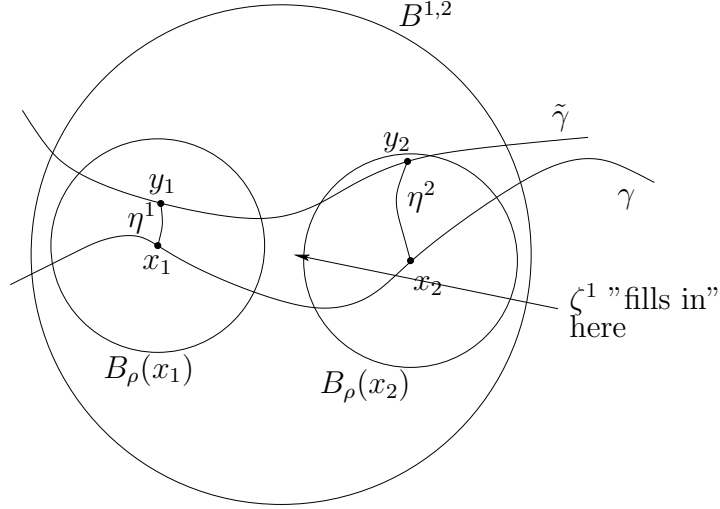}
    \caption{The homotopies $\zeta^{iN+r}$.}
    \label{f:paths}
\end{center}
\end{figure}

\subsection{Step 4. Proof of the Continuation of
the Leaves}

Let us fix a $\rho$ for
which Lemma \ref{l:cont} does not hold. Our goal
is to show that for $\rho$ sufficiently small,
this leads to a contradiction. Clearly,
there is an integer $k$ and a subsequence
$j_l\uparrow \infty$ such that the statement of
the Lemma fails. Without loss of generality we can assume
$k=1$ and we set $x=x_1$, $y=x_2$ and $B^{1,2} = B$. 
Moreover, by a slight abuse of notation
we keep labeling $\Sigma^{j_l}$ as $\Sigma^j$.

Consider the minimizing sequence of isotopies
$\{\psi_l\}$ for Problem $(\Sigma^j, \Is_j (B_\rho (x),
\Sigma^j))$ and $\{\phi_l\}$ for Problem $(\Sigma^j, \Is_j 
(B_\rho (y), \Sigma^j))$ fixed in Step 3.
Since $B_\rho (x)\cap B_\rho (y)
=\emptyset$ and $\psi_l$ and
$\phi_l$ leave, respectively, $M\setminus B_\rho (y)$
and $M\setminus B_\rho (x)$ fixed, we can combine
the two isotopies in
$$
\Phi_l (t,z)\;:=\;\left\{
\begin{array}{ll}
\psi_l (2t, z)  & \mbox{for $t\in [0,1/2]$}\\
\phi_l (2t-1,z) & \mbox{for $t\in [1/2,1]$.}
\end{array}\right.
$$
If we consider $\Sigma^{j,l} = \Phi_l (1, \Sigma^j)$, then
$\Sigma^{j,l}\cap B_\rho (x) = \psi_l (1, \Sigma^j)\cap B_\rho
(x)$ and $\Sigma^{j,l}\cap B_\rho (y) = \phi_l (1, \Sigma^j)
\cap B_\rho (y)$. Moreover for a sufficiently large 
$l$, the surface $\Sigma^{j,l}$ by Lemma \ref{l:stillam}
 is $1/j$--a.m. in $B$ and in sufficiently small annuli.

Arguing as in Step 2 (i.e. 
applying Theorem \ref{t:regularity}, Lemma \ref{l:stillam}
and Lemma \ref{l:uniquecont}), without loss of generality
we can assume that:
\begin{itemize}
\item[(i)] $\Sigma^{j,l}$ converges,
as $l\uparrow\infty$, to smooth minimal surfaces 
$\Delta^j$ and $\Lambda^j$ respectively
in $B_\rho (x)$ and $B_\rho (y)$;
\item[(ii)] $\Delta^j$ and $\Lambda^j$ converge,
respectively, to $n_i \Gamma^i\cap B_\rho (x)$
and $n_i \Gamma^i \cap B_\rho (y)$;
\item[(iii)] For $l_j$ growing sufficiently fast,
$\Sigma^{j,l_j}$ converges to the varifold 
$V = \sum_i n_i \Gamma^i$.
\end{itemize}
Let $\hat{\Sigma}^j$ be the connected component
of $\Sigma^j\cap B_\rho (x)$ which contradicts Lemma
\ref{l:cont}. Denote by $\tilde{\Sigma}^j$ the connected
component of $B\cap \Sigma^j$ containing $\hat{\Sigma}^j$.

Now, by Proposition \ref{p:MSYbis}, 
$\Phi_l (1, \tilde{\Sigma}^j)\cap B_\rho (x)$ 
converges to a stable minimal surface $\tilde{\Delta}^j
\subset \Delta^j$ and $\Phi_l (1, \hat{\Sigma}^j)$
converges to a stable minimal surface $\hat{\Delta}^j\subset
\tilde{\Delta}^j$. Because of (ii) and of curvature estimates
(see Subsection \ref{ss:curvest}),
$\hat{\Delta}^j$ converges necessarily
to $r \Gamma^i \cap B_\rho (x)$ for some integer $r\geq 0$.
Since $\hat{\Sigma}^j\in \mathcal{L} (j,1)$,
it follows that $r\geq 1$. 
Similarly, $\Phi_l (1, \tilde{\Sigma}^j)\cap B_\rho (y)$
converges to a smooth minimal surface $\tilde{\Lambda}^j$
and $\tilde{\Lambda}^j$ converges to $s\Gamma^i\cap
B_\rho (y)$ for some integer $s\geq 0$. Since
$\tilde{\Sigma}^j$ does not contain any element
of $\mathcal{L} (j,2)$, it follows necessarily $s=0$.

Consider now the varifold $W$ which is the limit
in $B$ of  
$\tilde{\Sigma}^{j,l_j} = \Phi_{l_j} (1, \tilde{\Sigma}^j)$.
Arguing again as in Step 2 we choose $\{l_j\}$
growing so fast that $W$, which is the limit of 
$\tilde{\Sigma}^{j, l_j}$,
coincides with the limit of $\tilde{\Delta}^j$ in $B_\rho (x)$
and with the limit of $\tilde{\Lambda}_j$ in $B_\rho (y)$.
According to the discussion above, 
$V$ coincides then with $r\Gamma^i \cap B_\rho (x)$
in $B_\rho (x)$ and vanishes in $B_\rho (y)$. Moreover
\begin{equation}\label{e:ineq}
\|W\|\;\leq\; \|V\|\res B \;=\; n \haus^2\res \Gamma^i\cap B
\end{equation} 
in the sense of varifolds. 
We recall here that $\|W\|$ and $\|V\|\res B$ are nonnegative
measures defined in the following way:
\begin{equation}\label{e:varifold}
\int \varphi (x) d\|W\| (x)
\;=\; \lim_{j\uparrow\infty}
\int_{\tilde{\Sigma}^{j,l_j}} \varphi
\end{equation}
and
\begin{equation}\label{e:varifold1}
\int \varphi (x) d\|V\| (x)
\;=\; \lim_{j\uparrow\infty}
\int_{\Sigma^{j,l_j}} \varphi
\end{equation}
for every $\varphi\in C_c (B)$. Therefore
\eqref{e:ineq} must be understood as a standard inequality
between measures, which is an effect of \eqref{e:varifold},
\eqref{e:varifold1} and the inclusion $\tilde{\Sigma}^{j,l_j}
\subset \Sigma^{j,l_j}\cap B$. An important consequence
of \eqref{e:ineq} is that
\begin{equation}\label{e:zero}
\|W\| (\partial B_\tau (w))\;=\; 0
\qquad\mbox{for every ball $B_\tau (w)\subset B$.}
\end{equation}

Next, consider the geodesic segment $[x,y]$ joining
$x$ and $y$ in $\Gamma^i$. For $z\in [x,y]$,
$B_{\rho/2} (z)\subset B$. Moreover, 
\begin{equation}\label{e:continuity}
\mbox{the map}\quad
z\;\;\mapsto\;\; \|W\| (B_{\rho/2} (z))\quad  
\mbox{is continuous in $z$,}
\end{equation}
because of \eqref{e:ineq} and \eqref{e:zero}.

Since $\|W\| (B_{\rho/2} (x))\geq \haus^2 (\Gamma^i
\cap B_{\rho/2} (x))$
and $\|W\| (B_{\rho/2} (y))=0$, by
the continuity of the map in \eqref{e:continuity},
there exists $z\in [x,y]$ such that 
$$
\|W\| (B_{\rho/2} (z))\;=\;\frac{1}{2}\haus^2 (\Gamma^i
\cap B_{\rho/2} (z))\, .
$$
Since $\|W\| (\partial B_{\rho/2}(z)) =0$, we conclude
(see Proposition 1.62(b) of \cite{AFP}) that
\begin{equation}\label{e:est1}
\lim_{j\uparrow \infty}
\haus^2 (\tilde{\Sigma}^{j,l_j}\cap B_{\rho/2} (z))
\;=\; \frac{1}{2} \haus^2 (\Gamma^i
\cap B_{\rho/2} (z))\, 
\end{equation}
(see Figure \ref{f:W}).

\begin{figure}[htbp]
\begin{center}
    \input{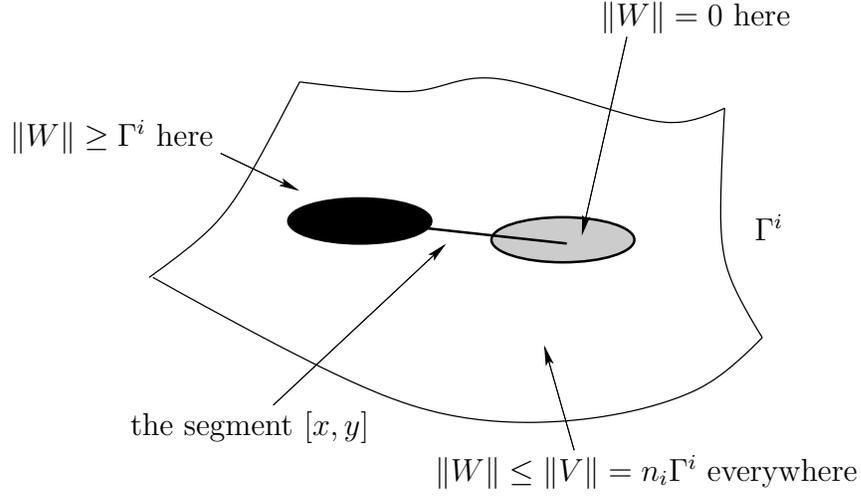}
    \caption{The varifold $W$.}
    \label{f:W}
\end{center}
\end{figure}

On the other hand, since $\Sigma^{j, l_j}$ converges
to $V$ in the sense of varifolds and $V= n_i 
\Gamma^i\cap B_{\rho/2} (z)$ in $B_{\rho/2} (z)$,
we conclude that
\begin{equation}\label{e:est2}
\lim_{j\uparrow \infty}
\haus^2 ((\Sigma^{j,l_j}\setminus 
\tilde{\Sigma}^{j,l_j})\cap B_{\rho/2} (z))
\;=\; \left(n_i-\frac{1}{2}\right) \haus^2 (\Gamma^i
\cap B_{\rho/2} (z))\, .
\end{equation}
If $\rho$ is sufficiently small, $\Gamma^i\cap
B_{\rho/2} (z)$ is close to a flat disk and
$B_{\rho/2} (z)$ is close to a flat ball. 

Using the coarea formula and Sard's lemma, we can
find a $\sigma\in ]0, \rho/2[$ and a subsequence
of $\{\Sigma^{j, l_j}\}$ (not relabeled) with the following
properties:
\begin{itemize}
\item[(a)] $\Sigma^{j, l_j}$ intersects $\partial B_\sigma (z)$
transversally;
\item[(b)] $\Length (\tilde{\Sigma}^{j, l_j}\cap \partial 
B_\sigma (z)) \leq 2 (1/2+\eps) \pi \sigma$;
\item[(c)] $\Length ((\Sigma^{j, l_j}\setminus
\tilde{\Sigma}^{j, l_j})\cap \partial 
B_\sigma (z)) \leq 2 ((n_i-1/2)+\eps)\pi \sigma$;
\item[(d)] $\haus^2 (\Gamma^i \cap B_\sigma (z))\geq (1-\eps)
\pi \sigma^2$.
\end{itemize}
Note that the geometric constant $\eps$ can be made as close
to $0$ as we want by choosing $\rho$ sufficiently small.

In order to simplify the notation, set $\Omega^j =
\Sigma^{j,l_j}$. Consider a minimizing sequence
$\Omega^{j,s} = \varphi_s (1, \Omega^j)$ 
for Problem $(\Omega^j,
\Is_j (B_\sigma (z), \Omega^j))$. By Proposition
\ref{p:MSYbis}, $\Omega^{j,s}\cap B_\sigma (z)$
converges, up to subsequences, 
to a minimal surface $\Xi^j$ with boundary
$\Omega^j\cap \partial B_\sigma (z)$. Moreover, using
Lemma \ref{l:uniquecont} and arguing as in 
the previous steps, we conclude that $\Xi^j$
converges to $n_i\Gamma^i\cap B_\sigma (z)$. 

Next, set:
\begin{itemize}
\item $\tilde{\Omega}^j = 
\tilde{\Sigma}^{j, l_j}\cap B_\sigma (z)$,
$\tilde{\Omega}^{j,s}
= \varphi_s (1, \tilde{\Omega}^j)$;
\item $\hat{\Omega}^j =
(\Sigma^{j,l_j}\setminus 
\tilde{\Sigma}^{j, l_j})\cap B_\sigma (z)$,
$\hat{\Omega}^{j,s}
= \varphi_s (1, \hat{\Omega}^j)$.
\end{itemize}
By Proposition \ref{p:MSYbis}, since $\tilde{\Omega}^j$
and $\hat{\Omega}^j$ are unions of connected components
of $\Omega^j\cap B_\sigma (z)$, we can assume that 
$\tilde{\Omega}^{j,s}$ and 
$\hat{\Omega}^{j,s}$ converge respectively
to stable minimal surfaces
$\tilde{\Xi}^j$ and $\hat{\Xi}^j$
with 
$$
\partial \tilde{\Xi}^j
\;=\; \tilde{\Sigma}^{j,l_j}\cap \partial B_\sigma (z)
\qquad
 \partial \hat{\Xi}^j
\;=\; (\Sigma^{j,l_j}\setminus \tilde{\Sigma}^{j,l_j})
\cap \partial B_\sigma (z)\, .
$$
Hence, by (b) and (c), we have
\begin{equation}\label{e:est10}
\Length (\partial \tilde{\Xi}^j)
\;\leq\; 2 \left(\frac{1}{2}+\eps\right) \pi
\sigma\qquad
\Length (\partial \hat{\Xi}^j)
\;\leq\; 2 \left(n_i-\frac{1}{2}+\eps\right)
\pi \sigma\, .
\end{equation}
On the other hand, using the standard monotonicity
estimate of Lemma \ref{l:monot} below, we conclude that
\begin{equation}\label{e:est11}
\haus^2 (\hat{\Xi}^j) \;\leq\;
\left(n_i - \frac{1}{2} +\eta\right)\pi \sigma^2
\end{equation}
\begin{equation}\label{e:est12}
\haus^2 (\tilde{\Xi}^j) \;\leq\;
\left(\frac{1}{2} +\eta\right)\pi \sigma^2\, .
\end{equation}
As the constant $\eps$ in (d), $\eta$ as well
can be made arbitrarily small by choosing $\rho$
suitably small. We therefore choose $\rho$ so small that
\begin{equation}\label{e:est20}
\haus^2 (\hat{\Xi}^j) \;\leq\;
\left(n_i - \frac{3}{8}\right)\pi \sigma^2\, ,
\end{equation}
\begin{equation}\label{e:est21}
\haus^2 (\tilde{\Xi}^j) \;\leq\;
\frac{5}{8}\pi \sigma^2\, 
\end{equation}
and
\begin{equation}\label{e:est22}
\haus^2 (\Gamma^i\cap B_\sigma (z)) \;\geq\;
\left(1-\frac{1}{8n_i}\right)\pi \sigma^2\, .
\end{equation}

Now, by curvature estimates (see Subsection
\ref{ss:curvest}), we can assume that the
stable minimal surfaces $\tilde{\Xi}^j$ and $\hat{\Xi}^j$,
are converging smoothly (on compact subsets of 
$B_\sigma (z))$ to stable minimal surfaces
$\tilde{\Xi}$ and $\hat{\Xi}$. Since $\Xi^j=
\tilde{\Xi}^j+\hat{\Xi}^j$
converges to $n_i \Gamma^i \cap B_\sigma (z)$,
we conclude that
$\tilde{\Xi} = \tilde{n} \Gamma^i \cap B_\sigma (z)$
and $\hat{\Xi} = \hat{n} \Gamma^i\cap B_\sigma (z)$,
where $\tilde{n}$ and $\hat{n}$ are nonnegative integers
with $\tilde{n}+\hat{n}=n_i$. On the other hand, by
\eqref{e:est20}, \eqref{e:est21} and \eqref{e:est22},
we conclude
\begin{equation}\label{e:est30}
\tilde{n} \left(1-\frac{1}{8n_i}\right)\pi \sigma^2
\;=\; \haus^2 (\tilde{\Xi})
\;\leq\; \liminf_j \haus^2 (\tilde{\Xi}^j)
\;\leq\; \frac{5}{8} \pi \sigma^2
\end{equation}
\begin{equation}\label{e:est31}
\hat{n} \left(1-\frac{1}{8n_i}\right) \pi\sigma^2
\;=\; \haus^2 (\hat{\Xi})
\;\leq\; \liminf_j \haus^2 (\hat{\Xi}^j)
\;\leq\; \left(n_i - \frac{3}{8}\right)\pi\sigma^2\, .
\end{equation}
From \eqref{e:est30} and \eqref{e:est31} we conclude, respectively,
$\tilde{n}=0$ and $\hat{n}\leq n_i-1$,
which contradicts $\tilde{n}+\hat{n} = n_i$.

\subsection{A simple estimate} The following
lemma is a standard fact in the theory of minimal surfaces.

\begin{lemma}\label{l:monot}
There exist constants $C$ and $r_0>0$ (depending only on $M$)
such that
\begin{equation}\label{e:monot}
\haus^2 (\Sigma)
\;\leq\; \left(\frac{1}{2} +C\sigma\right) \sigma \Length\, 
(\partial \Sigma)\, 
\end{equation}
for any $\sigma<r_0$ and for 
any smooth minimal surface $\Sigma$ with boundary $\partial \Sigma
\subset \partial B_\sigma (z)$.
\end{lemma}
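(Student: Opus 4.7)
I would prove Lemma \ref{l:monot} via the first variation formula applied to the radial vector field, which reduces the estimate to a comparison between the Riemannian and Euclidean divergences of the position vector. Concretely, let $X = \tfrac{1}{2}\nabla d_z^2$, where $d_z(x) = \dist(x,z)$, so that $X$ is the radial vector field on $B_{\Inj(M)}(z)$ with $|X(x)| = d_z(x)$. Since $\Sigma$ is minimal with $\partial\Sigma\subset \partial B_\sigma(z)$, the first variation formula gives
\begin{equation*}
\int_\Sigma \dv_\Sigma X\, d\haus^2 \;=\; \int_{\partial \Sigma} \langle X,\nu\rangle\, ds,
\end{equation*}
where $\nu$ is the outward conormal along $\partial\Sigma$.

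The key computation is a lower bound for $\dv_\Sigma X$. Working in geodesic normal coordinates centered at $z$, the metric satisfies $g_{ij}(x) = \delta_{ij} + O(|x|^2)$, and an elementary computation (or the Hessian comparison theorem) gives $\Hess(\tfrac{1}{2}d_z^2) = g + O(|x|^2)$ on a ball of radius less than $\Inj(M)/2$, with the implicit constants depending only on the sectional curvature bounds of $M$. Tracing this Hessian along the $2$-plane $T_x\Sigma$, one obtains
\begin{equation*}
\dv_\Sigma X(x) \;\geq\; 2 - C d_z(x)^2 \;\geq\; 2 - C \sigma^2
\end{equation*}
for every $x\in \Sigma \subset \overline{B}_\sigma(z)$, where $C$ depends only on $M$. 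On the other hand, on $\partial\Sigma \subset \partial B_\sigma(z)$ one has $|X| = \sigma$, hence $\langle X,\nu\rangle \leq \sigma$ pointwise.

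Combining these two estimates yields
\begin{equation*}
(2 - C\sigma^2)\, \haus^2(\Sigma) \;\leq\; \sigma\, \Length(\partial\Sigma),
\end{equation*}
and choosing $r_0$ sufficiently small so that $C\sigma^2 \leq 1$ for $\sigma<r_0$, a simple algebraic rearrangement gives $\haus^2(\Sigma) \leq (\tfrac12 + C'\sigma)\,\sigma\,\Length(\partial\Sigma)$ for a constant $C'$ depending only on $M$. I do not anticipate real obstacles: the proof is essentially the standard monotonicity identity in a Riemannian manifold, and the only ingredient that requires care is the Hessian comparison for $\tfrac12 d_z^2$, which is classical and valid uniformly on compact manifolds within the injectivity radius.
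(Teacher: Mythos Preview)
Your proof is correct and rests on the same core idea as the paper's: plug the radial vector field $X=\tfrac12\nabla d_z^2$ into the first variation formula for the minimal surface $\Sigma$ and use that $\dv_\Sigma X$ is close to $2$, with an error controlled by the curvature of $M$. The only difference is in implementation. The paper tests the stationarity condition against cut-offs $\gamma(r)\,r\nabla r$, follows the standard derivation of the monotonicity formula (as in Simon's lecture notes), and then evaluates the resulting identity at $\rho=\sigma$ via the coarea formula to produce the boundary integral; you instead apply the first variation formula with its boundary term directly, which is legitimate because $X$ is smooth on all of $\overline{B_\sigma(z)}$ and $\Sigma$ has zero mean curvature. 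Your route is shorter, and the Hessian comparison even gives you the slightly better error $O(\sigma^2)$ in place of the paper's $O(\sigma)$, though this refinement is immaterial for the statement being proved.
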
 

Indeed, \eqref{e:monot} follows
from the usual computations leading to the monotonicity formula.
However, since we have not found a reference
for \eqref{e:monot} in the literature, we will sketch
a proof in Appendix \ref{a:monot}.

\section{Proof of Proposition \ref{p:MSYbis}. Part I:
Convex hull property}\label{s:MSY1}

\subsection{Preliminary definitions} Consider an open geodesic ball
$U= B_\rho (\xi)$ with sufficiently small radius $\rho$ and a subset $\gamma\subset
\partial U$ consisting of finitely many disjoint smooth Jordan curves.  



\begin{definition}\label{d:transversal}
We say that an open subset $A\subset U$
meets $\partial U$ in $\gamma$ transversally if there exists a
positive angle $\theta_0$ such that:
\begin{itemize}
\item[(a)] $\partial A\cap \partial U
\subset \gamma$.
\item[(b)] For every $p\in \partial A\cap \partial U$ we choose coordinates $(x,y,z)$
in such a way that the tangent plane $T_p$ of $\partial U$ at $p$ is 
the $xy$-plane and $\gamma'(p)=(1,0,0)$. Then in this setting every point $q=(q_1,q_2,q_3)\in A$ satisfies 
$\frac{q_3}{q_2}\geq \tan (\textstyle{\frac{1}{2}}- \theta_0).$
\end{itemize}
\end{definition}

\begin{remark}
Condition (b) of the above definition can be stated in the following 
geometric way: 
there exixt two halfplanes $\pi_1$ and $\pi_2$ meeting at the line 
through $p$ in direction $\gamma'(p)$ such that 
\begin{itemize}
\item they form an angle $\theta_0$ with $T_p$; 
\item the set $A$ is all contained in the wedge formed by 
$\pi_1$ and $\pi_2$;
\end{itemize}
see Figure \ref{f:wedge}.
 \end{remark}


\begin{figure}[htbp]
\begin{center}
    \input{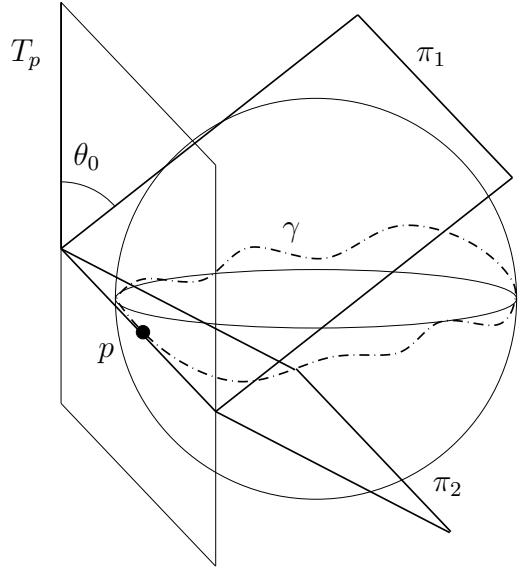}
    \caption{For any $p\in A \cap
\partial U$, $A$ is contained in a wedge delimited by two
halfplanes meeting at $p$ transversally to the plane $T_p$.}  
    \label{f:wedge}
\end{center}
\end{figure}

In this section we will show the following lemma.

\begin{lemma}[Convex hull property]\label{l:convexhull} Let $V$
and $\Sigma$ be as in Proposition \ref{p:MSYbis}. 
Then, there exists a convex open set
$A\subset U$ which intersects $U$ in $\partial \Sigma$ transversally
and such that $\supp (\|V\|)\subset \overline{A}$.
\end{lemma}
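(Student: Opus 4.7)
The plan is to split the proof into a geometric construction of the convex set $A$ and a variational squeezing argument that uses the minimizing character of $\Delta^k$ to force $\supp\|V\|\subset\overline A$.

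\textbf{Construction of $A$.} Working in normal coordinates at $\xi$, the smallness of $\rho$ makes $U$ a perturbation of the Euclidean ball $\B_\rho$ and $\partial U$ a uniformly convex hypersurface. Let $K$ be the closed convex hull of $\gamma$ computed in this chart. Strict convexity of $\partial U$ yields $K\cap\partial U=\gamma$, since a convex combination of points on a strictly convex sphere lies on the sphere only if all the combined points coincide. To enforce the transversality of Definition \ref{d:transversal}, for each $p\in\gamma$ I would pick a supporting halfspace $H(p)\supset K$ whose bounding plane meets $T_p\partial U$ along the line $\RR\gamma'(p)$ and makes an angle at least $\theta_0>0$ with it; the intersection $\bigcap_{p\in\gamma}H(p)$ is convex, contains $K$, and lies in a wedge of angular opening $2\theta_0$ at each $p\in\gamma$. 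Smoothing this intersection away from $\gamma$ and thickening it slightly in the interior of $U$ to give it nonempty interior produces an open convex $A$ with $\partial A\cap\partial U=\gamma$ and uniform transversality, the uniformity of $\theta_0$ coming from compactness and smoothness of $\gamma$ combined with the uniform convexity of $\partial U$.

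\textbf{Squeezing argument.} Pick a smooth strictly convex defining function $f:U\to\RR$ for $A$ (so $A=\{f<0\}$ and $|\nabla f|>0$ on $\{f=0\}$); such an $f$ can be built from a mollification of $\dist(\cdot,A)^2$ plus a small uniformly convex quadratic, the transversality at $\gamma$ ensuring smoothness up to $\partial U\cap\gamma$. For $\delta>0$ let $h_\delta\in C^\infty(\RR)$ be non-negative, non-decreasing, vanishing on $(-\infty,\delta]$ and strictly increasing on $(2\delta,\infty)$, let $\zeta\in C_c^\infty(U)$ equal $1$ on a neighbourhood of $\{f\ge\delta\}\cap\supp\|V\|$, and consider the flow $\Psi^s$ of the vector field $\chi_\delta:=-\zeta\,h_\delta(f)\,\nabla f\in C_c^\infty(U,TU)$. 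Strict convexity of $f$ and the first variation formula give, for every smooth surface $\Delta\subset U$,
\[
\tfrac{d}{ds}\bigl|_{s=0}\haus^2\bigl(\Psi^s(\Delta)\bigr)\;\le\;-\,c_0\,\|\Delta\|\bigl(\{f\ge 2\delta\}\bigr),
\]
for some $c_0>0$, and $\Psi^s$ is area-nonincreasing on every surface contained in $U$.

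\textbf{Contradiction and main obstacle.} If $\supp\|V\|\not\subset\overline A$ there exist $q\in\supp\|V\|$ and $\delta>0$ with $f(q)>3\delta$, so by lower semicontinuity of mass on the open set $\{f>2\delta\}$ one has $\|\Delta^k\|(\{f>2\delta\})\ge m_0>0$ for $k$ large. Concatenate $\varphi^k$ with $\Psi^s$ into the reparametrized isotopy that runs $\varphi^k$ on $[0,\tfrac12]$ and $\Psi^{2s(\tau-1/2)}$ on $[\tfrac12,1]$. Every intermediate slice has area at most $\haus^2(\Sigma)+1/(8j)$: the first half because $\varphi^k\in\Is_j(U,\Sigma)$, the second because $\Psi^s$ is area-nonincreasing along $\Delta^k$ starting from $\haus^2(\Delta^k)\le\haus^2(\Sigma)+1/(8j)$. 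Hence the concatenation lies in $\Is_j(U,\Sigma)$, but for small $s>0$ and $k$ large the area-decrease estimate yields $\haus^2(\Psi^s(\Delta^k))\le\haus^2(\Delta^k)-\tfrac12 c_0 s\, m_0$, contradicting the minimizing property of $\Delta^k$. Therefore $\supp\|V\|\subset\overline A$. The hardest part of the argument is the construction of $A$: arranging that the convex set simultaneously contains $K$, meets $\partial U$ only along $\gamma$, and does so in a uniform wedge is delicate, and it is here that the smallness of $\rho$ (hence the uniform convexity of $\partial U$) is genuinely used; the squeezing step is then a standard manifestation of the convex maximum principle for stationary varifolds.
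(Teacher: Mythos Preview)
Your construction of $A$ follows the same idea as the paper (which invokes Proposition~\ref{p:C^2} and then intersects off-centered balls rather than halfspaces). The gap is in the squeezing step: the claim that $\Psi^s$ is area-nonincreasing on every surface in $U$ is false. The tangential divergence of $\chi_\delta=-\zeta\, h_\delta(f)\,\nabla f$ on a $2$--plane $\pi$ is
\[
\dv_\pi\chi_\delta\;=\;-\zeta\, h_\delta(f)\,\trac_\pi(\Hess f)\;-\;\zeta\, h_\delta'(f)\,|\nabla_\pi f|^2\;-\;h_\delta(f)\,\langle\nabla_\pi\zeta,\nabla_\pi f\rangle\, ,
\]
and while the first two terms are nonpositive, the last has no sign on the transition region of $\zeta$ (where $h_\delta(f)>0$ and $\nabla\zeta\neq 0$). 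A surface passing through that region can gain area under $\Psi^s$, so you cannot guarantee that the concatenated isotopy stays in $\Is_j(U,\Sigma)$, and your displayed first-variation inequality fails for the same reason. There is also a circularity: for $\zeta\in C_c^\infty(U)$ to equal $1$ on a neighbourhood of $\{f\ge\delta\}\cap\supp\|V\|$ you need this set to be compact in $U$, hence that $\supp\|V\|\cap\partial U$ already lies near $\gamma$---but this is part of the conclusion, and nothing prevents varifold mass from accumulating on $\partial U\setminus\gamma$ a priori.

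The paper avoids both issues by replacing the gradient flow with nearest-point projections $\pi_t$ onto a one-parameter family of uniformly convex sets all containing $\gamma$. Such projections are globally $1$--Lipschitz on $\overline U$ (Lemma~\ref{l:technical_projection}), hence genuinely area-nonincreasing on \emph{every} surface with no cutoff needed, and strictly contracting off the target set; since $\gamma$ lies in each target set, the boundary of $\Delta^k$ is automatically fixed. The delicate point is then not the construction of $A$ but the approximation of these merely Lipschitz projections by a smooth isotopy in $\Is(U)$; the paper handles this via an auxiliary smooth foliation by convex hypersurfaces, exploiting that $\overline{\Delta^k}$ meets $\partial U$ only along $\gamma$.
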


Our starting point is the following elementary fact about
convex hulls of smooth curves lying 
in the euclidean two--sphere. 

\begin{propos}\label{p:C^2} If $\beta \subset \partial \mathcal{B}_1
\subset \rn{3}$ is the union of finitely many $C^2-$Jordan curves, then its 
convex hull meets $\mathcal{B}_1$ 
transversally in $\beta$.
\end{propos}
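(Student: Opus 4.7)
I plan to verify directly the two clauses of Definition~\ref{d:transversal}, taking $A := \mathrm{int}(\mathrm{conv}(\beta))$, ambient ball $U = \mathcal{B}_1$, and boundary curves $\gamma = \beta$.

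Clause (a) is the easier half: any $q \in \mathrm{conv}(\beta) \cap \partial \mathcal{B}_1$ is a convex combination of points of $\beta \subset \partial\mathcal{B}_1$, and strict convexity of the euclidean ball forces every point appearing with positive weight to equal $q$, so $q \in \beta$. This gives $\partial A\cap\partial\mathcal{B}_1 = \beta$.

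For clause (b), I would fix $p \in \beta$ and choose coordinates so that $p$ is the origin, $T_p$ is the $xy$-plane, $\mathcal{B}_1$ lies locally in $\{z \le 0\}$, and $\gamma'(p) = (1,0,0)$. The wedge condition reduces to producing $\mu_p > 0$ such that every $q \in \beta$ satisfies $-z(q) \ge \mu_p\,|y(q)|$; indeed the set $\{-z \ge \mu_p |y|\}$ is the intersection of two closed half-spaces through the line $\ell_p$, hence is convex and passes automatically from $\beta$ to the whole convex hull. To find $\mu_p$ I would Taylor-expand $\beta$ in arc-length near $p$ as $\beta(s) = (s, g(s), h(s))$, where the sphere equation forces $h(s) = -\tfrac{1}{2}s^2 + O(s^3)$ and $C^2$ regularity gives $g(s) = \tfrac{1}{2} g''(0)\, s^2 + O(s^3)$. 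Consequently the ratio $-z/|y|$ along $\beta$ tends to $1/|g''(0)| \in (0, +\infty]$ as $s\to 0$. Away from $p$, the numerator $-z(q)$ is bounded below by a positive constant because $p$ is the only point of the upper half-sphere that lies on $T_p$; hence the ratio, extended to $+\infty$ where $y$ vanishes, is lower semicontinuous on the compact set $\beta$ with strictly positive values, and attains a positive infimum $\mu_p$.

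The step I expect to demand the most care is this second-order analysis together with upgrading $\mu_p$ to a constant independent of $p$. For the first point, one must carefully track the sphere error terms to ensure that the ratio does not collapse when $g''(0)$ vanishes --- the expansion then yields $|y| = O(s^3)$ against $|z| \sim s^2$, so the ratio in fact diverges. For the second, I would verify that $\mu_p$ varies continuously in $p \in \beta$, since the adapted coordinate frame, the scalar $g''(0)$ (a component of the acceleration of $\beta$), and the global picture of $\beta$ in those coordinates all depend continuously on $p$. A final application of compactness of $\beta$ then gives $\mu_0 := \inf_{p \in \beta}\mu_p > 0$, and the required uniform angle $\theta_0$ is read off from $\mu_0$.
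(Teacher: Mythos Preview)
Your proposal is correct and is precisely the kind of argument the paper intends: the authors give no detailed proof, stating only that the result ``follows from the regularity and the compactness of $\beta$ and from the fact that $\beta$ is not self-intersecting'' and leaving the details to the reader. Your clause~(a) via strict convexity, the local $C^2$ Taylor analysis yielding the limiting ratio $1/|g''(0)|$, the positivity of $-z$ off the tangent point, and the final compactness sweep to make $\theta_0$ uniform are exactly the ingredients that sentence is pointing at.
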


The proof of this proposition follows from the regularity and the compactness 
of $\beta$ and from the fact that $\beta$ is not self-intersecting. 
We leave its details to the reader.






\subsection{Proof of Lemma \ref{l:convexhull}}
From now on, we consider $\gamma = \partial \Sigma$:
this is the union of finitely many disjoint smooth
Jordan curves contained in $\partial U$. Recall
that $U$ is a geodesic ball $B_\rho (\xi)$. 
Without loss of generality we assume that $\rho$ is
smaller than the injectivity radius.

\medskip

{\bf Step 1} Consider the rescaled exponential coordinates
induced by the chart $f:\overline{B}_\rho (\xi)\to
\overline{\B}_1$ given by $f (z)=(\exp_\xi^{-1} (z))/\rho$. These
coordinates will be denoted by $(x_1, x_2, x_3)$.
We apply Proposition \ref{p:C^2} and consider the
convex hull $B$ of $\beta = f (\partial \Sigma)$ in $\mathcal{B}_1$. According
to our definition, $f^{-1} (B)$ meets $U$ transversally 
in $\gamma$. 

We now let $\theta_0$ be a positive angle
such that condition (b) in Definition \ref{d:transversal}
is fulfilled for $B$. 
Next we fix a point $x\in f (\gamma)$ and consider
consider the halfplanes $\pi_1$ and $\pi_2$ delimiting
the wedge of condition (b). Without loss of generality,
we can assume that the coordinates are
chosen so that $\pi_1$ is given by 
$$
\pi_1 = \{(z_1, z_2, z
_3): z_3\leq
a\}
$$ 
for some positive constant $a$. Condition (b) ensures
that $a\leq a_0<1$ for some constant $a_0$ inpendent of
the point $x\in f (\gamma)$.

For $t\in ]0, \infty[$ denote by $C_t$ the points $C_t := \{(0,0,
-t)\}$ and by $r(t)$ the positive real numbers
$$
r(t) \;:=\;\sqrt{1+t^2+2at}
$$
We finally denote by $R_t$ the closed balls
$$
R_t\;:=\; \overline{\mathcal{B}}_{r(t)} (C_t)\, .
$$
The centers $C_t$ and the radii $r (t)$ are chosen in such
a way that the intersection of the sphere
$\partial R_t$ and $\partial \mathcal{B}_1$
is always the circle $\pi_1 \cap \partial \mathcal{B}_1$.

\begin{figure}[htbp]
\begin{center}
    \input{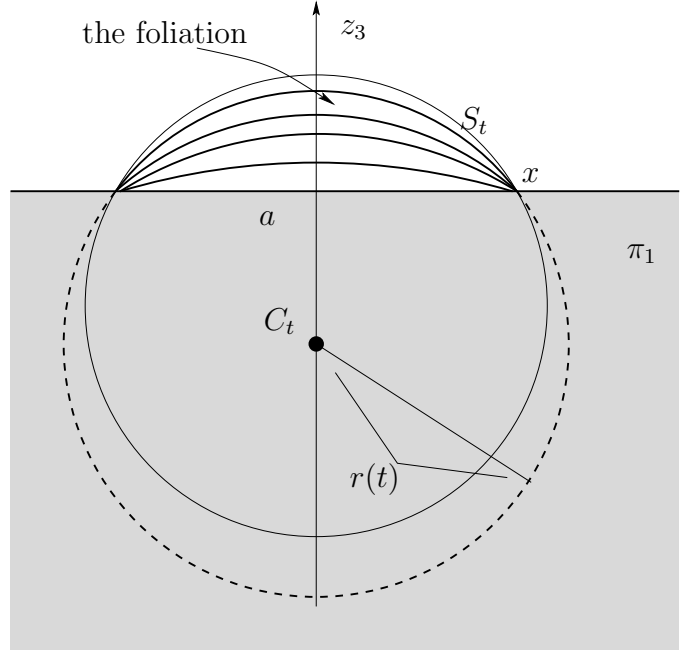}
    \caption{A planar cross-section of the foliation 
$\{S_t : t\in ]0,\infty[\}$.}
    \label{f:foliation}
\end{center}
\end{figure}

Note, moreover, that for $t$ coverging to $+\infty$, the
ball $R_t$ converges towards the region
$\{z_3\leq a\}$.
Therefore, the region $\{z_3>a\} \cap \mathcal{B}_1$ is foliated  with the caps
$$
S_t \;:=\; \partial R_t \cap \mathcal{B}_1\, \qquad \mbox{for
$t\in ]0, \infty[$}.
$$
In Figure \ref{f:foliation}, we see a section of this foliation
with the plane $z_2z_3$. 

\medskip

We claim that, for some constant $t_0>0$ independent
of the choice of the point $x\in f(\gamma)$,
the varifold $V$ is supported in $f^{-1} (R_{t_0})$. 
A symmetric procedure can be followed starting from
the plane $\pi_2$. In this way we find two 
off-centered balls and hence a corresponding wedge
$W_x$ satisfying condition (b) of Definition \ref{d:transversal}
and containing the support of $V$; see Picture \ref{f:13}. 
Our claim that the constant $t_0$ can be chosen independently 
of $x$ and the bound $a\leq a_0 <1$ imply that the
the planes delimiting the wedge $W_x$ form an angle larger 
than some fixed constant with
the plane $T_x$ tangent to $\partial \mathcal{B}_1$ at $x$.
Therefore, the intersections of all the wedges $W_x$, for $x$ varying
among the points of $\gamma$, yield the desired set $A$. 

\begin{figure}[htbp]
\begin{center}
    \input{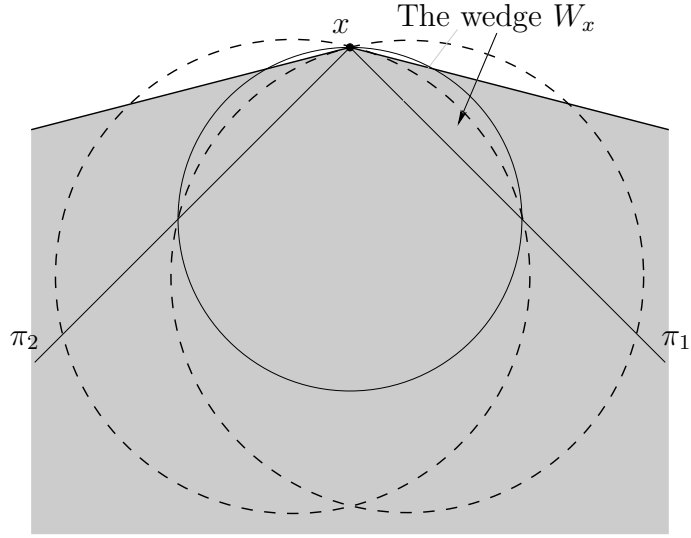}
    \caption{A planar cross-section of the wedge $W_x$.}
    \label{f:13}
\end{center}
\end{figure}

\medskip

{\bf Step 2} We next want to show that the varifold $V$ is supported
in the closed ball $f^{-1} (R_{t_0})$. For any $t\in [0, t_0[$,
denote by $\pi_t: \overline{U} \to f^{-1} (R_{t})$ 
the nearest point projection.
If the radius $\rho_0$ of $U$ and the parameter $t_0$ 
are both sufficiently small, then $\pi_t$ is a well defined 
Lipschitz map 
(because there exists a unique nearest point). Moreover,
the Lipschitz constant of $\pi_t$ is equal to $1$ and,
for $t>0$, $|\nabla \pi_t|<1$ on $U\setminus f^{-1} (R_t)$.
In fact the following lemma holds.

\begin{lemma}\label{l:technical_projection}
Consider in the euclidean ball $\mathcal{B}_1$ a set $U$
that is uniformly convex, with constant $c_0$. Then there
is a $\rho (c_0)>0$ such that, if 
$\rho_0\leq \rho (c_0)$, then the nearest point projection
$\pi$ on $\overline{f (U)}$ is a Lipschitz map with constant
$1$. Moreover, at every point $P\not\in \overline{f(U)}$,
$|\nabla \pi (P)|<1$.
\end{lemma}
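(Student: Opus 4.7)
The lemma reduces to two classical facts from convex analysis applied to the convex compact subset in question: (i) the nearest point projection onto a convex compact subset of Euclidean space is well defined and $1$--Lipschitz; and (ii) if the target set is in addition uniformly convex with $C^2$ boundary, the projection strictly contracts at every exterior point. The plan is to first transfer the uniform convexity hypothesis into the Euclidean setting where these facts apply, and then carry out the two steps quantitatively.

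First I would check that, for $\rho_0 \leq \rho(c_0)$ small enough, the relevant target set is uniformly convex in $\overline{\mathcal{B}}_1$ with Euclidean second fundamental form bounded below by, say, $c_0/2$. In the rescaled normal coordinates used to define $f$, the pulled--back metric on $\mathcal{B}_1$ has the form $g_{ij} = \delta_{ij} + O(\rho_0^2)$ with Christoffel symbols of order $\rho_0^2$, so Euclidean and Riemannian second fundamental forms differ by terms of order $\rho_0^2$; uniform convexity therefore survives the chart with a quantitative constant.

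The $1$--Lipschitz half of the statement is then classical: the quadratic $y\mapsto |y-P|^2$ is strictly convex, so it has a unique minimiser on any convex compact set, yielding $\pi$ well defined on $\mathcal{B}_1$; the two variational inequalities $\langle P-\pi(P),\pi(Q)-\pi(P)\rangle\leq 0$ and $\langle Q-\pi(Q),\pi(P)-\pi(Q)\rangle\leq 0$ combine to give $|\pi(P)-\pi(Q)|^2\leq \langle P-Q,\pi(P)-\pi(Q)\rangle\leq |P-Q|\,|\pi(P)-\pi(Q)|$.

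For the strict inequality $|\nabla\pi(P)|<1$ at $P\notin\overline{f(U)}$, I would compute $D\pi(P)$ by hand. Set $P_0:=\pi(P)$, $d:=|P-P_0|>0$, and rotate coordinates so that $P_0=0$ and the outward unit normal at $P_0$ is $e_3$; locally one writes the boundary as $\{(y',y_3):y_3=-h(y')\}$ with $h$ convex, $h(0)=0$, $\nabla h(0)=0$, and $D^2 h(0)\geq (c_0/2)\,I$. Perturbing to $P_\varepsilon=P+\varepsilon v$ with $v=(v',v_3)$, the first--order optimality condition $2(y'-\varepsilon v')+2(h(y')+d+\varepsilon v_3)\nabla h(y')=0$ at the minimiser $y'=\varepsilon w'+O(\varepsilon^2)$ linearises to $(I+d\,D^2 h(0))w'=v'$. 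Since normal perturbations of $P$ do not move $\pi(P_\varepsilon)$ at first order, this yields the operator bound $|D\pi(P)|\leq (1+dc_0/2)^{-1}<1$. The only genuinely delicate point is the first step, namely quantifying how uniform convexity is transferred through the exponential chart; the remaining steps are routine convex analysis.
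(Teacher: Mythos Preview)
Your argument is correct but takes a different route from the paper's, and there is one point where your interpretation diverges from what the paper actually proves.

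The paper's proof works with the \emph{Riemannian} nearest-point projection throughout. It does not compute $D\pi$ directly; instead it observes that the Euclidean distance $d_e$ to the uniformly convex set is convex on $\overline{\mathcal{B}_1\setminus U}$, transfers this to the geodesic distance $d$ for $\rho_0$ small, realises $\pi$ as the endpoint of the normalised gradient flow $\dot y=-\nabla d(y)/|\nabla d(y)|^2$, and then invokes an external result (Lemma~1 of Bangert) on gradient flows of convex functions to conclude $|\nabla\pi|<1$ outside the set. The $1$--Lipschitz bound then follows because $\pi$ is the identity on the set.

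Your approach is more elementary and self-contained: you transfer uniform convexity of the \emph{set} (rather than of the distance function) through the chart, quote the variational-inequality proof of $1$--Lipschitzness, and compute $D\pi$ explicitly via the implicit function theorem to get the sharp bound $|D\pi|\le(1+d\,c_0/2)^{-1}$. This avoids the Bangert reference entirely.

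The one point to watch: your computation is carried out for the \emph{Euclidean} nearest-point projection in the chart, whereas the lemma as used in the paper concerns the Riemannian projection (it is applied to show that $(\pi_{t_0})_\#V$ has strictly less Riemannian mass). Euclidean $1$--Lipschitz does not literally give Riemannian $1$--Lipschitz. Either repeat your implicit-function computation with the $g$-distance in place of $|\cdot|$---the formula for $D\pi$ then picks up $O(\rho_0^2)$ corrections that are harmlessly absorbed into the strict bound $(1+d\,c_0/2)^{-1}$---or simply note that your quantitative contraction together with $g_{ij}=\delta_{ij}+O(\rho_0^2)$ still forces strict Riemannian contraction at exterior points, which is all the application in Step~2 of Section~\ref{s:MSY1} actually needs.
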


The proof is elementary and we give it in Appendix \ref{a:proj}
for the reader's convenience. Next, it is obvious that
$\pi_0$ is the identity map and that the map
$(t,x)\mapsto \pi_t (x)$ is smooth.

Assume now for a contradiction that $V$ is not supported in 
$f^{-1} (R_{t_0})$.
By Lemma \ref{l:technical_projection}, 
the varifold $(\pi_{t_0})_{\#} V$ has, therefore, strictly less
mass than the varifold $V$. 

Next, consider a minimizing sequence $\Delta^k$ as 
in the statement
of proposition \ref{p:MSYbis}. Since $\partial \Delta^k = \partial \Sigma$,
the intersection of $\overline{\Delta^k}$ with $\partial U$ 
is given by $\partial \Sigma$. On the other hand,
by construction $\partial \Sigma \subset f^{-1} (R_t)$ and
therefore, if we consider $\Delta^k_t := (\pi_t)_\# \Delta^k$ we obtain
a (continuous) one-parameter family of currents with the properties that
\begin{itemize}
\item[(i)] $\partial \Delta^k_t = \partial \Sigma$;
\item[(ii)] $\Delta^k_0 = \Delta_0$;
\item[(iii)] The mass of $\Delta^k_t$ is less or equal than $\haus^2 (\Delta^k)$;
\item[(iv)] The mass of $\Delta^k_{t_0}$ converges towards the mass
of $(\pi_{t_0})_\# V$ and hence, for $k$ large
enough, it is strictly smaller than the
mass of $V$.
\end{itemize}

Therefore, if we fix a sufficiently large number $k$, we can assume 
that (iv) holds with a gain in mass of a positive amount $\eps=1/j$.
We can, moreover, assume that $\haus^2 (\Delta^k) \leq \haus^2 (\Sigma)
+ 1/(8j)$. 
By an approximation procedure, it is possible to replace
the family of projections $\{\pi_t\}_{t\in [0, t_0]}$ with a smooth
isotopy $\{\psi_t\}_{t\in [0,1]}$ with the following properties:
\begin{itemize}
\item[(v)] $\psi_0$ is the identity map and $\psi_t|_{\partial U}$
is the identity map for every $t\in [0, 1]$;
\item[(vi)] $\haus^2 (\Delta^k) \leq \haus^2 (\psi_t 
(\Sigma)) +1/(8j)$;
\item[(vii)] $\haus^2 (\psi_1 (\Delta^k))
\leq {\bf M} ((\pi_{t_0})_\# V) -1/j$.
\end{itemize}
This contradicts the $1/j$--almost minimizing property of $\Sigma$.

\medskip

In showing the existence of the family of isotopies $\psi_t$,
a detail must be taken into account: the map $\pi_t$
is smooth everywhere on $\overline{U}$ but on the circle 
$f^{-1} (R_t)\cap \partial U$ (which is the same circle for every $t$!).
We briefly indicate here a procedure to construct $\psi_t$, skipping
the cumbersome details. 
 
We replace the sets $\{R_t\}$ with a new family $\mathcal{R}_t$ which have
the following properties:
\begin{itemize}
\item $\mathcal{R}_0 = \overline{\mathcal{B}}_1$;
\item $\mathcal{R}_{t_0} = R_{t_0}$;
\item For $t\in [0, t_0]$ the boundaries $\partial \mathcal{R}_t$
are uniformly convex;
\item $\partial \mathcal{R}_t \cap \partial \mathcal{B}_1
= R_t \cap \partial \mathcal{B}_1$;
\item The boundaries of $\partial \mathcal{R}_t$ are smooth 
for $t\in [0, t_0[$ and form a smooth foliation of $\mathcal{B}_1 (0)
\setminus R_{t_0}$.
\end{itemize}
The properties of the new sets are illustrated in Figure 
\ref{f:tocca}

\begin{figure}[htbp]
\begin{center}
    \input{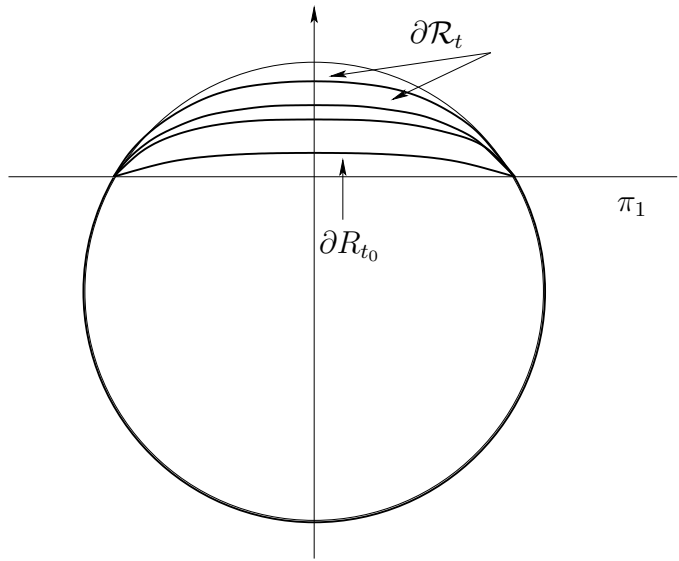}
    \caption{A planar cross-section of the new foliation.}
    \label{f:tocca}
\end{center}
\end{figure}

Since $\overline{\Delta^k}$ touches $\partial U$ in $\partial \Sigma$
transversally
and $\partial \Sigma\subset f^{-1} (\mathcal{R}_t)$ for every $t$,
we conclude the existence of a small $\delta$ such that 
$\Delta^k \subset f^{-1} (\mathcal{R}_{2\delta})$. Moreover, for
$\delta$ sufficiently small, the nearest point projection
$\tilde{\pi}_{t_0-\delta}$ on 
$f^{-1} (\mathcal{R}_{t_0-\delta})$ is so close to $\pi_{t_0}$
that 
$$
{\bf M} ((\tilde{\pi}_{t_0-\delta})_\# \Delta^k)
\;\leq\; {\bf M} ((\pi_{t_0})_\# \Delta^k) + \eps/4\, .
$$

We then construct $\psi_t$ in the following way. We fix
a smooth increasing bijective function $\tau: [0,1]\to [\delta, t_0-\delta]$,
\begin{itemize}
\item $\psi_t$ is the identity on $\overline{U}\setminus \mathcal{R}_\delta$
and on $\mathcal{R}_{\tau (t)}$;
\item On $\mathcal{R}_\delta\setminus \mathcal{R}_{\tau (t)}$
it is very close to the projection $\tilde{\pi}_{\tau (t)}$
on $\mathcal{R}_{\tau (t)}$.
\end{itemize}
In particular, for this last step, we fix for a smoooth function
$\sigma: [0,1]\times [0,1]$ such that, for each $t$,
$\sigma (t, \cdot)$ is a smooth bijection between $[0,1]$
and $[\delta, \tau (t)]$ very close to the function
which is identically $\tau (t)$ on $[0, 1]$.
Then, for $s\in [0,1]$, we define $\psi_t$ on the surface 
$\partial\mathcal{R}_{(1-s)\delta + s \tau (t)}$ to be
the nearest point projection on the surface
$\partial\mathcal{R}_{\sigma (t, s)}$. So, $\psi_t$
fixes the leave $\partial \mathcal{R}_\delta$ but
moves most of the leaves between $\partial \mathcal{R}_\delta$
and $\partial \mathcal{R}_{\tau (t)}$ towards 
$\partial \mathcal{R}_{\tau (t)}$. This completes the proof of Lemma \ref{l:convexhull}.

\section{Proof of Proposition \ref{p:MSYbis}. Part II:
Squeezing Lemma}\label{s:MSY2} In this section we prove the
following Lemma.

\begin{lemma}[Squeezing Lemma]\label{l:squeeze}
Let $\{\Delta^k\}$ be as in Proposition \ref{p:MSYbis}, $x\in
\overline{U}$ and $\beta>0$ be given. Then there exists an
$\eps_0>0$ and a $K\in \N$ with the following property. If $k\geq K$
and $\varphi\in \Is (B_{\eps_0} (x)\cap U)$ is such that $\haus^2
(\varphi (1, \Delta^k))\leq \haus^2 (\Delta^k)$, then 
there exists a $\Phi\in \Is (B_{\eps_0} (x)\cap U)$ such that
\begin{equation}\label{e:finale}
\Phi (1, \cdot)\;=\; \varphi (1, \cdot)
\end{equation}
\begin{equation}\label{e:durante}
\haus^2 (\Phi (t, \Delta^k))\;\leq\; \haus^2 (\Delta^k) +
\beta\qquad \mbox{for every $t\in [0,1]$.}
\end{equation}
\end{lemma}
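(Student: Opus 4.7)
\emph{Reduction via monotonicity.} The plan is to use monotonicity of mass for the stationary limit $V$ to confine the problem to a small ball where $\Delta^k$ has negligible area. Since $V$ is stationary, the monotonicity formula \eqref{e:MonFor} gives $\|V\|(B_r(x)) \to 0$ as $r \to 0$. I would choose $\eps_0$ so small that $\|V\|(B_{\eps_0}(x)) < \beta/8$, selecting $\eps_0$ (from a full measure set of radii) so that $\|V\|(\partial B_{\eps_0}(x)) = 0$. The varifold convergence $\Delta^k \to V$ then yields $K \in \N$ with $\haus^2(\Delta^k \cap B_{\eps_0}(x)) < \beta/4$ for all $k \geq K$. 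Because $\varphi$ is supported in $B_{\eps_0}(x)\cap U$, the area of any isotopy $\Phi$ supported there splits as
\[
\haus^2(\Phi(t,\Delta^k)) = \haus^2(\Delta^k\setminus B_{\eps_0}(x)) + \haus^2(\Phi(t,\Delta^k)\cap B_{\eps_0}(x)),
\]
so it suffices to control the intersection term by $\haus^2(\Delta^k\cap B_{\eps_0}(x)) + \beta$. In particular, since $\haus^2(\varphi(1,\Delta^k)) \leq \haus^2(\Delta^k)$, the endpoint area inside the ball is also $< \beta/4$.

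\emph{Squeeze--act--unsqueeze construction.} I would then construct $\Phi$ as a concatenation of three smooth stages. Pick a smooth one-parameter family $\{\psi_s\}_{s\in[0,1]} \subset \Is(B_{\eps_0}(x)\cap U)$ with $\psi_0 = \mathrm{id}$ and with $\psi_1$ a radially symmetric contraction that maps $B_{\eps_0/2}(x)$ into a very small ball $B_\delta(x)$, remaining the identity on a neighborhood of $\partial B_{\eps_0}(x)$. Define
\[
\Phi(t,y) = \begin{cases}
\psi_{3t}(y), & t\in[0,\tfrac{1}{3}],\\
\psi_1(\varphi(3(t-\tfrac{1}{3}),y)), & t\in[\tfrac{1}{3},\tfrac{2}{3}],\\
\psi_{3(1-t)}(\varphi(1,y)), & t\in[\tfrac{2}{3},1].
\end{cases}
\]
The concatenation is continuous at the junctions and satisfies $\Phi(0,\cdot) = \mathrm{id}$, $\Phi(1,\cdot) = \varphi(1,\cdot)$, so after a standard smoothing near the gluing times it is a genuine element of $\Is(B_{\eps_0}(x)\cap U)$. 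Stages 1 and 3 involve $\psi_s$ applied to $\Delta^k$ and to $\varphi(1,\Delta^k)$ respectively, and the area bound in these stages follows from the fact that $\psi_s$ has bounded Jacobian on a surface of area $< \beta/4$, provided $\psi_s$ is chosen with distortion factor close to $1$ outside a neighborhood of $x$.

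\emph{Main obstacle.} The genuinely delicate step is stage 2, where $\Phi(t,\Delta^k) = \psi_1(\varphi(u,\Delta^k))$ for some intermediate $u$, and the surfaces $\varphi(u,\Delta^k)\cap B_{\eps_0}(x)$ may a priori have arbitrarily large area. The role of the sharp contraction $\psi_1$ is precisely to compress these into a configuration contained in the tiny ball $B_\delta(x)$; the bound $\haus^2(\psi_1(\varphi(u,\Delta^k))\cap B_{\eps_0/2}(x)) \lesssim (\delta/\eps_0)^2\cdot \haus^2(\varphi(u,\Delta^k)\cap B_{\eps_0/2}(x))$ would suffice if this were a uniform scaling, but since $\psi_1$ must be a diffeomorphism fixing $\partial B_{\eps_0}(x)$ it cannot be globally contracting and must expand in the annulus $B_{\eps_0}(x)\setminus B_{\eps_0/2}(x)$. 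The resolution I would pursue is to choose $\psi_s$ to equal the identity on a neighborhood $B_{\eps_0}(x)\setminus B_{(1-\theta)\eps_0}(x)$ (taking $\theta$ small) and to ensure that $\varphi$ itself is identity there as well, so that the expanding region carries no new surface. A coarea and Sard argument on radii close to $\eps_0$, picking out a level set with controlled length $\lesssim \eta/\eps_0$ analogous to \eqref{e:svanisce5}, would let one arrange that $\varphi(u,\Delta^k)$ is close to $\Delta^k$ near $\partial B_{\eps_0}(x)$ and thus absorbed harmlessly by the identity part of $\psi_s$. The final verification reduces to a bookkeeping of Jacobians and the distortion estimate for radial scalings, with the remaining slack of order $\beta - \beta/4 = 3\beta/4$ being ample once $\delta$ is chosen small enough depending on $\beta$ and $\eps_0$.
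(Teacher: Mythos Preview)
Your three-stage squeeze--act--unsqueeze skeleton is exactly the one the paper uses, but your justification of Stage~1 (and symmetrically Stage~3) has a real gap, and your proposed fix does not close it.

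The issue is this. You want a contracting diffeomorphism $\psi_1$ that is the identity on an outer shell $B_{\eps_0}(x)\setminus B_{(1-\theta)\eps_0}(x)$ and squeezes the support of $\varphi$ into a tiny ball. Since $\varphi$ is \emph{given} and may be supported on almost all of $B_{\eps_0}(x)\cap U$, the width $\theta$ is forced on you by $\varphi$ and can be arbitrarily small. In the transition annulus the radial derivative of $\psi_1$ is then of order $1/\theta$, so the $2$--dimensional Jacobian on tangent planes with a radial component is also of order $1/\theta$. Your claim that ``$\psi_s$ has bounded Jacobian'' and ``distortion factor close to~$1$ outside a neighborhood of $x$'' is therefore false: the distortion is large precisely in the thin shell you must pass through. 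Knowing only that $\haus^2(\Delta^k\cap B_{\eps_0}(x))<\beta/4$ does not prevent a sizable fraction of that area from sitting in the bad annulus and being magnified by $1/\theta$. A pigeonhole/coarea choice of a good annulus is unavailable here because $\theta$ is dictated by $\varphi$; and even if you tried to use monotonicity to bound $\haus^2(\Delta^k\cap A_\theta)\lesssim\theta\eps_0^2$, the threshold $K$ after which this holds would depend on $\theta$ and hence on $\varphi$, contradicting the order of quantifiers in the lemma ($\eps_0$ and $K$ must be chosen before $\varphi$).

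What the paper does instead is to exploit that $V$ has a \emph{tangent cone} at $x$: along a sequence of scales $\rho_l\downarrow 0$ the rescalings $V_{x,\rho_l}$ converge to a cone $W$, and by a diagonal argument the surfaces $\Delta^k$ themselves are, for $k\geq K_l$, close to this cone in $B_{\rho_l}(x)$. The squeezing map $\Psi(t,\cdot)$ is a radial contraction toward a point, and such maps never increase the area of a cone (the area element $s\,ds$ along rays transforms to $\alpha(s)\alpha'(s)\,ds$ with $\alpha(s)\leq s$). Hence for $\Delta^k$ close to a cone one gets the additive estimate $\haus^2_e(\Psi(t,G_\eps(\Delta^k)))\leq\haus^2_e(G_\eps(\Delta^k))+\delta$ uniformly in $t$ \emph{and uniformly in the squeezing parameter} $\eta$. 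This last uniformity is exactly what lets $\eps_0$ and $K$ be fixed independently of $\varphi$. For boundary points $x\in\partial U$ the existence of the tangent cone is not automatic from interior stationarity; the paper invokes Allard's boundary monotonicity, which in turn requires the convex hull property of Lemma~\ref{l:convexhull}---another ingredient your reduction via the interior monotonicity formula does not provide.
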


If $x$ is an interior point of $U$, this lemma reduces to Lemma 7.6
of \cite{CD}. When $x$ is on the boundary of $U$, one can argue in a
similar way (cp. with Section 7.4 of \cite{CD}). Indeed, the proof
of Lemma 7.6 of \cite{CD} relies on the fact that, when $\eps$ is
sufficiently small, the varifold $V$ is close to a cone. For
interior points, this follows from the stationarity of the varifold
$V$. For points at the boundary this,
thanks to a result of Allard (see \cite{All2}), is a consequence 
of the stationarity of $V$ and of the convex hull property of
Lemma \ref{l:convexhull}.

\subsection{Tangent cones}\label{ss:tcones} 
Consider the varifold $V$ of
Proposition \ref{p:MSYbis}. Given a point $x\in \overline{U}$
and a radius $\rho>0$, consider the chart 
$f_{x,\rho} : B_\rho (x)\to \mathcal{B}_1$ given by
$f_{x,\rho} (y) = \exp_x^{-1} (y)/\rho$. 
We then consider the varifolds $V_{x,\rho} := (f_{x,\rho})_\# V$.
Moreover, if $\lambda>0$, we will denote by $O_\lambda:
\rn{3}\to \rn{3}$ the rescaling $O_\lambda (x) = x/\lambda$.

If $x\in U$, the monotonicity formula and a
compactness result (see Theorem 19.3 of \cite{Si}) imply that, 
for any $\rho_j\downarrow 0$, there
exists a subsequence, not relabeled, such that
$V_{x, \rho_j}$ converges to an integer rectifiable 
varifold $W$ supported in $\mathcal{B}_1$ with the property
that $(O_\lambda)_\# W \res B_1 (0) = W$ for any $\lambda<1$. 
The varifolds $W$ which are limit of subsequences $V_{x, \rho_j}$
are called tangent cones to $V$ at $x$. The monotonicity formula
implies that the mass of each $W$ is a positive
constant $\theta (x, V)$ independent of $W$ (see again Theorem 19.3 of \cite{Si}).

If $x\in \partial U$, we fix coordinates $y_1, y_2, y_3$
in $\R^3$ in such
a way that $f_{x, \rho} (U\cap B_\rho (x))$ converges to
the half-ball $\mathcal{B}_1^+ = \mathcal{B}_1 \cap \{y_1>0\}$.

Recalling Lemma \ref{l:convexhull}, 
we can infer with the monotonicity formula of Allard for
points at the boundary (see 3.4 of \cite{All2}) 
that $V_{x, \rho} = (f_{x, \rho})_\# V$ 
have equibounded mass. Therefore, if $\rho_j\downarrow 0$,
a subsequence of $V_{x, \rho_j}$, not relabeled, converges
to a varifold $W$.

By Lemma \ref{l:convexhull}, there
is a positive angle $\theta_0$ such that, after
a suitable change of coordinates, $W$ is supported
in the set
$$
\{|y_2| \leq y_1 \tan \theta_0\}\, .
$$
Therefore $\supp (W)\cap \{y_1=0\}= \{(0,0,t): t\in [-1,1]\}
=: \ell$. Applying the monotonocity formula of 3.4 of
\cite{All2}, we conclude that
\begin{equation}\label{e:noncarica}
\|W\| (\ell) \;=\; 0\, 
\end{equation}
and
\begin{equation}\label{e:massariscala}
\|W\| (\mathcal{B}_\rho (0)) \;=\; \pi \theta (\|V\|, x) \rho^2\, ,
\end{equation}
where
$$
\theta (\|V\|, x) \;=\; \lim_{r\downarrow 0} \frac{\|V\| (B_\rho (x))}{\pi \rho^2}
$$
is independent of $W$. Being $W$ the limit of a sequence $V_{x, \rho_j}$ with
$\rho_j\downarrow 0$, we conclude that $W$ is a stationary varifold. 

Now, define the reflection map $r: \rn{3}\to \rn{3}$ given by
$r (z_1, z_2, z_3) = (-z_1, -z_2, z_3)$. 
By
\eqref{e:noncarica}, using the reflection principle of 3.2 of \cite{All2},
the varifold $W':= W+ r_\# W$ is a stationary varifold. By 
\eqref{e:massariscala} and Corollary 2 of 5.1 in \cite{All}, we conclude
that $(O_\lambda)_\# W' \res \mathcal{B}^+_1 = W'$
for every $\lambda<1$. On the other hand, this implies $(O_\lambda)_\# W \res 
\mathcal{B}^+_1 = W$. Therefore $W$ is a cone and we will call
it {\em tangent cone to $V$ at $x$}.

\subsection{A squeezing homotopy}\label{ss:squeeze}
Since for points in the interior
the proof is already given in \cite{CD}, we assume that $x\in
\partial U$. Moreover, the proof given here in this case can easily
be modified for $x\in U$. 
Therefore we next fix a small radius
$\eps> 0$ and consider an isotopy
$\varphi$ of $U\cap B_\eps (x)$ keeping the boundary fixed. 

We start by fixing a small parameter $\delta>0$ which will be chosen
at the end of the proof. Next, we consider a
diffeomorphism $G_\eps$ between $\mathcal{B}_\eps^+
= \mathcal{B}_\eps \cap
\{y_1>0\}$ and $B_\eps (x)\cap U$. Consider on
$\mathcal{B}_\eps^+$ the standard Euclidean metric and
denote the corresponding $2$-dimensional Hausdorff measure 
with $\haus^2_e$. If
$\eps$ is sufficiently small, then $G_\eps$ can be chosen so
that the Lipschitz constants of $G_\eps$ and $G^{-1}_\eps$
are both smaller than $1+\eps$.
 Then, for any surface $\Delta\subset
B_\eps (x)\cap U$,
\begin{equation}\label{e:areaeuclidea}
(1-C\delta) \haus^2 (\Delta) \;\leq\; \haus^2_e (G_\eps
(\Delta)) \;\leq\; (1+C\delta) \haus^2 (\Delta)\, ,
\end{equation}
where $C$ is a universal constant.

We want to construct an
isotopy $\Lambda\in \Is (\mathcal{B}^+_\eps)$ such that $\Lambda (1,
\cdot) = G_\eps \circ \varphi (1, G_\eps^{-1}(\cdot))$ and
(for $k$ large enough)
\begin{equation}\label{e:areaeuclidea2}
\haus^2_e (\Lambda (t, G_\eps (\Delta^k)))\;\leq\; \haus^2_e (
G_\eps (\Delta^k))(1+C\delta) + C \delta \qquad \mbox{for every
$t\in [0,1]$.}
\end{equation}
After finding $\Lambda$, 
$\Phi (t, \cdot) = G_\eps^{-1}\circ \Lambda (t, G_\eps (\cdot))$
will be the desired map. Indeed $\Phi$ is an isotopy of $B_\eps (x)\cap
U$ which keeps a neighborhood of $B_\eps (x)\cap U$ fixed. It is
easily checked that $\Phi (1, \cdot) = \varphi (1, \cdot)$.
Moreover, by \eqref{e:areaeuclidea} and \eqref{e:areaeuclidea2}, for
$k$ sufficiently large we have
\begin{equation}\label{e:areavera}
\haus^2 (\Phi (t, \Delta^k)) \;\leq\; (1+C\delta) \haus^2 (\Delta^k)
+ C\delta \qquad \forall t\in [0,1]\, ,
\end{equation}
for some constant $C$ inpendent of $\delta$ and $k$. 
Since $\haus^2 (\Delta^k)$ is bounded by a constant independent of
$\delta$ and $k$, by choosing $\delta$ sufficiently small, we reach
the claim of the Lemma.

Next, we consider on $\mathcal{B}^+_\eps$ a one-parameter family of
diffeomorphisms. First of all we consider the continuous piecewise
linear map $\alpha :[0,1[\to [0,1]$ defined in the following way:
\begin{itemize}
\item $\alpha (t,s) = s$ for $(t+1)/2\leq s \leq 1$;
\item $\alpha (t,s) = (1-t) s$ for $0\leq s\leq t$;
\item $\alpha (t,s)$ is linear on $t\leq s \leq (t+1)/2$.
\end{itemize}
So, each $\alpha (t, \cdot)$ is a biLipschitz homeomorphism of
$[0,1]$ keeping a neighborhood of $1$ fixed, shrinking a portion of
$[0,1]$ and uniformly stretching the rest. For $t$ very close to
$1$, a large portion of $[0,1]$ is shrinked into a very small
neighborhood of $0$, whereas a small portion lying close to $1$ is
stretched to almost the whole interval.

Next, for any given $t\in [0,1[$, let $y_t:= ((1-t) \eta \eps, 0,0)$
where $\eta$ is a small parameter which will be fixed later. 
For any $z\in
\mathcal{B}^+_\eps$ we consider the point $\pi_t (z)\in
\partial\mathcal{B}^+_\eps$ such that the segment $[y_t, \pi_t (z)]$
containing $z$. We then define $\Psi (t,z)$ to be the point on the
segment $[y_t, \pi_t (z)]$ such that
$$
|y_t- \Psi (t,z)| = \alpha \left( t, \frac{|y_t-z|}{|x_t -\pi_t
(x)|}\right) |y_t- \pi_t (z)|\, .
$$
It turns out that $\Psi (0, \cdot)$ is the identity map and, for
fixed $t$, $\Psi (t,\cdot)$ is a biLipschitz homeomorphism of 
$\mathcal{B}^+_\eps$ keeping a neighborhood of $\partial 
\mathcal{B}^+_\eps$
fixed. Moreover, for $t$ close to $1$, $\Psi (t, \cdot)$ shrinks
a large portion of $\mathcal{B}^+_\eps$ in a neighborhood of $y_t$ and
stretches uniformly a layer close to $\partial \mathcal{B}_\eps$. 
See Figure \ref{f:shrink}.

We next consider the isotopy $\Xi (t, \cdot) :=
G_\eps^{-1} \circ \Psi (t, G_\eps (\cdot))$. It is easy to check
that, if we fix a $\Delta^k$ and we let $t\uparrow 1$, then the
surfaces $\Psi (1, G_\eps (\Delta^k))$ converge to the 
cone with center $0$
and base $G_\eps (\Delta^k)\cap \partial \mathcal{B}_\eps$.

\begin{figure}[htbp]
\begin{center}
    \input{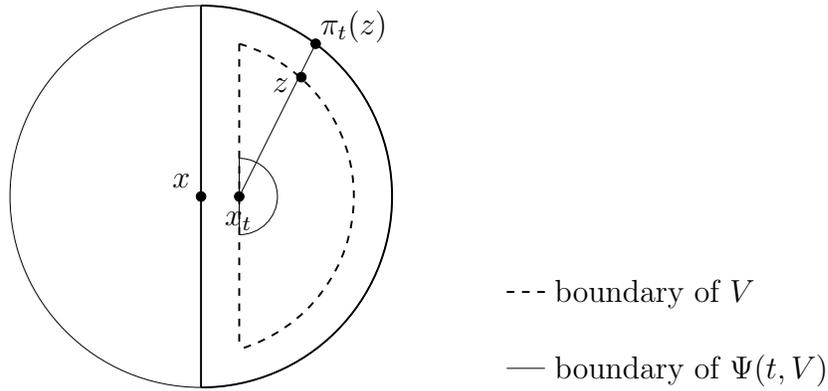}
    \caption{For $t$ close to $1$ the map $\Psi (t, \cdot)$
shrinks homotethically a large portion of $\mathcal{B}^+_\eps$.}
    \label{f:shrink}
\end{center}
\end{figure}

\subsection{Fixing a tangent cone}
By Subsection \ref{ss:tcones}, we 
can find a sequence $\rho_l\downarrow 0$
such that $V_{x, \rho_l}$ converges to a tangent cone $W$.
Our choice of the diffeomorphism $G_{\rho_l}$ implies that
$(O_{\rho_l}\circ G_{\rho_l})_{\#} V$ has the same varifold
limit as $V_{x, \rho_l}$.

Since $\Delta^k$ converges to $V$ in the sense of varifolds, by a
standard diagonal argument, we can find an increasing sequence of
integers $K_l$ such that:
\begin{itemize}
\item[(C)] $(O_{\rho_l} ( G_{\rho_l} (\Delta^{k_l}))$
converges in the varifold sense to $W$, whenever $k_l\geq K_l$.
\end{itemize}
(C), the conical property of $W$ and the coarea 
formula imply the following fact. For
$\rho_l$ sufficiently small, and for $k$ sufficiently large, there
is an $\eps\in ]\rho_l/2, \rho_l[$ such that:
\begin{equation}\label{e:stimacono}
\haus^2_e \big( \Psi (t, G_{\eps} (\Delta^k)\cap L)\big)
\;\leq\; \haus^2_e \big(G_{\eps} (\Delta^k)\cap L\big) +
\delta\, \qquad \mbox{$\forall t$ and all open $L\subset
\mathcal{B}^+_\eps$,}
\end{equation}
where $\Psi$ is the map constructed in the previous subsection. This
estimate holds independently of the small parameter $\eta$.
Moreover, it fixes the choice of $\eps_0$ and $K$ as in the
statement of the Lemma. $K$ depends only on the parameter $\delta$,
which will be fixed later. $\eps$ might depend on $k\geq K$, but it
is always larger than some fixed $\rho_l$, which will then be the
$\eps_0$ of the statement of the Lemma.

\medskip

\subsection{Construction of $\Lambda$.} Consider next the
isotopy $\psi = G_\eps \circ \varphi\circ G_\eps^{-1}$. By
definition, there exists a compact set $K$ such that $\psi (t, z)=z$
for $z\in \mathcal{B}^+_\eps\setminus K$ and every $t$. We now
choose $\eta$ so small that $K\subset \{x: x_1> \eta\eps\}$.
Finally, consider $T\in ]0,1[$ with $T$ sufficiently close to $1$.
We build the isotopy $\Lambda$ in the following way:
\begin{itemize}
\item for $t\in [0,1/3]$
we set $\Lambda (t, \cdot) = \Psi (3tT, \cdot)$;
\item for $t\in [1/3,2/3]$
we set $\Lambda (t, \cdot) = \Psi (3tT, \psi (3t -1, \cdot))$;
\item for $t\in [2/3,1]$ we set $\Lambda (t, \cdot) =
\Psi (3(1-t) T, \psi (1, \cdot))$.
\end{itemize}
If $T$ is sufficiently large, then $\Lambda$ satisfies
\eqref{e:areaeuclidea2}. Indeed, for $t\in [0,1/3]$,
\eqref{e:areaeuclidea2} follows from \eqref{e:stimacono}. Next,
consider $t\in [1/3,2/3]$. Since $\psi (t, \cdot)$ moves only points
of $K$, $\Lambda (t, x)$ coincides with $\Psi (T, x)$ except for $x$
in $\Psi (T, K)$. However, $\Psi (T,x)$ is homotethic to $K$ with a
very small shrinking factor. Therefore, if $T$ is chosen
sufficiently large, $\haus^2_e (\Lambda (t, G_\eps
(\Delta^k)))$ is arbitrarily close to $\haus^2_e (\Lambda (1/3,
G_\eps (\Delta^k)))$. Finally, for $t\in [2/3,1]$, $\Lambda
(t,x) = \Psi (3(1-t)T, x)$ for $x\not\in \Psi (3(1-t)T, K)$ and it
is $\Psi (3(1-t)T, \psi (1,x))$ otherwise. Therefore, $\Lambda (t,
G_\eps (\Delta^k))$ differs from $\Psi (3(1-t)T, G_\eps
(\Delta^k))$ for a portion which is a rescaled version of
$G_\eps (\varphi (1, \Delta^k)\setminus G_\eps
(\Delta^k)$. Since by hypothesis $\haus^2 (\varphi (1, \Delta^k))
\leq \haus^2 (\Delta^k)$, we actually get
$$
\haus^2_e \big(G_\eps (\varphi (1, \Delta^k))\setminus
G_\eps (\Delta^k)\big) \;\leq\; (1+C\delta) \haus^2_e
\big(G_\eps (\Delta^k) \setminus G_\eps (\varphi (1,
\Delta^k))\big)
$$
and by the scaling properties of the euclidean Hausdorff measure we
conclude \eqref{e:areaeuclidea2} for $t\in [2/3,1]$ as well.

Though $\Lambda$ is only a path of biLipschitz homeomorphisms, it is
easy to approximate it with a smooth isotopy: it suffices indeed to
smooth $\alpha|_{[0,T]\times [0,1]}$, for instance mollifying it
with a standard kernel.

\section{Proof of Proposition \ref{p:MSYbis}. Part III:
$\gamma$--reduction}\label{s:MSY3} In
this section we prove the following

\begin{lemma}[Interior regularity]\label{l:interior}
Let $V$ be as in Proposition \ref{p:MSYbis}.
Then $\|V\|= \haus^2\res \Delta$ where $\Delta$ is
a smooth stable minimal surface in $U$ (multiplicity is allowed).
\end{lemma}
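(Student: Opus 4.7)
The plan is to establish the result locally: for each $x \in U$ I produce a small ball on which $\|V\|$ agrees with $\haus^2\res \Delta_x$ for a smooth stable minimal surface $\Delta_x$, and then cover. The main tool for localization is the Squeezing Lemma (Lemma \ref{l:squeeze}) of Section \ref{s:MSY2}, which converts the ``constrained'' minimality of $\{\Delta^k\}$ in $\Is_j(U,\Sigma)$ into an ordinary local minimality in small balls, opening the door to the Meeks--Simon--Yau machinery of \cite{MSY}.

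\textbf{Localizing the minimality.} Set $m^\star := \inf_{\psi \in \Is_j(U,\Sigma)} \haus^2(\psi(1,\Sigma))$, so $\haus^2(\Delta^k)\to m^\star$. Fix $x \in U$ and $\beta \in (0, 1/(16j))$. Lemma \ref{l:squeeze} produces $\eps_0 > 0$ and $K \in \N$ such that every $\varphi \in \Is(B_{\eps_0}(x) \cap U)$ with $\haus^2(\varphi(1,\Delta^k)) \leq \haus^2(\Delta^k)$ (and $k \geq K$) admits a $\Phi \in \Is(B_{\eps_0}(x) \cap U)$ with the same endpoint and $\haus^2(\Phi(t,\Delta^k)) \leq \haus^2(\Delta^k) + \beta$ throughout. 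Concatenating $\varphi^k$ with $\Phi$ then yields an element of $\Is_j(U,\Sigma)$, since for $k$ large $\haus^2(\Delta^k) + \beta \leq m^\star + 2\beta \leq \haus^2(\Sigma) + 1/(8j)$. If $\varphi$ had achieved $\haus^2(\varphi(1,\Delta^k)) \leq \haus^2(\Delta^k) - 3\beta$, the resulting competitor would beat $m^\star$, contradicting minimality. Thus, for each $\beta > 0$, the $\Delta^k$'s are $3\beta$--almost minimizing in $\Is(B_{\eps_0}(x)\cap U)$ once $k$ is large enough.

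\textbf{MSY in the small ball.} With this local almost--minimality at arbitrarily fine scales $\beta \downarrow 0$, I would follow the $\gamma$--reduction program of \cite{MSY}: replace each $\Delta^k \cap B_{\eps_0/2}(x)$ by a $\gamma$--irreducible surface $\widetilde\Delta^k$ of no greater area, then invoke the MSY curvature estimates and the monotonicity formula to extract a subsequential smooth limit $\Delta_x \subset B_{\eps_0/2}(x)$, which is automatically a stable embedded minimal surface. The local almost--minimality of the previous paragraph, together with the stationarity of $V$, forces the varifold limit of $\widetilde\Delta^k$ on $B_{\eps_0/2}(x)$ to coincide with $\|V\|\res B_{\eps_0/2}(x)$, so $\|V\|\res B_{\eps_0/2}(x) = \haus^2\res \Delta_x$ with integer multiplicities. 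A finite covering of any $U' \subset\subset U$ yields the lemma in the interior.

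\textbf{The main obstacle.} The central difficulty, flagged in the opening of this section, is that a direct application of \cite{MSY} is not possible: a local $\gamma$--reduction of $\Delta^k$ corresponds to an isotopy which, when prepended by $\varphi^k$, can push intermediate areas above the threshold $\haus^2(\Sigma) + 1/(8j)$, so that the composition exits $\Is_j(U,\Sigma)$. This is precisely the gap that the paper identifies in Smith's suggested strategy. The Squeezing Lemma substitutes for this broken step: it provides admissible global extensions of area--non--increasing local isotopies, modulo an additive error $\beta$ that we can send to $0$ by choosing $k$ large. Once this bridge is available, smoothness and stability follow via Schoen's estimates (Subsection \ref{ss:curvest}) and the remainder of the MSY argument transplants essentially verbatim.
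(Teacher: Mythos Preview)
Your localization via the Squeezing Lemma is correct and matches the paper's first move: for $x\in U$ and small $B_{\eps_0}(x)\subset U$ you do obtain that the $\Delta^k$'s are, for $k$ large, almost minimizing in the unconstrained class $\Is(B_{\eps_0}(x))$. The gap is in the next paragraph, where you write ``replace each $\Delta^k\cap B_{\eps_0/2}(x)$ by a $\gamma$--irreducible surface $\widetilde{\Delta}^k$ of no greater area, then invoke the MSY curvature estimates''. This is precisely what Remark~\ref{r:nonsipuofare2} rules out. The $\gamma$--reduction of \cite{MSY} (cf.\ Remark~\ref{r:gamma1}) produces a $\gamma_0$--irreducible surface after at most $c$ surgeries, but the constant $c$ depends on \emph{both} $\haus^2(\Delta^k\cap B_{\eps_0/2}(x))$ and $\gen(\Delta^k\cap B_{\eps_0/2}(x))$. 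The area is uniformly bounded, but the boundary $\Delta^k\cap\partial B_{\eps_0/2}(x)$ varies with $k$ and you have no a priori control on the local genus; hence $c=c(k)$ may blow up, the symmetric difference $\haus^2(\widetilde{\Delta}^k\vartriangle\Delta^k)\le 3c(k)\gamma_0$ need not vanish, and you cannot conclude that $\widetilde{\Delta}^k$ converges to $V$ in the ball. Your ``main obstacle'' paragraph diagnoses a different difficulty (intermediate areas exceeding the $1/(8j)$ threshold), which is the issue of Remark~\ref{r:nonsipuofare1} for the global problem; the Squeezing Lemma does handle that, but it does nothing about the missing genus bound in the local problem.

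The paper circumvents this by inserting an extra layer. For each fixed $k$ it takes a \emph{genuine} minimizing sequence $\{\Delta^{k,l}\}_l$ for Problem $(\Delta^k,\Is(B_{\eps}(x)))$; every $\Delta^{k,l}$ is isotopic to $\Delta^k$, so has the same (finite, $k$--dependent) genus, and Proposition~\ref{MSYinterno} applies to give a smooth stable minimal limit $\Sigma^k$ in $B_\eps(x)$. Schoen's curvature estimates then let the smooth surfaces $\Sigma^k$ converge to a smooth stable $\Sigma^\infty$. A diagonal sequence $\Delta^{k,l(k)}$ (still admissible in $\Is_j(U,\Sigma)$ by the Squeezing Lemma) converges to a varifold $V'$ that equals $\Sigma^\infty$ inside $B_\eps(x)$ and $V$ outside; this is a \emph{replacement} in the sense of \cite{CD}, and the replacement regularity theory (Proposition~6.3 of \cite{CD}) then yields smoothness of $V$ itself. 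The point is that $\gamma$--reduction is never applied to the sequence $\{\Delta^k\}_k$ directly, only to sequences of fixed isotopy type.
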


In fact the lemma follows from the interior version of the squeezing
lemma and the following proposition, applying the regularity theory
of replacements as described in \cite{CD} (cp. with Section 7
therein).

\begin{propos}\label{MSYinterno} Let $U$ be an open ball
with sufficiently small radius. If $\Lambda$ is an
embedded surface with smooth boundary $\partial \Lambda \subset
\partial U$ and
  $\{\Lambda^k\}$ is a minimizing sequence for Problem $(\Lambda, \Is
  (U))$ converging to a varifold $W$,
  then there exists a stable minimal surface $\Gamma$ with
  $\overline{\Gamma}\setminus \Gamma \subset \partial \Lambda$ and
  $W=\Gamma$ in $U$.
\end{propos}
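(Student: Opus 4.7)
The plan is to adapt the $\gamma$-reduction technique of Meeks--Simon--Yau \cite{MSY}, which applies directly here (unlike in Proposition \ref{p:MSYbis}) because the minimization is taken over the full isotopy class $\Is(U)$, without the intermediate area constraint $\haus^2(\psi(\tau,\Lambda))\leq\haus^2(\Lambda)+1/(8j)$ that forbids transient area increases.

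First I would upgrade the global minimization of $\{\Lambda^k\}$ to a local almost-minimizing property on every interior ball $B_\eps(x)\subset\subset U$. Any $\varphi\in\Is(B_\eps(x))$ extends by the identity to an element of $\Is(U)$; since $\haus^2(\Lambda^k)\to m:=\inf_{\psi\in\Is(U)}\haus^2(\psi(1,\Lambda))$, we have $\haus^2(\varphi(1,\Lambda^k))\geq m\geq \haus^2(\Lambda^k)-o(1)$. The interior version of the Squeezing Lemma \ref{l:squeeze} then converts this endpoint bound into a genuine almost-minimizing condition controlling also the intermediate areas, so that $\Lambda^k$ is $\eps_k$-a.m.\ in every sufficiently small interior ball, with $\eps_k\downarrow 0$.

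Next I would perform $\gamma$-reduction on each $\Lambda^k$: iteratively remove thin necks and cap them off with pairs of disks, producing a new surface $\tilde{\Lambda}^k$ with $\haus^2(\tilde{\Lambda}^k)\leq\haus^2(\Lambda^k)+o(1)$ and strictly smaller genus. Each such reduction is realized (up to arbitrarily small area error) as a limit of isotopies in $\Is(U)$, so minimality of $\{\Lambda^k\}$ forces $\haus^2(\tilde{\Lambda}^k)\to m$ as well. Since $\gen(\Lambda^k)=\gen(\Lambda)$ is fixed and finite, the procedure terminates in uniformly bounded many steps, yielding a $\gamma$-irreducible sequence of uniformly bounded genus. $\gamma$-irreducibility, combined with local almost-minimization and the monotonicity formula, excludes the formation of necks of arbitrarily small scale and yields uniform curvature estimates on every compact $K\subset U$. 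By the compactness result for stable minimal surfaces recalled in Subsection \ref{ss:curvest}, a subsequence $\tilde{\Lambda}^k$ converges smoothly (possibly with multiplicity) to a stable minimal surface $\Gamma\subset U$. Since $\haus^2(\tilde{\Lambda}^k\setminus\Lambda^k)\to 0$, we deduce $W=\Gamma$ in $U$. The inclusion $\overline{\Gamma}\setminus\Gamma\subset\partial\Lambda$ follows since $\Gamma$ is smooth in the interior and the only way boundary points can appear is as limits of $\Lambda^k\cap\partial U=\partial\Lambda$.

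The main obstacle is verifying that the $\gamma$-reduction argument of \cite{MSY} goes through in this setting with a fixed Dirichlet-type boundary condition on $\partial U$: one must ensure that the necks removed during reduction stay well inside $U$ (so that the surgery does not interact with $\partial\Lambda$) and that the curvature estimates derived from $\gamma$-irreducibility are uniform up to compact interior subsets. This can be arranged by performing the reductions at scales strictly smaller than the distance to $\partial U$, but the combinatorial bookkeeping and the verification that the reduced sequence is still asymptotically minimizing for Problem $(\Lambda,\Is(U))$ is the technical heart of the argument.
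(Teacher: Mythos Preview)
Your outline has the right high-level structure---$\gamma$-reduce to a strongly irreducible sequence with a uniformly bounded number of surgeries (this works precisely because $\gen(\Lambda^k)=\gen(\Lambda)$ and $\haus^2(\Lambda^k)$ are uniformly bounded, cf.\ Remark~\ref{r:gamma1}), then invoke the regularity theory of \cite{MSY}. But the passage from $\gamma$-irreducibility to smoothness is where the argument breaks. You write that $\gamma$-irreducibility plus local almost-minimization ``yields uniform curvature estimates on every compact $K\subset U$'' and then appeal to Subsection~\ref{ss:curvest}. That subsection concerns Schoen's estimates for \emph{stable minimal} surfaces; your $\tilde{\Lambda}^k$ are neither minimal nor stable---they are just $\gamma$-irreducible competitors. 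No curvature bound on the approximating sequence is available, and none is how \cite{MSY} proceeds: their Theorem~2 is a replacement argument (build local stable minimal replacements, show they glue to the varifold limit, then apply curvature estimates to \emph{those}), not a compactness argument on the $\tilde{\Lambda}^k$ themselves. The paper's proof simply cites Theorem~2 and Section~5 of \cite{MSY} at this point; if you want to unpack it, you must reproduce the replacement mechanism, not shortcut through Schoen.

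Two smaller issues. First, the Squeezing Lemma detour in your opening paragraph is unnecessary here: since the sequence minimizes over the \emph{full} isotopy class $\Is(U)$, any area-decreasing isotopy supported in a sub-ball (regardless of intermediate areas) is already admissible, so $\eps_k$-almost-minimality in interior balls is immediate without squeezing. Second, the inclusion $\overline{\Gamma}\setminus\Gamma\subset\partial\Lambda$ does not follow from ``$\Gamma$ smooth in the interior and $\Lambda^k\cap\partial U=\partial\Lambda$''; interior points of $\Lambda^k$ could in principle accumulate on $\partial U\setminus\partial\Lambda$. The paper rules this out via the convex hull property (Lemma~\ref{l:convexhull}), which pushes the support of the limit varifold into a set meeting $\partial U$ only along $\partial\Lambda$. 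You need that step, or an equivalent barrier argument, before concluding anything about $\overline{\Gamma}\cap\partial U$.
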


This Proposition has been claimed in \cite{CD} (cp. with Theorem 7.3
therein) and since nothing on the behavior of $W$ at the boundary is
claimed, it follows from a straightforward modification of the
theory of $\gamma$-reduction of \cite{MSY} (as asserted in
\cite{CD}). This simple modification of the $\gamma-$reduction is,
as the original $\gamma$-reduction, a procedure to reduce through
simple surgeries the minimizing sequence $\Lambda^k$ into a more
suitable sequence.

In this section we also wish to explain why this argument cannot be
directly applied neither to the surfaces $\Delta^k$ of Proposition
\ref{p:MSYbis} on the {\em whole} domain $U$ (see Remark \ref{r:nonsipuofare1}), 
nor to their
intersections with a smaller set $U'$ (see Remark \ref{r:nonsipuofare2}). 
In the first case, the
obstruction comes from the $1/j$-a.m. property, which is not
powerful enough to perform certain surgeries. In the second case
this obstruction could be removed by using the squeezing lemma, but
an extra difficulty pops out: the intersection $\Delta^k\cap
\partial U'$ is, this time, not fixed and the topology of
$\Delta^k\cap U'$ is not controlled. These technical problems are
responsible for most of the complications in our proof.

\subsection{Definition of the $\gamma$--reduction}
In what follows, we assume that an open set $U\subset M$ 
and a surface $\Lambda$ in $M$ with $\partial \Lambda \subset
\partial U$ are fixed. Moreover, we let
$\mathcal C$ denote the collection of all compact smooth
2-dimensional surfaces embedded in $U$ with boundary equal
to $\partial \Lambda$.

We next fix a positive number $\delta$ such that the conclusion of Lemma 1 in
\cite{MSY} holds and consider $\gamma<\delta^2/9$. Following
\cite{MSY} we define the $\gamma$-reduction and the
strong $\gamma$-reduction. 

\begin{definition}\label{d:gamma reduction}
For $\Sigma_1,\Sigma_2\in \mathcal{C}$
we write
$$
\Sigma_2\stackrel{(\gamma,U)}{\ll}\Sigma_1
$$
and we say that $\Sigma_2$ is a $(\gamma,U)-$reduction of
$\Sigma_1$, if the following conditions are satisfied:

\begin{itemize}
\item[($\gamma1$)] $\Sigma_2$ is obtained from $\Sigma_1$ through 
a surgery as described in Definition \ref{d:surgery}. Therefore:
\begin{itemize}
\item $\overline{\Sigma_1\setminus\Sigma_2}=A\subset U$ 
is diffeomorphic to the standard closed annulus
$\overline{\an (x, 1/2, 1)}$;
\item $\overline{\Sigma_2\setminus\Sigma_1}=D_1\cup 
D_2\subset U$ with each $D_i$ diffeomorphic to $\D$; 
\item There exists a set  $Y$ embedded in $U$, homeomorphic 
to $\B_1$ with $\partial Y=A\cup D_1\cup D_2$
and $(Y\setminus \partial Y)\cap(\Sigma_1\cup\Sigma_2)=\varnothing$.
(See Picture \ref{f:cutting}).
\end{itemize}
\item[($\gamma2$)]
$\haus^2(A)+\haus^2(D_1)+\haus^2(D_2)<2\gamma$;
\item[($\gamma3$)]If $\Gamma$ is the connected component of 
$\Sigma_1\cup \overline U$ containing $A$, then for each component
of $\Gamma\setminus A$ we have one of the following
possibilities:
\begin{itemize}
\item either it is a disc of area $\geq\delta^2/2$;
\item or it is not simply connected.
\end{itemize}
\end{itemize}
\end{definition}

\begin{remark}
The previous definition has another interesting consequence that the
reader could easily check: $\Sigma\in \mathcal C$ is
$(\gamma,U)-$irreducible if and only if whenever $\Delta$ is a disc
with $\partial \Delta=\Delta\cap\Sigma$ and
$\haus^2(\Delta)<\gamma$, then there is a disc $D\subset \Sigma$
with $\partial D=\partial \Delta$ and $\haus^2(D)<\delta^2/2$.
\end{remark}

A slightly weaker relation than
$\stackrel{(\gamma,U)}{\ll}$ can be defined as follows.
We consider $\Sigma_1,\Sigma_2\in\mathcal C$ and we say that
$\Sigma_2$ is a strong $(\gamma,U)-$reduction of $\Sigma_1$, written
$\Sigma_2\stackrel{(\gamma,U)}{<}\Sigma_1$, if there exists an
isotopy $\psi\in\Is (U)$ such that 
\begin{itemize}
\item[($s1$)]
$\Sigma_2\stackrel{(\gamma,U)}{\ll}\psi(\Sigma_1)$;
\item[($s2$)]
$\Sigma_2\cap(M\setminus U)=\Sigma_1\cap (M\setminus U)$;
\item[($s3$)]
$\haus^2(\psi(\Sigma_1)\triangle \Sigma_1)<\gamma$.
\end{itemize}
We say that $\Sigma\in \mathcal C$ is strongly
$(\gamma,U)-$irreducible if there is no $\tilde{\Sigma}\in \mathcal
C$ such that $\tilde{\Sigma}\stackrel{(\gamma,U)}{<}\Sigma$.

\begin{remark}\label{r:gamma1}
Arguing as in \cite{MSY} one
can prove that, for every $\Lambda'\in \mathcal{C}$,
there exist a constant $c\geq 1$ (depending on
$\delta$, $\gen(\Lambda')$ and $\haus^2(\Lambda')$) and a sequence
of surfaces $\Sigma_j$, $j=1,...,k$, such that
\begin{equation}\label{e:gamma1}
k\leq c;
\end{equation}
\begin{equation}\label{e:gamma2}
\Sigma_j\in \mathcal C;\quad j=1,...,k;
\end{equation}
\begin{equation}\label{e:gamma4}
\Sigma_k\stackrel{(\gamma,U)}{<}\Sigma_{k-1}\stackrel{(\gamma,U)}{<}...
\stackrel{(\gamma,U)}{<}\Sigma_1 = \Lambda'\, ;
\end{equation}
\begin{equation}\label{e:gamma5}
\haus^2 (\Sigma_k\Delta \Lambda')\;\leq\; 3 c \gamma\, ;
\end{equation}
\begin{equation}\label{e:gamma6}
\mbox{$\Sigma_k$ is strongly $(\gamma,U)-$irreducible.}
\end{equation}
Compare with Section 3 of \cite{MSY} and in particular with (3.3),
(3.4), (3.8) and (3.9) therein. 
\end{remark}

\subsection{Proof of Proposition \ref{MSYinterno}} 
Applying Lemma \ref{l:convexhull}, we conclude that
a susbsequence, not relabeled, of $\Lambda^k$
converges to a stationary varifold $V$ in $\overline{U}$
such that $\overline{U}\cap \supp (V) \subset \partial \Lambda$.
Next, arguing as in Section 6.1, we conclude that
$\|V\| (\partial \Lambda)=0$, and hence that $\|V\| (\partial U)=0$.
Arguing as in pages 364-365 of \cite{MSY} (see (3.22)--(3.26) therein),
we find a $\gamma_0>0$ and a sequence of $\gamma_0$--strongly irreducible
surfaces $\Sigma^k$ with the following properties:
\begin{itemize}
\item $\Sigma^k$ is obtained from $\Lambda^k$ through a number
of surgeries which can be bounded independently of $k$;
\item $\Sigma^k$ converges, in the sense of varifolds,
to $V$.
\end{itemize}
This allows to apply Theorem 2 and Section 5 of \cite{MSY} to
the surfaces $\Sigma^k$ to conclude that $\supp (V)\setminus \partial U$ 
is a smooth embedded stable minimal surface.  

\begin{remark}\label{r:nonsipuofare1}
This procedure {\em cannot} be applied if the minimality of
the sequence $\Lambda^k$ in
$\Is (U)$ were replaced by the minimality in $\Is_j (U)$.
In fact, the proof of Theorem 2 in \cite{MSY} uses heavily
the minimality in $\Is (U)$ and we do not know how to overcome
this issue. 
\end{remark}

\subsection{Proof of Lemma \ref{l:interior}}
Let $\Delta^k$ and $V$ be as in Proposition \ref{p:MSYbis}
and in Lemma \ref{l:interior}. Let $x\in U$ and consider
a $U'=B_\eps (x)\subset U$ as in Lemma \ref{l:squeeze}. 
Applying Lemma \ref{l:squeeze} we can
modify $\Delta^k$ in $B_\eps (x)$ getting a 
minimizing sequence $\{\Delta^{k,j}\}_j$ for $\Is (B_\eps (x))$.
Applying Proposition \ref{MSYinterno}, we can
assume that $\Delta^{k,j}$ converges, as $j \uparrow\infty$
to a varifold $V'_k$ which in $B_\eps (x)$ is
a stable minimal surface $\Sigma^k$. 
By the curvature estimates for minimal surfaces
(cp. also with the Choi-Schoen compactness Theorem), we 
can assume that $\Sigma^k$ converges to a stable
smooth minimal surface $\Sigma^\infty$. 
Extracting a diagonal subsequence $\tilde{\Delta}^k := \Delta^{k, j(k)}$,
we can assume that $\tilde{\Delta}^k$ is still minimizing
for problem $\Is_j (U)$ and hence that it converges
to a varifold $V'$. $V'$ coincides with $\Sigma$ in $B_\eps (x)$
and with $V$ outside $B_\eps (x)$ and hence it is a replacement
according to Definition 6.2 in \cite{CD} (see Section 7 therein).
By Proposition 6.3 of \cite{CD}, $V$ coincides with a smooth embedded
minimal surface in $U$. 

\begin{remark}\label{r:nonsipuofare2}
Note that the arguments of Section 3 of \cite{MSY}
{\em cannot} be applied directly to the sequence $\Delta^k$.
It is indeed possible to modify $\Delta^k$ in $B_\eps (x) =:U'$
to a strongly $\gamma$-irreducible $\tilde{\Delta}^k$. However,
the number of surgeries needed is controlled by $\haus^2 (\Delta^k
\cap B_\eps (x))$ and $\gen (\Delta^k\cap U')$.
Though the first quantity can be bounded independently of $k$,
on the second quantity (i.e. $\gen (\Delta^k\cap U')$) we
do not have any a priori uniform bound. 
\end{remark}

\section{Proof of Proposition \ref{p:MSYbis}. Part IV:
Boundary regularity.}\label{s:MSY4}

In this section we conclude the proof of the first part of
Propositions \ref{p:MSYbis} and \ref{p:MSYtris}. More precisely, we
show that the surface $\Delta$ of Lemma \ref{l:interior} is regular
up to the boundary and its boundary coincides with $\partial
\Sigma$.

\begin{lemma}[Boundary regularity]\label{l:boundary}
Let $\Delta$ be as in Lemma \ref{l:interior}. Then $\Delta$ has a
smooth boundary and $\partial \Delta =\partial \Sigma$.
\end{lemma}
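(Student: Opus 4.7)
The plan is to reduce Lemma \ref{l:boundary} to an application of Allard's boundary regularity theorem (Section 4 of \cite{All2}). The interior smoothness of $\Delta$ is already provided by Lemma \ref{l:interior}; what remains is smoothness up to $\partial\Sigma$ together with the identification of the boundary. Since Allard's theorem requires, at every boundary point $x\in\partial\Sigma$, that $V$ be stationary with boundary a smooth multiplicity-one curve and that the density satisfy $\theta(x,V)=1/2$, the whole argument revolves around verifying this density identity. The convex-hull property of Lemma \ref{l:convexhull} supplies precisely the transversality and half-space containment needed to make Allard's machinery applicable; in particular, by the discussion in Subsection \ref{ss:tcones}, at each $x\in\partial\Sigma$ tangent cones $W$ exist, are stationary, and their reflection $W+r_\#W$ is a stationary cone in the whole ball.

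The first step I would carry out is to produce boundary replacements. Fix $x\in\partial\Sigma$ and a sufficiently small half-ball $B_\varepsilon(x)\cap U$. Applying the Squeezing Lemma (Lemma \ref{l:squeeze}) one converts the $\Is_j$-minimizing sequence $\Delta^k$ into a sequence which, up to an arbitrarily small area loss, is minimizing for Problem $(\Sigma,\Is(B_\varepsilon(x)\cap U))$. Combining this with a boundary analogue of Proposition \ref{MSYinterno} (whose proof uses the $\gamma$-reduction of Section \ref{s:MSY3} together with the convex-hull property, which localises the support of the limit away from $\partial U\setminus\partial\Sigma$), one obtains, via a diagonal argument as in the proof of Lemma \ref{l:interior}, a replacement varifold $V'$ that agrees with $V$ outside $B_\varepsilon(x)$ and coincides, inside, with a stable minimal surface having smooth boundary $\partial\Sigma\cap B_\varepsilon(x)$.

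The second and crucial step is to show that the tangent cone $W$ at $x$ is a multiplicity-one half-plane, equivalently $\theta(x,V)=1/2$. By the reflection principle of 3.2 in \cite{All2}, $W':=W+r_\#W$ is an integer stationary cone in $\mathcal{B}_1$ whose support, by Lemma \ref{l:convexhull}, lies in the wedge $\{|y_2|\le y_1\tan\theta_0\}\cup r(\cdot)$; the classification of such stationary cones forces $W$ to be a multiplicity-$m$ half-plane bounded by the tangent line $\ell$ to $\partial\Sigma$ at $x$, for some positive integer $m$. The hard part is ruling out $m\ge 2$. I would do this by a competitor construction: if the density were $\ge 1$ in some half-ball, the squeezing homotopy of Subsection \ref{ss:squeeze} (combined with the reflected doubling) would allow us to compress the $m$ sheets of $\Delta^k$ near $\partial\Sigma$ towards a single flat sheet through the boundary, yielding an isotopy in $\Is(B_\varepsilon(x)\cap U)$ that decreases $\haus^2(\Delta^k)$ by at least $c(m-1)\varepsilon^2$ while keeping the intermediate areas below $\haus^2(\Delta^k)+1/(8j)$, thereby contradicting the $\Is_j$-minimality of $\Delta^k$ once $\varepsilon$ is small enough relative to $1/j$. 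This step is where the precise quantitative nature of $\Is_j$ (rather than the plain $\Is$ used in Proposition \ref{p:MSYtris}) is used; in the $\Is$-setting the corresponding assertion is almost immediate from minimality.

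Once $\theta(x,V)=1/2$ is established for every $x\in\partial\Sigma$, Allard's boundary regularity theorem applies: $V$ coincides, in a neighbourhood of $x$, with a smooth minimal surface meeting $\partial U$ transversally along $\partial\Sigma$. Covering the compact curve $\partial\Sigma$ by finitely many such neighbourhoods and combining with Lemma \ref{l:interior} yields that $\Delta$ is a smooth embedded stable minimal surface in $\overline{U}$ with $\partial\Delta=\partial\Sigma$, completing the proof. The main obstacle, as indicated above, is the boundary multiplicity-one assertion; the convex-hull property and the existence of boundary replacements take care of the rest via off-the-shelf boundary regularity theory.
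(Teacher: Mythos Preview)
Your high-level strategy---reduce to Allard's boundary regularity theorem by showing that every boundary tangent cone is a single half-plane with multiplicity one---is exactly the paper's plan. However, the execution of the crucial step diverges substantially, and your sketch contains a real gap.

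First, your classification claim is not correct as stated. The reflection $W'=W+r_\#W$ being a stationary integral cone in $\mathcal{B}_1$, together with the wedge containment, only forces $W=\sum_{i=1}^N k_i P_{\theta_i}$, a sum of half-planes at \emph{possibly different} angles $\theta_i\in[-\theta_0,\theta_0]$ (this is what the paper derives in Subsection \ref{ss:tcones2}). It does \emph{not} force a single half-plane with multiplicity $m$. So both the number $N$ of distinct half-planes and the individual multiplicities $k_i$ must be shown to equal one.

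Second, your competitor construction (``compress the $m$ sheets of $\Delta^k$ near $\partial\Sigma$ towards a single flat sheet'') cannot be carried out by an isotopy. The surfaces $\Delta^k$ have $\partial\Sigma$ as a single multiplicity-one boundary curve; if the boundary tangent cone had total multiplicity $\sum_i k_i\ge 2$, then $\Delta^k$ must be \emph{folding back} near $\partial\Sigma$ (picture a pleated sheet), and one cannot push the sheets onto one another without changing topology. The quantitative saving of ``$c(m-1)\varepsilon^2$'' is asserted but nowhere produced. The paper's argument is considerably more elaborate: after passing to a diagonal blow-up sequence $\Gamma_n$ with the near-minimizing property (F'), one slices at a good radius $\sigma$ to get curves $\gamma^n_1,\ldots,\gamma^n_{M(n)}$ on $\partial\mathcal{B}^+_\sigma$, invokes the Almgren--Simon minimal-disk theorem to show (step (a)) that the total least-disk-area over these curves equals $\pi\sigma^2\sum_i k_i$, proves (step (b)) via a competitor built from a minimal graph over a wedge that each $\gamma^n_i$ with $i\ge 2$ bounds a disk of area at most $(\tfrac12-\delta)\sigma\,\haus^1_e(\gamma^n_i)$, and finally (step (c)) uses convergence in the sense of \emph{currents} together with $\partial\gamma=0$ to force $\sum_i k_i=1$. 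None of these ingredients appear in your sketch.

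Finally, your ``first step'' (boundary replacements via a boundary analogue of Proposition \ref{MSYinterno}) is not what the paper does and does not obviously help: the replacement problem in $B_\varepsilon(x)\cap U$ would itself require the very boundary regularity you are proving, and the paper explicitly explains (Remarks \ref{r:nonsipuofare1} and \ref{r:nonsipuofare2}) why the $\gamma$-reduction cannot be applied directly in this situation. The paper instead uses the Squeezing Lemma only to obtain the near-minimality (F') of the blow-up sequence, after which the argument is entirely a hands-on comparison with disks.
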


As a corollary, we conclude that the multiplicity of $\Delta$ is
everywhere $1$.

\begin{corol}\label{c:multiplicity}
There exist finitely many stable embedded connected disjoint minimal
surfaces $\Gamma_1, \ldots, \Gamma_N\subset U$ with disjoint smooth
boundaries and with multiplicity $1$ such that
\begin{equation}\label{e:split}
\Delta \;=\; \Gamma_1\cup\ldots \cup\Gamma_N \qquad\mbox{and}\qquad
\partial \Delta \;=\;
\partial \Gamma_1\cup\ldots\cup\partial \Gamma_N\, .
\end{equation}
\end{corol}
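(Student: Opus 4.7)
The plan is to leverage the boundary regularity from Lemma \ref{l:boundary} together with the convex hull property of Lemma \ref{l:convexhull} in order to force every multiplicity to be one. By Lemma \ref{l:interior}, $\|V\|=\haus^2\res \Delta$ with $\Delta$ a smooth stable minimal surface carrying an integer multiplicity, so I would begin by decomposing the associated varifold as $V=\sum_{i=1}^N n_i\,\Gamma_i$, where the $\Gamma_i$ are the distinct connected components of $\supp(\|V\|)$ and $n_i\geq 1$. Finiteness of $N$ follows from the uniform mass bound $\|V\|(U)\leq \liminf_k \haus^2(\Delta^k)<\infty$ together with the uniform lower mass bound on each component supplied by the monotonicity formula, once each $\Gamma_i$ is known to reach $\partial U$.

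The first key step is to show that every $\Gamma_i$ has non-empty boundary. By Lemma \ref{l:convexhull}, $\supp(\|V\|)\subset \overline{A}$ for a convex open set $A\subset U$, and since $U$ has sufficiently small radius, no closed minimal surface can be contained in $\overline{A}$: the strong maximum principle applied to the distance squared from an interior reference point (which is strictly subharmonic on minimal surfaces at small ambient scale) rules this out. Hence each $\Gamma_i$ must touch $\partial U$, and by Lemma \ref{l:boundary} this contact is precisely along a smooth subset of $\partial \Sigma$.

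The second key step is to rule out $n_i\geq 2$ and show that the $\partial \Gamma_i$ are pairwise disjoint. Since every isotopy in $\Is_j(U,\Sigma)$ fixes $\partial U$, each $\Delta^k=\varphi^k(1,\Sigma)$ has boundary exactly $\partial \Sigma$ counted with multiplicity one. Passing to the varifold limit, Lemma \ref{l:boundary} together with the Allard-type boundary regularity produces, at any $x\in\partial\Sigma$, a single half-plane as tangent cone to $V$, so the boundary density of $\|V\|$ at $x$ equals exactly $1/2$. If some $n_i\geq 2$, or if two distinct components $\Gamma_i\neq \Gamma_j$ shared a boundary arc, this density would jump to at least $1$, a contradiction. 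Thus every $n_i=1$ and the curves $\partial \Gamma_i$ form a pairwise disjoint partition of $\partial \Sigma$, which is exactly \eqref{e:split}.

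The main obstacle I expect is verifying that the limit density at each point of $\partial \Sigma$ is genuinely $1/2$ rather than some larger half-integer multiple: this amounts to ensuring that the tangent cone at each boundary point is a single half-plane, and that is precisely what the Allard-type boundary regularity underpinning Lemma \ref{l:boundary} provides. Once this is in hand, the remaining verifications (finiteness of $N$, non-emptiness of each $\partial \Gamma_i$, multiplicity one) reduce to standard maximum principle arguments and straightforward varifold bookkeeping.
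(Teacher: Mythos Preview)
Your proposal is correct and follows essentially the same route as the paper's proof: both arguments rule out closed components via the maximum principle in a small convex ball, and then deduce multiplicity one from the boundary regularity of Lemma~\ref{l:boundary}. Your version is simply more explicit about the mechanism---you spell out the boundary density~$1/2$ argument and the monotonicity-based finiteness of~$N$, whereas the paper absorbs these into the statement that ``$\partial\Gamma_i$ is the union of some connected components of $\partial\Sigma$'' and hence ``the multiplicity of $\Gamma_i$ is necessarily~$1$.''
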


\begin{proof} Lemmas \ref{l:interior} and \ref{l:boundary}
imply that $\Delta$ is the union of finitely many disjoint connected
components $\Gamma_1\cup\ldots \cup\Gamma_N$ contained in $U$
and that either $\partial \Gamma_i = 0$ or $\partial \Gamma_i$ is 
the union of some connected components of $\partial \Sigma$.
In this last case, the multiplicity of $\Gamma_i$ is necessarily $1$.
On the other hand, $\partial \Gamma_i = 0$ cannot occur,
otherwise $\Gamma_i$ would be a smooth embedded minimal surface
without boundary contained in a convex ball of a Riemannian manifold,
contradicting the classical maximum principle.
\end{proof}

\subsection{Tangent cones at the boundary}\label{ss:tcones2}
Consider now $x\in \supp \|V\|\cap \partial U$.
We follow Subsection \ref{ss:tcones} and consider the chart 
$f_{x,\rho} : B_\rho (x)\to \mathcal{B}_1$ given by
$f_{x,\rho} (y) = \exp_x^{-1} (y)/\rho$. 
We then denote by $V_{x,\rho}$ the varifolds $(f_{x,\rho})_\# V$.
Moreover, if $\lambda>0$, we will denote by $O_\lambda:
\rn{3}\to \rn{3}$ the rescaling $O_\lambda (x) = x/\lambda$.

Let next $W$ be the limit of a subsequence $V_{x, \rho_j}$. Again
following the discussion of Subsection \ref{ss:tcones}, we can
choose a system of coordinates $(y_1, y_2, y_3)$ such that:
\begin{itemize}
\item $W$ is integer rectifiable and $\supp (W)$ is contained in the wedge
$$
\Wedge 
\;:=\; \{(y_1, y_2, y_3) : |y_2|\;\leq\; y_1 \tan \theta_0\} \cap
\overline{\mathcal{B}}_1 (0)\, .
$$
\item $\supp (W)$ containes the line $\ell = \{(0,0,t) : t\in [-1,1]\}$,
(which is the limit of the curves $f_{x, \rho} 
(\partial \Sigma\cap B_\rho (x))$).
\item  If we denote by $r: \rn{3}\to \rn{3}$ the reflection given by
$r (z_1, z_2, z_3) = (-z_1, -z_2, z_3)$, then $r_\# W + W$
is a stationary cone.
\end{itemize} 

By the Boundary regularity Theorem of Allard (see Section 4 of \cite{All2}),
in order to show regularity it suffices to prove that
\begin{itemize}
\item[(TC)] Any $W$ as above (i.e. any varifold limit of a subsequence
$({f_x^{\rho_n}})_\# V$ with $\rho_n\downarrow 0$) is 
a half--disk of the form
\begin{equation}\label{half-disks}
P_\theta := \{(y_1, y_2, y_3): y_1>0 , y_3 = y_1 \tan \theta\}\cap
\mathcal{B}_1 (0)
\end{equation}
for some angle $\theta\in ]-\pi/2, \pi/2[$.
\end{itemize}

\medskip

In the rest of this section we aim, therefore, at proving (TC).
As a first step we now show that  
\begin{equation}\label{e:un_of_disks}
W \;=\; \sum_{i=1}^N k_i P_{\theta_i}\,
\end{equation}
where $k_i\geq 1$ are integers and $\theta_i$ are angles in
$[-\theta_0, \theta_0]$. There are two possible ways of seeing this.
One way is to use the Classification of stationary integral varifolds
proved by Allard and Almgren in \cite{AA}. 

The second, which is perhaps
simpler, is to observe that, on $\mathcal{B}^+$ the varifold $W$ is actually
smooth. Indeed, by the interior regularity, $V$ is a smooth minimal
surface in $B_\rho (x)\cap V$ and it is stable, therefore, by Schoen's
curvature estimates, a subsequence of $V_{x, \rho_n}$ converges 
smoothly in compact subsets of $\mathcal{B}^+$.
It follows that $W^r := W+ r_\# W$ coincides with a smooth minimal
surface outside on $\mathcal{B}_1 (0)\setminus \ell$. On the other
hand $W^r $ is a cone and therefore we conclude that $\partial \mathcal{B}_{1/2}
(0)\cap W^r \setminus \{(0,0,1/2), (0,0,-1/2)\}$ is a smooth $1$-d manifold
consisting of arcs of great circles. Since $\supp (W)\subset \Wedge$,
we conclude that in fact $\partial \mathcal{B}_{1/2}
(0)\cap W^r \setminus \{(0,0,1/2), (0,0,-1/2)\}$ consists
of finitely many planes (mupltiplicity is allowed) passing through
$\ell$. This proves \eqref{e:un_of_disks}. 

\subsection{Diagonal sequence}
We are now left with the task of showing that $N=1$ 
and $k_1=1$. We will, indeed,
assume the contrary and derive a contradiction. 
In order to do so,
we consider a suitable diagonal sequence $f_{x,\rho_n}
(\Delta^{k_n})$ converging, in the sense of varifolds, to $W$. 
We can select $\Delta^{k_n}$ in such a way that the following
minimality property holds:
\begin{itemize}
\item[(F)] If $\Lambda$ is any surface isotopic to
$\Delta^{k_n}$ with an isotopy fixing $\partial (U\cap B_{\rho_n}
(x))$, then $\haus^2 (\Lambda)\geq \haus^2 (\Delta^{k_n})
- \rho_n^3$.
\end{itemize}
Indeed, we appply the Squeezing Lemma \ref{l:squeeze} with $\beta =
1/(16j)$ and let $n$ be so large that $\rho_n$ is smaller
than the constant $\eps_0$ given by the Lemma. Since $\Delta^k$
is $1/j$--a.m. in $U$, we conclude
therefore that, if we set
$$
M_{k,n} \;:=\; \inf \{\Phi (1, \Delta^k): \Phi\in \Is (U\cap
B_{\rho_n} (x))\}\, ,
$$
then 
$$
\lim_{k\uparrow\infty}
\haus^2 (\Delta^k\cap B_{\rho_n} (x)) - M_{n,k} \;=\; 0\, .
$$
Therefore, having fixed $\rho_n <\eps_0$, we can choose 
$k_n$ so large that $M_{n,k}\geq \haus^2 (\Delta^{k_n}) - \rho_n^3$.

Next, it is convenient to introduce a slightly perturbed
chart $g_x^{\rho_n}$ which maps $\partial U \cap B_{\rho_n} (x)$
onto $\mathcal{B}_1 \cap \{y_1=0\}$ and $\partial \Sigma
\cap B_{\rho_n (x)}$ onto $\ell$. This can be done
in such a way that $f_{x, \rho_n} \circ g_{x, \rho_n}^{-1}$
and $g_{x, \rho_n}\circ f_{x, \rho_n}^{-1}$ converge
smoothly to the identity map as $\rho_n\downarrow 0$.

Having set $\Gamma_n = g_{x, \rho_n} (\Delta^{k_n})$,
we have that $\Gamma_n$ converges to $W$ in the sense of varifolds.
Moreover, our discussion implies that
$\haus^2 (\Delta^{k_n}\cap B_{\rho_n} (x))
\;=\; \rho_n ^2 \haus^2_e (\Gamma_n) + O (\rho_n^3)$. 
Therefore we conclude from (F) that
\begin{itemize}
\item[(F')] Let $m_n$ be the minimum of
$\haus^2_e (\Lambda)$ over all surfaces $\Lambda$
isotopic to $\Gamma_n$ with an isotopy which fixes 
$\partial (U\cap \mathcal{B}_1)$. 
Then $\haus^2_e (\Gamma_n) - m_n \downarrow 0$.
\end{itemize}

We next claim that 
\begin{equation}\label{e:sottosucc1}
\liminf_{n\downarrow 0}
\haus^1_e (\Gamma_n \cap \partial \mathcal{B}_\sigma)
\;\geq\; \pi \sigma \sum_{i=1}^N k_i \qquad
\mbox{for every $\sigma \in ]0,1[$.}
\end{equation}
Indeed, using the squeezing homotopies
of Section \ref{ss:squeeze} it is easy to see 
that 
$$
\haus^2_e (\Gamma_n) - m_n \;\geq\; 
\haus^2_e (\Gamma_n \cap \mathcal{B}_\sigma) 
- \sigma \haus^1_e (\Gamma_n \cap \partial \mathcal{B}_\sigma)
$$
Letting $n\uparrow 0$ and using \eqref{e:un_of_disks} with the convergence of $\Gamma_n$
to the varifold $W$ we conclude
$$
\liminf_{n\uparrow \infty} (\haus^2_e (\Gamma_n) - m_n) \;\geq\;
\sigma \left(\sigma \pi \sum_i k_i - 
\liminf_{n\downarrow 0} \haus^1_e 
(\Gamma_n \cap \partial \mathcal{B}_\sigma)\right)\, .
$$
Therefore, from (F') we conclude \eqref{e:sottosucc1}.

We next claim the existence of a $\sigma\in [1/2,1[$
and a subsequence $n(j)$ such that $\Gamma_{n(j)}\cap
\partial \mathcal{B}_\sigma$ is a smooth $1$-dimensional
manifold with boundary $(0,0, \sigma)- (0,0, -\sigma)$
and, at the same time,
\begin{equation}\label{e:sottosucc2}
\lim_{j\uparrow \infty}
\haus^1_e (\Gamma_{n(j)} \cap \partial \mathcal{B}_\sigma)
\;=\; \pi \sigma \sum_{i=1}^N k_i 
\end{equation}
and 
\begin{equation}\label{e:sottosucc3}
\lim_{j\uparrow \infty}
\haus^1_e (\Gamma_{n(j)} \cap \partial \mathcal{B}_\sigma \setminus K)
\;=\; 0 \qquad \mbox{for every compact $K\subset \mathcal{B}_1
\setminus \bigcup_i P_{\theta_i}$.}
\end{equation}

In fact, let $\{K_l\}_l$ be an exhaustion of $\mathcal{B}_1
\setminus \bigcup_i P_{\theta_i}$ by compact sets.
Observe that, by the convergence of $\Gamma_n$ to $W$,
we get
\begin{equation}\label{e:sommatoria}
\lim_{n\uparrow\infty}
\left(\haus^2_e (\Gamma_n\cap \mathcal{B}_1
\setminus \mathcal{B}_{1/2}) + \sum_{l=0}^\infty
2^{-l} \haus^2_e (\Gamma_n \setminus K_l \cap (\mathcal{B}_1
\setminus \mathcal{B}_{1/2}))\right) \;=\;
\frac{\pi}{8} \sum_i k_i\, .
\end{equation}
Using the coarea formula, we conclude
$$
\int_{1/2}^1 \sigma \pi \sum_i k_i\, d\sigma
\;\geq\; \lim_{n\uparrow\infty} 
\int_{1/2}^1 \left(\haus^1_e (\Gamma_n \cap
\partial \mathcal{B}_\sigma) +
\sum_l 2^{-l} \haus^1_e (\Gamma_n \cap
\partial \mathcal{B}_\sigma\setminus K_l) \right)\, d\sigma\, .
$$
Therefore, by Fatou's Lemma, for a.e. $\sigma\in [1/2,1[$
there is a subsequence $n(j)$ such that
\begin{equation}\label{e:sottosucc4}
\lim_{j\uparrow \infty}
\left(\haus^1_e (\Gamma_n \cap
\partial \mathcal{B}_\sigma) +
\sum_l 2^{-l} \haus^1_e (\Gamma_n \cap
\partial \mathcal{B}_\sigma\setminus K_l) \right) \;=\; \pi
\sigma \sum_{i} k_i \, .
\end{equation}
Clearly, \eqref{e:sottosucc1} and \eqref{e:sottosucc4}
imply \eqref{e:sottosucc2} and \eqref{e:sottosucc3}.
On the other hand, by Sard's Theorem, for a.e. $\sigma\in [1/2, 1[$
every surface $\partial \mathcal{B}_\sigma \cap \Gamma_n$
is a smooth $1$-dimensional submanifold with boundary
$(0,0,\sigma)-(0,0,-\sigma)$.  

\subsection{Disks}\label{ss:disks}
From now on we fix the radius $\sigma$ found above and
we use $\Gamma_n$ in place of $\Gamma_{n(i)}$ (i.e. we do
not relabel the subsequence).
Consider now the Jordan curves $\gamma^n_1, \ldots, \gamma^n_{M(n)}$
forming $\Gamma^n\cap \partial \mathcal{B}^+_\sigma$
(by $\mathcal{B}^+_\sigma$ we understand the half ball
$\mathcal{B}_\sigma \cap \{y_1\geq 0\}$). 

Since $\partial \Gamma^n \cap \{y_1 =0\}$ is given by the
segment $\ell$, there is one curve, say $\gamma^n_1$, which
contains the segment $\ell$. All the others, i.e. the curves
$\gamma^n_i$ with $i\geq 2$ lie in $\partial \mathcal{B}_\sigma
\cap \{y_1>0\}$. 

\begin{figure}[htbp]
\begin{center}
    \input{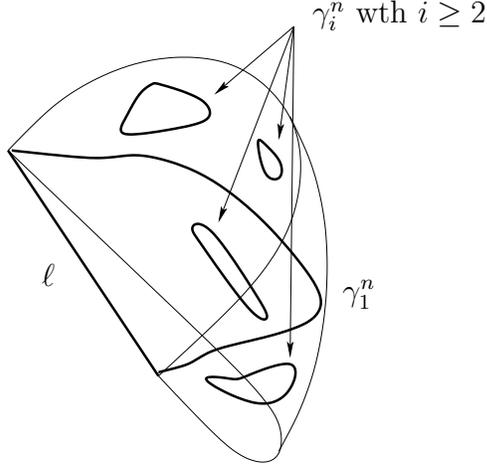}
    \caption{The curves $\gamma^n_i$.}
    \label{f:gira_nongira}
\end{center}
\end{figure}

Next, for every $\gamma^n_l$ consider the number
\begin{equation}\label{e:kappa}
\kappa^n_l \;:=\; 
\inf \{ \haus^2_e (D): \mbox{$D$ is an embedded smooth disk bounding
$\gamma^n_l$}\}\, .
\end{equation}

We will split our proof into several steps. 

\begin{itemize}
\item[(a)]
In the first step, we combine a simple desingularization procedure
with the fundamental result of Almgren and Simon (see \cite{AS}),
to show that
$$
\mbox{there are disjoint embedded
smooth disks $D^n_1, \ldots D^n_{M(n)}$ s.t.}
$$
\begin{equation}\label{e:claim(a)}
\qquad \sum_{i=1}^{M(n)} \haus^2_e (D^n_i)\;\leq\; 
\sum_{i=1}^{M(n)} \kappa^n_i + \frac{1}{n}\, .
\end{equation}
A simple topological observation (see Lemma
\ref{l:technical_topological} in the Appendix \ref{a:isotopie})
shows that, for each fixed $n$, 
there exist isotopies $\Phi_l$
keeping $\partial \mathcal{B}^+_\sigma$ fixed and such that $\Phi_l
(\Gamma_n \cap \mathcal{B}_\sigma)$ converges, in the sense of
varifolds, to the union of the disks $D^n_i$. Combining 
(F'), \eqref{e:claim(a)} and the convergence of
$\Gamma_n$ to the varifold $W$ we then conclude
\begin{equation}\label{e:(a')}
\limsup_{n\uparrow\infty} \sum_{i=1}^{M(n)} \kappa^n_i
\;=\; \pi \sigma^2 \sum_j k_j\, .
\end{equation} 

\item[(b)] In the second step we will show the
existence of a $\delta>0$ (independent of $n$) such that
\begin{equation}\label{e:claim(b)}
\kappa^n_i \;\leq\; \sigma \left(\frac{1}{2}-\delta\right)
\haus^1_e (\gamma^n_i)\qquad \mbox{for every $i\geq 2$ and every $n$.}
\end{equation} 
A simple cone construction shows that
\begin{equation}\label{e:(b')}
\kappa^n_1 \;\leq\; \frac{\sigma}{2} \haus^1_e (\gamma^n_1)\, .
\end{equation}
So, \eqref{e:sottosucc2}, \eqref{e:claim(b)} and
\eqref{e:(b')} imply 
\begin{equation}\label{e:(b'')}
\lim_{n\uparrow \infty} \sum_{i=2}^{M(n)}
\haus^1_e (\gamma^n_i)\;=\; 0\qquad
\mbox{and} \qquad \lim_{n\uparrow \infty} 
\haus^1_e (\gamma^n_1) \;=\; \sigma \sum_j k_j\, ,
\end{equation}
which in turn give
\begin{equation}\label{e:(b''')}
\lim_{n\uparrow\infty} \kappa^n_1 \;=\; \frac{\pi \sigma^2}{2} 
\sum_j k_j\, .
\end{equation}

\item[(c)] We next fix a parameterization $\beta^n_1 : \SS^1 \to 
\partial \mathcal{B}^+_\sigma$ of $\gamma^n_1$ with a multiple
of the arc-length and extract a further
subsequence, not relabeled such that $\beta^n_1$ converges
to a $\beta^\infty$. By \eqref{e:sottosucc3}, the image
of $\beta^\infty$ is then contained in the union of the curves
$P_{\theta_l}\cap \partial \mathcal{B}^+_\sigma$. We will then
show that
\begin{equation}\label{e:claim(c)}
\limsup_{n\downarrow\infty} \kappa^n_1 \;=\; \frac{\pi \sigma^2}{2}\, .
\end{equation}
\eqref{e:(b''')} and \eqref{e:claim(c)} finally show that
$W$ consists of a single half-disk $P_\theta \cap \mathcal{B}^+_1$,
counted once. This will therefore complete the proof.
\end{itemize}

\subsection{Proof of \eqref{e:claim(a)}}
In this step we fix $n$ and prove the claim \eqref{e:claim(a)}.
First of all, note that each $\gamma_i^n$ with $i\geq 2$ is a smooth Jordan
curve lying in $\partial B_\sigma\cap \{y_1>0\}$.

We recall the following result of Almgren and Simon (see \cite{AS}). 

\begin{theorem}\label{t:AlmgrenSimon}
For every curve $\gamma^n_i$ with $i\geq 2$ consider
a sequence of smooth disks $D^j$ with $\haus^2_e (D^j)$ converging
to $\kappa^n_i$. Then a subsequence, not relabeled,
converges, in the sense of varifolds, to an 
embedded smooth disk $D^n_i\subset \mathcal{B}^+_\sigma$ 
bounding $\gamma^n_i$ and such that $\haus^2_e (D^n_i) = \kappa^n_i$.
(The disk is smooth also at the boundary).
\end{theorem}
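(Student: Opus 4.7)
The plan is to invoke the main theorem of Almgren and Simon in \cite{AS}, which asserts essentially this statement for a smooth Jordan curve in $\rn{3}$: an area-minimizing sequence of smooth disks spanning the curve subconverges in the sense of varifolds to a smooth embedded minimal disk. I will first reduce our situation to theirs and then record the extra ingredients needed for the limit disk to lie in the half-ball $\mathcal{B}^+_\sigma$ and to be smooth up to the boundary $\gamma^n_i$.

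First I would extract a subsequence from $\{D^j\}$ converging, in the sense of varifolds, to some integer rectifiable varifold $V^n_i$ with $\|V^n_i\|(\mathcal{B}^+_\sigma) = \kappa^n_i$. Stationarity of $V^n_i$ in $\mathcal{B}^+_\sigma \setminus \gamma^n_i$ follows from the minimizing property together with the fact that, for any $\chi \in C^\infty_c(\mathcal{B}^+_\sigma \setminus \gamma^n_i, \rn{3})$, the isotopies generated by $\chi$ produce competing disks with the same boundary. Next I would verify the convex hull property $\supp(\|V^n_i\|) \subset \overline{\mathcal{B}^+_\sigma}$: since both $\mathcal{B}_\sigma$ and the half-space $\{y_1>0\}$ are convex and $\gamma^n_i$ lies in their intersection with the sphere, the nearest-point projections onto these convex sets are $1$-Lipschitz (and strictly contracting away from the set), so a standard argument as in Lemma \ref{l:convexhull} rules out any mass of $V^n_i$ outside $\overline{\mathcal{B}^+_\sigma}$.

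Having localized $V^n_i$ inside $\overline{\mathcal{B}^+_\sigma}$, away from $\{y_1=0\}$, the situation becomes exactly the one treated by Almgren and Simon. Their $\gamma$-reduction procedure, applied with $\gamma$ suitably small, replaces $D^j$ by strongly $\gamma$-irreducible disks $\tilde D^j$ with $\partial \tilde D^j = \gamma^n_i$, obtained through a uniformly bounded number of surgeries (bounded in terms of $\kappa^n_i$ and $\gamma$), and therefore having the same varifold limit $V^n_i$. The interior theory of \cite{MSY}, together with the stability inherited from the minimizing property and the curvature estimates of Subsection \ref{ss:curvest}, then yields that $V^n_i$ is a smooth embedded stable minimal surface in $\mathcal{B}^+_\sigma$. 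For boundary regularity I would invoke the boundary regularity theorem of Allard (Section 4 of \cite{All2}): the convex hull property shows that every tangent cone to $V^n_i$ at a point $p\in\gamma^n_i$ is contained in a wedge transversal to $T_p\partial\mathcal{B}_\sigma$, and the density at the boundary is $1/2$ by the coarse upper density bound plus strict convexity of $\partial\mathcal{B}_\sigma$, which gives that the tangent cone is a single half-plane and hence smoothness up to $\gamma^n_i$.

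Finally, I would argue that the limit is genuinely a disk. Since each $D^j$ is a disk bounding $\gamma^n_i$, the $\gamma$-reduction preserves ``$\chi \geq 1$'' for the unique component containing $\partial$, so the strongly irreducible representatives $\tilde D^j$ are themselves disks, and their smooth limit $D^n_i$ inherits the disk topology (multiplicity $1$ at the boundary, ruled out higher multiplicity by the density computation above). The main obstacle, as in \cite{MSY}, is the careful bookkeeping of the $\gamma$-reduction to guarantee that no necks form in the limit and that the convergence is smooth up to the boundary; in particular, ruling out that two separate sheets could collapse onto $\gamma^n_i$ from the interior relies crucially on the boundary density being exactly $1/2$, which is where the convex hull property plays its decisive role.
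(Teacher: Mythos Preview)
The paper does not prove this statement at all: it is introduced with the sentence ``We recall the following result of Almgren and Simon (see \cite{AS})'' and is used as a black box. There is therefore no ``paper's own proof'' to compare against; the theorem is the main result of \cite{AS} applied verbatim to the smooth Jordan curve $\gamma^n_i\subset\partial\mathcal{B}_\sigma$ (for $i\geq 2$ this curve lies entirely in the open hemisphere $\{y_1>0\}$), and the only extra observation is that the resulting minimal disk lies in $\mathcal{B}^+_\sigma$, which is immediate from the convex hull property since $\gamma^n_i$ is compactly contained in the half-space $\{y_1>0\}$.

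Your proposal is considerably more than what is required: rather than simply invoking \cite{AS}, you sketch a re-derivation of it via varifold compactness, the $\gamma$-reduction of \cite{MSY}, and Allard's boundary regularity. That outline is broadly in the right spirit (and indeed \cite{AS} and \cite{MSY} share many ideas), but a couple of points are slightly off. First, the $\gamma$-reduction of \cite{MSY} is designed for arbitrary isotopy classes of closed surfaces; the disk case in \cite{AS} uses a somewhat different replacement/comparison argument tailored to disks, and your claim that ``the $\gamma$-reduction preserves $\chi\geq 1$ for the unique component containing $\partial$'' is not quite the mechanism by which \cite{AS} guarantees the limit is a disk. Second, the convex-hull/half-ball discussion is simpler than you make it: for $i\geq 2$ the curve does not touch $\ell$, so there is no wedge analysis or tangent-cone discussion at points of $\{y_1=0\}$ needed here --- the disk simply lives in the convex hull of $\gamma^n_i$, which is a compact subset of $\{y_1>0\}\cap\overline{\mathcal{B}}_\sigma$. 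In short, your instinct to cite \cite{AS} was correct; the rest of your sketch is an attempt to reprove \cite{AS} itself, which the paper does not undertake.
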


For each $\gamma^n_i$ select therefore a disk $D^n_i$ as 
in Theorem \ref{t:AlmgrenSimon}.
We next claim that these disks are all pairwise disjoint. Fix
in fact two such disks. To simplify the notation
we call them $D^1$ and $D^2$ and assume they bound, respectively,
the curves $\gamma_1$ and $\gamma_2$. Clearly, $D^1$ divides
$\mathcal{B}^+_\sigma$ into two connected components $A$ and $B$
and $\gamma_2$ lies in one of them, say $A$. We will
show that $D^2$ lies in $A$. 

Assume by contradiction that $D^2$ intersects $D^1$. 
By perturbing $D^2$ a little
we modify it to a new disk $E^j$ such that $\haus^2_e (E^j)\leq
\haus^2_e (D^2)+1/j$ and $E^j$ intersects $D^1$ transversally
in finitely many smooth Jordan curves $\alpha_m$. 

Each $\alpha_m$ bounds a disk $F^m$ in $E^j$. We call
$\alpha_m$ maximal if it is not contained in any $F^l$. 
Each maximal $\alpha_m$ bounds also a disk $G^m$ in
$D^1$. By the minimality of $D^1$, clearly $\haus^2_e (G^m)
\leq \haus^2_e (F^m)$. We therefore consider 
the new disk $H^j$ given by 
$$
D^2\setminus \left(\;\bigcup_{\alpha_m \mbox{\;\tiny maximal}} 
F^m\right) \cup \bigcup_{\alpha_m \mbox{\;\tiny maximal}} G^m\, .
$$
Clearly $\haus^2_e (H^j)\leq \haus^2_e (E^j) +1/j$. With 
a small perturbation we find a nearby smooth embedded disk
$K^j$ which lies in $A$ and has 
$\haus^2_e (K^j)\leq \haus^2_e (E^j)+1/(2j)$. 
By letting $j\uparrow \infty$ and applying
Theorem \ref{t:AlmgrenSimon}, a subsequence of $K^j$ converges
to a smooth embedded minimal disk $D^3$ in the sense of varifolds.
On the other hand, by choosing $K^j$ sufficiently close
to $H^j$, we conclude that $H^j$ converges as well to
the same varifold. But then, 
$$
D^2\setminus \left(\;\bigcup_{\alpha_m \mbox{\;\tiny maximal}} 
F^m\right) \;\subset\; D^3
$$
and hence $D^2\cap D^3 \neq \emptyset$. Since $D^3$ lies
on one side of $D^2$ (i.e. in $\overline{A}$) this violates
the maximum principle for minimal surfaces.

\medskip

Having chosen $D^n_2, \ldots D^n_{M(n)}$ as above,
we now choose a smooth disk $E^n_1$ bounding $\gamma^n_1$
and with 
$$
\haus^2_e (E^n_1)\;\leq\; \kappa^n_1 + \frac{1}{3n}\, .
$$
In fact we cannot apply directly Theorem \ref{t:AlmgrenSimon}
since in this case the curve $\gamma^n_1$ is not smooth but
has, in fact, two corners at the points $(0,0,\sigma)$
and $(0,0,-\sigma)$. 

$\gamma^n_1$ lies in one connected component $A$ of
$\overline{\mathcal{B}^+_\sigma}$. We now find a new smooth embedded
disk $D^n_1$ with
$$
\haus^2_e (D^n_1) \;\leq\; \kappa^n_1 + \frac{1}{n}\,
$$
and lying in the interior of $A$
This suffices to prove \eqref{e:claim(a)}.

Consider the disks $D'_1, \ldots D'_l$
which, among the $D^n_j$ with $j\geq 2$, bound $A$. 
We first perturb $E^n_1$ to a smooth embedded
$F^n_1$ which intersects all the $D'_j$. We then
inductively modify $E^n_1$ to a new disk which does not
intersect $D'_j$ and looses at most $1/(3ln)$ in area.
This is done exactly with the procedure outlined above
and since the distance between different $D'_j$'s is always positive,
it is clear that while removing the intersections
with $D'_j$ we can do it in such a way that we do not add
intersections with $D'_i$ for $i<j$.

\subsection{Proof of \eqref{e:claim(b)}} In this
step we show the existence of a positive $\delta$, independent
of $n$ and $j$, such that
\begin{equation}\label{e:better_than_cone}
\kappa^n_j \;\leq\; \sigma \left(\frac{1}{2}-\delta\right)
\haus^1_e (\gamma^n_j) \qquad \forall j\geq 2, \forall n\, .
\end{equation}
Observe that for each $\gamma^n_j$ we can construct the cone
with vertex the origin, which is topologically a disk
and achieves area equal to $\textstyle{\frac{\sigma}{2}}
\haus^1_e (\gamma^n_j)$. On the other hand, this cone is clearly
not stationary, because $\gamma^n_j$ is not a circle,
and therefore there is a disk diffeomorphic to the cone
with area strictly smaller than $\textstyle{\frac{\sigma}{2}}
\haus^1_e (\gamma^n_j)$. A small perturbation of this disk yields
a smooth embedded disk $D$ bounding $\gamma^n_j$ such that
\begin{equation}
\haus^2_e (D) \;<\; \frac{\sigma}{2} \haus^1_e (\gamma^n_j)\, .
\end{equation}
Therefore, it is clear that it suffices
to prove \eqref{e:better_than_cone} when $n$ is large enough. 

Next, by the isoperimetric inequality, there is
a constant $C$ such that, any curve $\gamma$ in 
$\partial \mathcal{B}_\sigma$ bounds, in $\mathcal{B}_\sigma$,
a disk $D$ such that
\begin{equation}\label{e:isoperimetrica}
\haus^2_e (D) \;\leq\; C \left(\haus^1_e (\gamma)\right)^2\, .
\end{equation}
Therefore, \eqref{e:better_than_cone} is clear for
every $\gamma^n_j$ with $\haus^1_e (\gamma^n_j) \leq \sigma/4C$.

We conclude that the only way of violating 
\eqref{e:better_than_cone} is to have a subsequence,
not relabeled, of curves $\gamma^n := \gamma^n_{j(n)}$ such that
\begin{itemize}
\item $\haus^1_e (\gamma^n)$ converges to some
constant $c_0>0$;
\item $\kappa^n:= \kappa^n_{j(n)}$ converges to $c_0/2$.
\end{itemize}
Consider next the wedge $\Wedge= \{ |y_2|\leq y_1\tan \theta_0\}$ 
containing the support of the varifold $V$. If we enlarge this
wedge slightly to
$$
\Wedge' \;:=\; \{|y_2|\leq y_1(\tan \theta_0 +1)\}\, ,
$$
we conclude, by \eqref{e:sottosucc3}, that
\begin{equation}\label{e:out}
\lim_{n\uparrow\infty} \haus^1_e (\gamma^n\setminus \Wedge') \;=\; 0\, .
\end{equation}

\medskip

Perturbing $\gamma^n$ slightly we find a nearby smooth 
Jordan curve $\beta^n$ contained in $\partial \mathcal{B}_\sigma
\cap \Wedge'$. Consider next 
\begin{equation}\label{e:minimomu}
\mu^n \;:=\; \min \{ \haus^2_e (D) : \mbox{smooth embedded disk $D$
bounding $\beta^n$}\}\, .
\end{equation}
Given a $D$ bounding $\beta^n$, it is possible to construct
a $D'$ bounding $\gamma^n$ with 
$$\haus^2_e (D')\leq\haus^2_e (D) + o(1).$$

Therefore, we conclude that
\begin{itemize}
\item $\haus^1_e (\beta^n)$ converges to $c_0>0$;
\item $\mu^n$ converges to $\sigma c_0/2$;
\item $\beta^n$ is contained in $\Wedge'$. 
\end{itemize}

\medskip

Consider next the projection of the curve $\alpha = \Wedge'\cap 
\mathcal{B}_\sigma$ on the plane $\pi = y_1y_3$. This projection
is an ellypse bounding a domain $\Omega$ in $\pi$.
Clearly $\alpha$ is the graph of a function
over this ellypse. The function is Lipschitz (actually it is 
analytic except for the two points $(0, \sigma)$ and $(0, -\sigma)$)
and we can therefore find a function $f: \Omega \to \R$ 
which minimizes the area of its graph. This function
is smooth up to the boundary except in the points $(0, \sigma)$
and $(0, -\sigma)$ where, however, it is continuous.
Therefore, the graph of $f$ is an embedded disk. 

We denote by $\Lambda$ the graph of $f$. $\Lambda$
is in fact the {\em unique} area-minimizing current spanning
$\alpha$, by a well-known property of area--minimizing graphs. 
By the classical maximum
principle, $\Lambda$ is contained in the wedge $\Wedge'$ and does
not contain the origin. Consider next the cone $C^n$
having vertex in $0$ and $\beta^n$ as base. Clearly, this cone
intersects $\Lambda$ in a smooth Jordan curve $\tilde{\beta}^n$ and
hence there is a disk $D^n$ in $\Lambda$ bounding this curve.
Moreover, we call $E^n$ the cone constructed on
$\tilde{\beta}^n$ with vertex $0$ (see Figure \ref{f:wedgeminimo}).

\begin{figure}[htbp]
\begin{center}
    \input{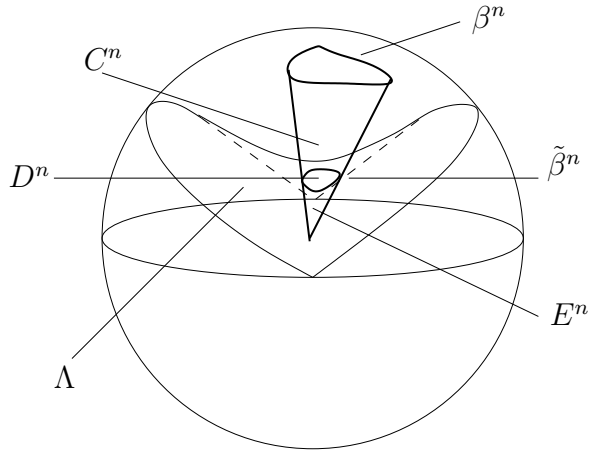}
    \caption{The minimal surface $\Lambda$, the cones
$C^n$ and $E^n$ and the domain $D^n$.}
    \label{f:wedgeminimo}
\end{center}
\end{figure}

Clearly, 
\begin{equation}\label{e:positivo}
\liminf_{n\uparrow \infty} \haus^1_e({\beta}^n) \;>\;0\, .
\end{equation}
Consider next the current given by $D^n \cup (C^n\setminus E^n)$.
These coverge, up to subsequences, to some integer rectifiable current.
Therefore,
the disks $D^n$ converge, in the sense of currents, to a 
$2$--dimensional current $D$ supported in $\Lambda$. It is
easy to check that $D$ must be the current represented
by a domain of $\Lambda$, counted with multiplicity $1$.
Therefore
\begin{equation}\label{e:convergenza_area}
\lim_{n\uparrow\infty} \haus^2_e (D^n) \;=\; \haus^2_e (D)\, .
\end{equation}
Similarly,
$E^n$ converges, up to subsequences, to a current $E$.
By the minimizing
property of $\Lambda$, $\haus^2_e (D) < {\bf M} (E)$,
unless $\haus^2_e (D)= {\bf M} (E) = 0$, where ${\bf M} (E)$
denotes the mass of $E$. 

So, if ${\bf M} (E) > 0$, we then have
$$
\liminf_{n\uparrow \infty} \haus^2_e (E^n)\;\geq\;
{\bf M} (E) \;>\; \haus^2_e (D) \;=\;
\lim_{n\uparrow \infty} \haus^2_e (D^n)\, .
$$
If ${\bf M} (E) =0$, by \eqref{e:positivo}, we conclude
$$
\liminf_{n\uparrow \infty} \haus^2_e (E^n)
\;>\; 0 \;=\; \lim_{n\uparrow \infty} \haus^2_e (D^n)\, .
$$
In both cases we conclude that the embedded disk
$H^n = (C^n\setminus E^n) \cup D^n$ bounds $\beta^n$ and satisfies
\begin{equation}\label{e:dimeno}
\lim_{n\uparrow\infty} \haus^2_e (H^n) \;<\;
\lim_{n\uparrow\infty} \haus^2_e (C^n) \;=\; \frac{\sigma c_0}{2}
\;=\; \lim_{n\uparrow \infty} \mu^n\, .
\end{equation}
Therefore, there exists an $n$ such that $\mu^n > \haus^2_e (H^n)$.
A small perturbation of $H^n$ gives a smooth embedded 
disk bounding $\beta^n$
with area strictly smaller than $\mu^n$. This contradicts
the minimality of $\mu^n$ (see \eqref{e:minimomu})
and hence proves our claim.

\subsection{Proof of \eqref{e:claim(c)}} 
In this final step we show \eqref{e:claim(c)}. Our arguments
are inspired by those of Section 7 in \cite{AS}.

\medskip

Consider the
curve $\gamma^n_1$. Again applying \eqref{e:sottosucc3}
we conclude that, for every compact set 
$$
K\subset \overline{\mathcal{B}}^+_\sigma\setminus \bigcup_i 
P_{\theta_i}
$$
we have
\begin{equation}\label{e:converge}
\lim_{n\uparrow \infty} \haus^1_e (\gamma^n_1\setminus K) \;=\; 0\, .
\end{equation}

Consider next the solid sector $S := \Wedge'\cap
\mathcal{B}_\sigma$. Clearly $\haus^2_e (\partial S) = 
(3\pi -\eta) {\sigma}^2$,
where $\eta$ is a positive constant. 
Clearly a curve contained in $\partial S$ bounds
always a disk with area at most $\pi (\textstyle{\frac{3}{2}}-
\textstyle{\frac{\eta}{2}}) \sigma^2$. 
For large $\gamma^n_1$ we can modify it to a new curve 
$\tilde{\gamma}^n$ contained in $\partial S$, and hence 
find a smooth embedded disk bounding $\tilde{\gamma}^n$ with
area at most $\pi (\textstyle{\frac{3}{2}}
- \textstyle{\frac{\eta}{4}}) 
\sigma^2$. This and \eqref{e:(b''')} implies that
$$
\frac{\pi \sigma^2}{2} \sum_i k_i \;=\; \lim_{n\uparrow\infty}
\kappa^n_1 \;<\; \frac{3\pi}{2} \sigma^2\, .
$$
Therefore we conclude that $\sum_i k_i \leq 2$.

\medskip

Extracting a subsequence, not relabeled, we can assume that
$\gamma^n_1$ converges to an integer
rectifiable current $\gamma$. The intersection of the support
of $\gamma$ with 
$\partial \mathcal{B}_\sigma\setminus \{(0,0,\sigma),
(0,0, -\sigma)$
is then contained in the arcs $\alpha_i := P_{\theta_i} \cap
\partial \mathcal{B}_\sigma$. Therefore if we denote by $[[\alpha_i]]$ 
the current induced by $\alpha_i$ then we have
$$
\gamma \res \partial\mathcal{B}_\sigma \;=\; \sum_i h_i [[\alpha_i]]
$$
where the $h_i$ are integers.

On the other hand, $\gamma^n_1\res \mathcal{B}_\sigma$ is
given by the segment $\ell$. Therefore we conclude
that
$$
\gamma \res \mathcal{B}_\sigma \;=\; [[\ell]]\, .
$$
It turns out that
$$
\gamma \;=\; [[\ell]] + \sum_i h_i [[\alpha_i]]\, 
$$
and of course $\sum_i |h_i| \leq \sum_i k_i$.

Since $\partial \gamma =0$, we conclude that
$$
0\;=\; \partial [[l]] + \sum_i h_i \partial [[\alpha_i]]
\;=\; \delta_P- \delta_N + \sum_i h_i (\delta_N - \delta_P)
$$
where $N= (\sigma, 0, 0)$, $P=(-\sigma, 0, 0)$ and $\delta_X$ 
denotes the Dirac measure in the point $X$. Hence we conclude
$$
\left(1- \sum_i h_i\right) \delta_P - \left(1-\sum_i h_i\right) \delta_N 
\;=\; 0
$$
and therefore $\sum_i h_i = 1$. This implies that $\sum_i |h_i|$ is
odd. Since $\sum_i |h_i|\leq \sum_i k_i \leq 2$, we conclude
$\sum_i |h_i| = 1$. 

Therefore, $\gamma$ consists of the segment $\ell$ and an
arc, say, $\alpha_1$. Clearly,
$\gamma$ bounds $P_{\theta_1}$, which has area $\pi \sigma^2/2$.
Consider next the closed curve $\beta^n$ made by joining $\gamma^n_1
\cap \partial \mathcal{B}_\sigma$ and $-\alpha_1$.
These curves might have self--intersections, but they are close.
Moreover, they have bounded length and they converge, 
in the sense of currents, to the tivial current $\alpha_1-
\alpha_1 =0$.  

There are therefore domains 
$D^n\subset \mathcal{B}^+_\sigma$ such that
$\partial D^n = \beta^n$ and $\haus^2_e (D^n)\downarrow 0$.
It is not difficult to see that the union of the domains $D^n$
and of $P_{\theta_1}$ gives embedded disks $E^n$ bounding $\gamma^n_1$
and with area converging to $\pi \sigma^2/2$ (see Figure 
\ref{f:ultimopezzo}). Approximating these disks $E^n$ with
smooth embedded ones, we conclude that 
$$
\lim_{n\uparrow\infty} \mu_n \;\leq\; \frac{\pi}{2} \sigma^2\, .
$$
This shows that $\sum_i k_i\leq 1$. Hence the varifold
$W$ is either trivial or it consists of at most one half-disk.
Since it cannot be trivial by the considerations of
Subsections \ref{ss:tcones} and \ref{ss:tcones2}, we conclude
that $W$ consists in fact of exactly one half-disk.

\begin{figure}[htbp]
\begin{center}
    \input{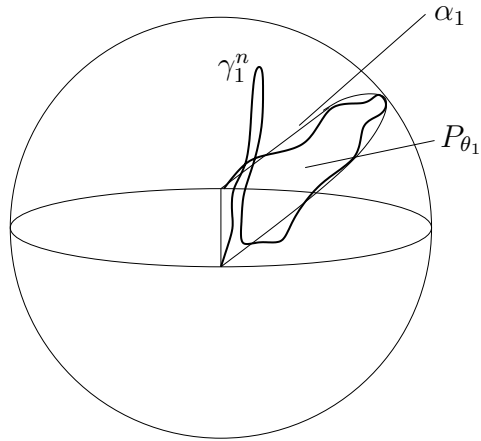}
    \caption{The curves $\gamma^n_1$ and $\alpha_1$.}
    \label{f:ultimopezzo}
\end{center}
\end{figure}

 \section{Proof of Proposition \ref{p:MSYbis}. Part V:
Convergence of connected components}\label{s:MSY5}

In this section we complete the proofs of Proposition \ref{p:MSYbis}
and Proposition \ref{p:MSYtris}. In particular, building on
Corollary \ref{c:multiplicity}, we show the following.

\begin{lemma}\label{l:components}
Let $\Sigma$ and $\Delta^k$ be as in Proposition \ref{p:MSYbis} (or
as in Proposition \ref{p:MSYtris}) and consider their varifold limit
$V$. According to Lemma \ref{l:interior}, Lemma \ref{l:boundary} and
Corollary \ref{c:multiplicity}, $V$ is a smooth stable minimal
surface with boundary $\partial \Delta = \partial \Sigma$ and with
multiplicity $1$. Let $\Gamma_1, \ldots, \Gamma_N$ be the connected
components of $\Delta$.

If $\tilde{\Delta}^k$ is an arbitrary union of connected components
of $\Delta^k$ which converges, in the sense of varifolds, to
a $W$, then $W$ is given by 
$\Gamma_{i_1}\cup \ldots \cup \Gamma_{i_l}$ for some $1\leq
i_1<i_2<\ldots < i_l\leq N$.
\end{lemma}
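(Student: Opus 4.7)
The strategy is to show that, for each $i$, the surfaces $\Delta^k$ eventually contain a connected graphical sheet $\Gamma_i^k$ over $\Gamma_i$ which converges smoothly to $\Gamma_i$, while the rest of $\Delta^k$ has vanishing area. Since a connected sheet cannot be split among distinct components of $\Delta^k$, each $\Gamma_i^k$ sits in a single connected component $C_i^k$. Any selection of components therefore contains each $C_i^k$ entirely or misses it, so $W$ must be a union of $\Gamma_i$'s.

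\textbf{Step 1 (Local graphical structure).} Fix $p\in\Gamma_i$ (possibly on $\partial\Gamma_i=\partial\Sigma$) and a small ball $B=B_\eps(p)$ in which Lemma \ref{l:squeeze} applies with $\beta>0$ arbitrarily small. For each $k$, let $\tilde{\Delta}^{k,j}=\varphi^k_j(1,\Delta^k)$ be a minimizing sequence for $\Is(B\cap U)$ starting from $\Delta^k$. Proposition \ref{MSYinterno} (if $p\in U$) or Lemma \ref{l:boundary} together with Corollary \ref{c:multiplicity} (if $p\in\partial U$) show that $\tilde{\Delta}^{k,j}\cap B$ converges, as $j\to\infty$, to a smooth stable minimal surface $\Sigma^k$. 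Since $\Delta^k$ and $\tilde{\Delta}^{k,j}$ coincide outside $B$ and have comparable areas by the squeezing lemma, and both converge in varifold sense to the multiplicity-$1$ surface $V=\Delta$ near $p$, a diagonal choice shows $\Sigma^k\to\Gamma_i$ in $B_{\eps/2}(p)$ with multiplicity $1$. By Schoen's curvature estimates \eqref{e:curvaclaim} the convergence is smooth, so $\Sigma^k\cap B_{\eps/2}(p)$ is eventually a single smooth graph over $\Gamma_i\cap B_{\eps/2}(p)$.

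\textbf{Step 2 (Sheets inside $\Delta^k$).} The squeezing isotopies from Lemma \ref{l:squeeze} preserve connected components, fix a neighborhood of $\partial B$, and change area by at most $\beta$. Choosing $j(k)$ so that $\tilde{\Delta}^k:=\tilde{\Delta}^{k,j(k)}$ is smoothly graphical over $\Gamma_i$ in $B_{\eps/2}(p)$ and then transferring back via the inverse isotopy, the multiplicity-$1$ mass convergence $\|\Delta^k\|(B_{\eps/4}(p))\to\haus^2(\Gamma_i\cap B_{\eps/4}(p))$ forces $\Delta^k\cap B_{\eps/4}(p)$ to consist, for $k$ large, of exactly one smooth sheet over $\Gamma_i\cap B_{\eps/4}(p)$ plus a residual component of vanishing area. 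Covering $\Gamma_i$ by finitely many such balls and gluing the local graphs (which agree on overlaps by uniqueness of the multiplicity-$1$ smooth limit) produces a connected smooth surface $\Gamma_i^k\subset\Delta^k$ that is graphical over $\Gamma_i$ and converges smoothly to $\Gamma_i$. Being connected, $\Gamma_i^k$ lies in a single connected component $C_i^k$ of $\Delta^k$, and the $C_i^k$'s are pairwise distinct for $k$ large, as the $\Gamma_i$'s are pairwise disjoint.

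\textbf{Step 3 (Vanishing remainder and conclusion).} Since $\haus^2(\Delta^k)\to\sum_i\haus^2(\Gamma_i)$ and $\haus^2(\Gamma_i^k)\to\haus^2(\Gamma_i)$, we obtain
$$
\haus^2\bigl(\Delta^k\setminus\textstyle\bigcup_i\Gamma_i^k\bigr)\;\longrightarrow\;0,
$$
so every connected component of $\Delta^k$ other than the $C_i^k$'s has area tending to $0$, and $\haus^2(C_i^k\setminus\Gamma_i^k)\to 0$. Thus each $C_i^k$ converges in varifold sense to $\Gamma_i$. Given the prescribed selection $\tilde{\Delta}^k$ of components, pass to a subsequence so that, for each $i\in\{1,\ldots,N\}$, either $C_i^k\subset\tilde{\Delta}^k$ for all large $k$, or $C_i^k\cap\tilde{\Delta}^k=\varnothing$ for all large $k$; let $I=\{i_1,\ldots,i_l\}$ be the first-type indices. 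Then $\tilde{\Delta}^k=\bigcup_{i\in I}C_i^k\cup R^k$ where $\haus^2(R^k)\to 0$, which gives $W=\Gamma_{i_1}\cup\cdots\cup\Gamma_{i_l}$, as required.

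\textbf{Main obstacle.} The technical heart of the argument is Step 2: passing the smooth graphical multiplicity-$1$ structure from the full replacements $\tilde{\Delta}^{k,j(k)}$ back to the almost-minimizing surfaces $\Delta^k$ themselves. This relies crucially on the squeezing lemma (to keep the area increment under control and preserve connectedness) together with the multiplicity-$1$ property of $V$, which rules out collapse of multiple sheets. The boundary case, where $p\in\partial\Sigma$, is more subtle than the interior and uses the boundary regularity Lemma \ref{l:boundary} in place of Proposition \ref{MSYinterno}, working with half-graphs over $\Gamma_i$ near its boundary.
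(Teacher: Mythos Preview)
Your approach is genuinely different from the paper's, and the gap you flag as the ``main obstacle'' is real and not closed by your argument.

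The paper's proof avoids any analysis of the fine structure of the $\Delta^k$'s. It passes instead to integer rectifiable currents: after extracting a subsequence, $\tilde{\Delta}^k$ converges as a current to some $T$ with $\partial T\subset\partial U$, and one has $\|T\|\leq\|W\|\leq\|V\|=\sum_i\haus^2\res\Gamma_i$. Since $T$ is an integer current supported in $\bigcup_i\Gamma_i$ with boundary on $\partial U$, the constancy theorem gives $T=\sum_i h_i[[\Gamma_i]]$; the fact that $\partial\tilde{\Delta}^k$ is a union of components of $\partial\Sigma$ with positive orientation, together with the maximum principle (each $\Gamma_i$ has nonempty boundary), forces $h_i\in\{0,1\}$. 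Running the same argument on $\Delta^k\setminus\tilde{\Delta}^k$ and using $\partial\Delta^k=\partial\Sigma$ gives $h_i+h_i'=1$, hence $\|T\|+\|T'\|=\|V\|=\|W\|+\|W'\|$, which pins down $\|W\|=\|T\|$ exactly. No regularity of $\Delta^k$ beyond varifold convergence is used.

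Your Step 2 tries to show much more: that $\Delta^k$ is, for large $k$, itself a single smooth sheet over $\Gamma_i$ (plus negligible pieces). This does not follow from your argument. The replacement $\Sigma^k$ is eventually a graph over $\Gamma_i$ by Schoen's estimates, but you never get graphicality of any $\tilde{\Delta}^{k,j}$ for finite $j$ (varifold closeness to $\Sigma^k$ does not give it), and even if you did, ``transferring back via the inverse isotopy'' destroys it: the isotopies produced by the squeezing lemma are large deformations (a cone--and--back construction), not $C^1$--small, so the preimage of a graph is just \emph{some} surface isotopic to a disk, not a graph. Multiplicity-$1$ mass convergence of $\Delta^k$ alone cannot repair this---Allard--type conclusions would require $\Delta^k$ to be nearly stationary, which it is not. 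Without the graphical structure, your gluing of local main components across overlapping balls (``which agree on overlaps by uniqueness of the multiplicity-$1$ smooth limit'') has no mechanism: you have not shown that the main component of $\Delta^k\cap B_1$ and the main component of $\Delta^k\cap B_2$ lie in the same connected component of $\Delta^k$. The current argument in the paper sidesteps all of this.
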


\begin{proof}
This lemma is indeed a simple
consequence of some known facts in geometric
measure theory. Fix 
a sequence $\tilde{\Delta}^k$ and a $W$ as in the
statement of the lemma. Note
that $\partial \tilde{\Delta}^k\subset \partial \Delta^k
= \partial \Sigma$. 

We can therefore apply 
the compactness of integer rectifiable currents and,
after a further extraction of subsequence, 
assume that the $\tilde{\Delta}^k$ are converging, as currents,
to an integer rectifiable current $T$ with boundary $\partial T$
which is the limit of the boundaries
$\partial \tilde{\Delta}^k$. Since
these boundaries are all contained in $\partial U$,
we conclude that $\partial T$ is also contained in $\partial U$. 
It is a known
fact in geometric measure theory that 
\begin{equation}\label{e:curren<varifold}
\|T\|\;\leq\; \|W\|\, .
\end{equation}
On
the other hand, 
\begin{equation}\label{e:monotvarifold}
\|W\|\;\leq\; \|V\|\;\leq\; \sum_i \haus^2 \res \Gamma_i\, .
\end{equation}
So $T$ is actually supported in the current given by the
union of the currents induced by the $\Gamma_i$'s, which
we denote by $[[\Gamma_i]]$. Since $\partial T$ and
$\partial \Gamma_i$ lie both on $\partial U$, a second
standard fact in geometric measure theory imply the existence
of integers $h_1, \ldots, h_N$ such that 
$$
T \;=\; \sum_{i=1}^N h_i [[\Gamma_i]]
$$
Therefore, 
\begin{equation}\label{e:massadiT}
\|T\| \;=\; \sum_i |h_i| \haus^2\res \Gamma_i\, .
\end{equation}
Hence, \eqref{e:curren<varifold},
\eqref{e:monotvarifold} and \eqref{e:massadiT}
give $h_i\in \{-1, 0, 1\}$. On the other hand,
since each $\partial \tilde{\Delta}^k$ is the union of
connected components of $\partial \Sigma$ (with positive orientation),
it turns out that $\partial T$ is the union of the currents
induced by some connected components of $\partial \Sigma$,
with {\em positive} orientation. Moreover, since $U$ is
a sufficiently small ball, by the maximum principle each 
surface $\Gamma_i$ must have nontrivial boundary. 
Hence, we conclude that $h_i\in \{0,1\}$. 

Arguing in the same way, we conclude that $\Delta^k\setminus
\tilde{\Delta}^k$ converge, as currents, to a current $T'$,
and, as varifolds, to a varifold $W'$ with the properties that
\begin{equation}
T' \;=\; \sum_{i=1}^N h'_i [[\Gamma_i]]
\end{equation}
\begin{equation}
\|T'\|\leq\|W'\|
\end{equation}
and $h'_i\in \{0,1\}$. Since $W+W' = V$, (and hence 
$\|W\|+\|W\|'=\|V\|$), we 
conclude that $h''_i = h'_i + h_i \in \{0,1\}$. On the other
hand, $\Delta^k$ converges, in the sense of currents, 
to $T+T'$, which is given by
\begin{equation}\label{e:somma}
T+T' \;=\; \sum_i (h_i+h'_i) [[\Gamma_i]]\, .
\end{equation}
Moreover, since $\partial \Delta^k = \partial \Sigma$,
\begin{equation}
[[\partial \Sigma]]\;=\; \partial (T+T') \;=\; \sum_i (h_i+h'_i)
[[\partial \Gamma_i]] \, .
\end{equation}
Since the $\partial \Gamma_i$ are all nonzero, disjoint
and contained in $\partial \Sigma$, we conclude that $h_i+h'_i=1$
for every $i$. 

Summarizing, we conclude that $\|V\| = \|W\|+\|W'\|\geq 
\|T\|+\|T'\|\geq \|T+T'\|= \|V\|$. This implies that $\|W\|+\|W'\|
= \|T\|+ \|T'\|$ and hence that $\|W\|=\|T\|$. Therefore
$$
\|W\| \;=\; \sum_i h_i \haus^2\res \Gamma_i
$$
and since $h_i\in \{0,1\}$, this last claim concludes the proof.
\end{proof}

\section{Considerations on \eqref{e:correct}
and \eqref{e:stronger}}
\label{s:discuss}

\subsection{Coverings}
In this subsection we discuss why \eqref{e:correct}
seems ultimately the correct estimate.
Fix a sequence $\{\Sigma^j_{t_j}\}$ which is $1/j$--a.m. in suffciently
small annuli and
assume for simplicity that each element is a smooth embedded surface
and that the varifold limit is given by
$$
\Gamma \;=\; \sum_{\Gamma^i\in \mathcal{O}}
 n_i \Gamma^i + \sum_{\Gamma^i\in \mathcal{N}} n_i \Gamma^i\,.
$$
Then, one expects that, after appropriate surgeries (which can only
bring the genus down) $\Sigma^j_{t_j}$ split into three groups.
\begin{itemize}
\item The first group consists of
$$
m_1\;=\;\sum_{\Gamma^i\in \mathcal{O}} n_i
$$
surfaces, each isotopic to a $\Gamma^i\in \mathcal{O}$;
\item The second group consists of
$$
m_2\;=\; \frac{1}{2}\sum_{\Gamma^i\in \mathcal{N}} n_i\,
$$
surfaces, each isotopic to the boundary of a regular tubular
neighborhood of $\Gamma^i\in \mathcal{N}$, (which is a double cover
of $\Gamma^i$);
\item The sum of the areas of the the third group
vahishes as $j\uparrow \infty$.
\end{itemize}
As a consequence one would conclude that $n_i$ is even whenever
$\Gamma^i\in \mathcal{N}$ and that \eqref{e:correct} holds.

\medskip

The type of convergence described above is exactly the one proved by
Meeks, Simon and Yau in \cite{MSY} for sequences of surfaces which
are minimizing in a given isotopy class. The key ingredients of
their proof is the $\gamma$--reduction and the techniques set forth
by Almgren and Simon in \cite{AS} to discuss sequences of minimizing
disks. However, in their situation there is a fundamental advantage:
when the sequence $\{\Sigma^j\}$ is minimizing in a given isotopy
class, one can perform the $\gamma$--reduction ``globally'', and
conclude that, after a finite number of surgeries which do not
increase the genus, there is a constant $\sigma>0$ with the
following property:
\begin{itemize}
\item For any ball $B$ with radius $\sigma$, each
curve in $\partial B\cap \Sigma^j$ bounds a small disk in
$\Sigma^j$.
\end{itemize}

In the case of min--max sequences, their weak $1/j$--almost
minimizing property on subsets of the ambient manifold allows to
perform the $\gamma$--reduction only to surfaces which are
appropriate local modifications of the $\Sigma^j$'s, see the
Squeezing Lemma of Section \ref{s:MSY2} and the modified 
$\gamma$--reduction of Section \ref{s:MSY2}. Unfortunately,
the size of the open sets where this can be done depends on $j$.
In order to show that the picture above holds, it seems necessary
to work directly in open sets of a fixed size.

\subsection{An example} In this section we show that
\eqref{e:stronger} cannot hold for sequences
which are $1/j$--a.m.. Consider in particular the 
manifold $M = ]-1,1[\times \SS^2$ with the
standard product metric. 
We parameterize $\SS^2$ with 
$\{|\omega|=1 : \omega\in {\bf R}^3\}$.  
Consider on $M$ the orientation--preserving
diffeomorphism $\varphi: (t, \omega) \mapsto (-t, -\omega)$
and the equivalence relation $x\sim y$ if $x=y$ or 
$x= \varphi (y)$. Let $N= M/_\sim$ be the quotient manifold,
which is an oriented Riemannian manifold, and consider 
the projection $\pi: M\to N$, which is a local isometry.
Clearly, $\Gamma:= \pi (\{1\}\times \SS^2)$ is an embedded  
$2$--dimensional (real) projective plane. Consider a sequence
$t_j\downarrow 1$. Then, each $\Lambda^j:=\{t_j\}\times \SS^2$ is a
totally geodesic surface in $M$ and, therefore, $\Sigma^j=
\pi (\Lambda_j)$ is totally
geodesic as well. Let $r$ be the injectivity radius
of $N$ and consider a smooth open set $U\subset N$ 
with diameter smaller than $r$ such that
$\partial U$ intersects $\Sigma^j$ transversally.
Then $\Sigma^j\cap U$ is the unique
area--minimizing surface spanning $\partial U\cap \Sigma^j$.

Hence, the sequence of surfaces $\{\Sigma^j\}$
is $1/j$--a.m. in sufficiently
small annuli of $N$. Each $\Sigma^j$ is a smooth
embedded minimal sphere and $\Sigma^j$ converges,
in the sense of varifolds, to $2 \Gamma$. 
Since $\gen (\Sigma^j) = 0$
and $\gen (\Gamma)=1$, the inequality
$$
\gen (\Gamma) \;\leq\; \liminf_{j\uparrow \infty}
\gen (\Sigma^j)\, ,
$$
which corresponds to \eqref{e:stronger}, fails in this case.

\appendix

\section{Proof of Lemma \ref{l:monot}}\label{a:monot}

\begin{proof} Let $\Sigma$ be a smooth minimal surface
with $\partial \Sigma\subset \partial B_\sigma (x)$, where
$\sigma<r_0$ and $r_0$ is a positive constant to
be chosen later. We recall that, for every vector field 
$X\in C^1_c (B_\sigma (x))$, we have
\begin{equation}\label{e:staz}
\int_{B_\sigma (x)} {\rm div}_\Sigma X \;=\; 0\, .
\end{equation}
We assume $r_0< {\rm Inj}\, (M)$ (the injectivity radius
of $M$) and we use geodesic coordinates centered at $x$.
For every $y\in B_\sigma (x)$ we denote by $r (y)$
the geodesic distance between $y$ and $x$. Recall that
$r$ is Lipschitz on $B_\sigma (x)$ and $C^\infty$ in 
$B_\sigma (x)\setminus \{x\}$,
and that $|\nabla r|=1$, where $|\nabla r|= 
\sqrt{g (\nabla r, \nabla r)}$.

We let $\gamma\in C^1 ([0,1])$ be a cut-off function,
i.e. $\gamma=0$ in a neighborhood of $1$ and $\gamma=1$ in
a neighborhood of $0$. We set
$X = \gamma (r) r \nabla r = \gamma (r) \nabla \frac{|r|^2}{2}$.
Thus, $X\in C^\infty_c (B_\sigma (x))$
and from \eqref{e:staz} we compute
\begin{equation}\label{e:staz2}
0\;=\; \int_\Sigma \gamma (r)\, {\rm div}_\Sigma\, (r \nabla r)
+ \int_\Sigma r\, \gamma' (r) \sum_i \partial_{e_i} r\, g (\nabla r, e_i)\, ,
\end{equation}
where $\{e_1, e_2\}$ is an orthonormal frame on $T \Sigma$.
Clearly 
\begin{equation}\label{e:conti1}
\sum_i \partial_{e_i} r\, g (\nabla r, e_i)=
\sum_i (\partial_{e_i} r)^2 = |\nabla_\Sigma r|^2 = |\nabla r|^2
- |\nabla^\perp r|^2 = 1 - |\nabla^\perp r|^2\, ,
\end{equation}
where $\nabla^\perp r$ denotes
the projection of $\nabla r$ on the normal bundle to $\Sigma$. 
Moreover, let $\nabla^e$ be the euclidean connection
in the geodesic coordinates and consider a $2$-d plane $\pi$ in $T_y M$,
for $y\in B_\sigma (x)$. Then
$$
{\rm div}_\pi\, (r (y)\, \nabla r (y))
- {\rm div}^e_\pi\, (|y|\, \nabla^e |y|) = O (|y|) = O (\sigma)\, .
$$
Since ${\rm div}^e_\pi\, (|y|\, \nabla^e |y|)=2$, 
we conclude the existence of a constant $C$ such that
\begin{equation}\label{e:errore}
\left|\int_\Sigma \gamma (r) {\rm div}_\Sigma (r \nabla r)
- 2 \int_\Sigma \gamma (r)\right|\;\leq\; C \|\gamma\|_\infty \sigma \haus^2
(\Sigma\cap B_\sigma (x))\, .
\end{equation}
Inserting \eqref{e:conti1} and \eqref{e:errore} in
\eqref{e:staz2}, we conclude
\begin{equation}\label{e:simon}
\int_\Sigma 2\, \gamma (r) + \int_\Sigma r\, \gamma' (r)
\;=\; \int_\Sigma r\, \gamma' (r)\, |\nabla^\perp r|^2 + {\rm Err}
\end{equation}
where, if we test with functions $\gamma$ taking values in $[0,1]$,
we have
\begin{equation}\label{e:errore2}
|{\rm Err}| \leq C \sigma \haus^2 (\Sigma\cap B_\sigma (x))\, .
\end{equation}
We test now \eqref{e:simon} with functions
taking values in $[0,1]$ and approximating the characteristic functions
of the interval $[0,\sigma]$. Following the computations of pages 83-84
of \cite{Si}, we conclude
\begin{equation}
\left.\frac{d}{d\rho} \left( \rho^{-2} \haus^2 (\Sigma\cap B_\rho (x))\right)
\right|_{\rho =\sigma} \;=\; \left.\frac{d}{d\rho} \left(\int_{\Sigma\cap B_\rho (x)}
\frac{|\nabla^\perp r|^2}{r^2}\right)\right|_{\rho=\sigma}
+ \sigma^{-3} {\rm Err}\, .
\end{equation}
Straightforward computations lead to
\begin{equation}\label{e:conti2}
\haus^2 (\Sigma\cap B_\sigma (x))
\;=\; \underbrace{\frac{\sigma}{2}\left.\frac{d}{d\rho} \left(\haus^2 (\Sigma\cap B_\rho (x))\right)
\right|_{\rho =\sigma} - \frac{\sigma^3}{2} 
\left.\frac{d}{d\rho} \left(\int_{\Sigma\cap B_\rho (x)}
\frac{|\nabla^\perp r|^2}{r^2}\right)\right|_{\rho=\sigma}}_{=(A)}
+ {\rm Err}\, .
\end{equation}
Moreover, by the coarea formula, we have
\begin{eqnarray}
(A) &=& \frac{\sigma}{2} \int_{\partial B_\sigma 
(x)\cap \Sigma} \frac{1}{|\nabla_\Sigma r|}
- \frac{\sigma^3}{2} \int_{\partial B_\sigma (x) \cap \Sigma}
\frac{|\nabla^\perp r|^2}{\sigma^2 |\nabla_\Sigma r|}
\;=\;\frac{\sigma}{2}\int_{\partial \Sigma} \frac{1-|\nabla^\perp r|^2}{|\nabla_\Sigma r|}
\nonumber\\
&=&\frac{\sigma}{2}\int_{\partial \Sigma} |\nabla_\Sigma r|
\;\leq\; \frac{\sigma}{2}\Length\, 
(\partial \Sigma)\label{e:(A)}\, .
\end{eqnarray}
Inserting \eqref{e:(A)} into \eqref{e:conti2}, we conclude
that
\begin{equation}
\haus^2 (\Sigma\cap B_\sigma (x))
\;\leq\; \frac{\sigma}{2}\Length\, 
(\partial \Sigma) + |{\rm Err}|\, ,
\end{equation}
which, taking into account \eqref{e:errore2}, becomes
\begin{equation}
(1- C \sigma)\haus^2 (\Sigma\cap B_\sigma (x))
\;\leq\; \frac{\sigma}{2}\Length\, 
(\partial \Sigma)\, .
\end{equation}
So, for $r_0 < \min\{\Inj (M), (2C)^{-1}\}$ we get
\eqref{e:monot}.
\end{proof}

\section{Proof of Lemma \ref{l:technical_projection}}\label{a:proj}

\begin{proof} Let $d_e (y)$ be the euclidean distance of $y$ to
$\overline{U}$ and $d (y)$ the geodesic distance of $y$ to
$\overline{f(U)}$. The function $d_e$ is $C^2$ and uniformly 
convex on the closure of $\mathcal{B}_1\setminus U$. Therefore,
if $\eps_0$ is sufficiently small, the function $d$ is uniformly
convex on the closure of $B_\eps (x)\setminus f(U)$. 
Let now $y_0\in B_\eps (x)\setminus \overline{f (U)}$. 
In order to find $\pi (x)$ it suffices to follow the flow line
of the ODE $\dot{y} = - \nabla d (y)/|\nabla d (y)|^2$, with
initial condition $y (0)=y_0$, until the line hits $\overline{f(U)}$.
Thus, the inequality $|\nabla \pi (x)|<1$ follows from
Lemma 1 of \cite{bangert}. On the other hand, $\pi (x) =x$
on $\overline{f(U)}$, and therefore the map is Lipschitz with constant
$1$. 
\end{proof}

\section{A simple topological fact}\label{a:isotopie}

We summarize the topological fact used in (a) of
Section \ref{ss:disks} in the following lemma.

\begin{lemma}\label{l:technical_topological}
Condider a smooth $2$--dimensional surface $\Sigma \subset
\mathcal{B}_1$ with smooth boundary $\partial \Sigma \subset
\partial \mathcal{B}_1$. Let $\Gamma\subset \mathcal{B}_1$ 
is a smooth surface with $\partial \Gamma = \partial \Sigma$
consisting of disjoint embedded disks. Then there exists
a smooth map $\Phi : [0,1[\times \overline{\mathcal{B}}_1 
\to \overline{\mathcal{B}}_1$
such that
\begin{itemize}
\item[(i)] $\Phi (0, \cdot)$ is the identity and $\Phi (t, \cdot)$
is a diffeomorphism for every $t$;
\item[(ii)] For every $t$ there exists a neighborhood $U_t$ of
$\partial \mathcal{B}_1$ such that $\Phi (t,x)=x$ for every
$x\in U_t$;
\item[(iii)] $\Phi (t, \Sigma)$ converges to $\Gamma$ in the 
sense of varifolds as $t\to 1$.
\end{itemize}
\end{lemma}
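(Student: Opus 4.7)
I propose to construct $\Phi$ in two stages. The first matches $\Sigma$ with $\Gamma$ near $\partial\mathcal{B}_1$ via an ambient isotopy; the second contracts the ``excess topology'' of $\Sigma$ into a $1$-dimensional subset, forcing the $\haus^2$-measure of $\Phi(t,\Sigma)\setminus\Gamma$ to vanish in the varifold limit.

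For the first stage, in a collar $C_\eta=\{1-\eta\le|x|<1\}$ both $\Sigma$ and $\Gamma$ are smooth $2$-submanifolds transverse to $\partial\mathcal{B}_1$ with common boundary $\partial\Sigma=\partial\Gamma$. The tubular neighborhood theorem and the isotopy extension theorem yield a smooth family $\psi_s$, $s\in[0,1/2]$, of diffeomorphisms of $\overline{\mathcal{B}}_1$, supported in $C_\eta$ and fixing a neighborhood of $\partial\mathcal{B}_1$, such that $\psi_{1/2}(\Sigma)\cap C_\eta=\Gamma\cap C_\eta$. Set $\Sigma_1:=\psi_{1/2}(\Sigma)$, and after a further small perturbation assume that $\Sigma_1$ meets $\Gamma$ transversally in $\mathcal{B}_{1-\eta}$.

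For the second stage, the integer $2$-current $[\Sigma_1]-[\Gamma]$ is a cycle (its boundary vanishes since $\partial\Sigma_1=\partial\Gamma$) supported in $\overline{\mathcal{B}}_{1-\eta}$, and since $H_2(\overline{\mathcal{B}}_1;\mathbb{Z})=0$ it bounds an integer $3$-chain $R$ whose support is compactly contained in $\overline{\mathcal{B}}_{1-\eta/2}$. I then choose a smooth $1$-complex $S$ in the interior of $\supp R$ onto which $\supp R$ smoothly deformation retracts, together with a smooth vector field $X$ on $\overline{\mathcal{B}}_1$, supported in a small neighborhood of $\supp R$, vanishing on $\Gamma\cup C_\eta$, whose flow pointwise fixes $\Gamma$ while carrying $\Sigma_1\setminus\Gamma$ into shrinking tubular neighborhoods of $S$. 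For $s\in[1/2,1[$ let $\phi_s$ denote this flow at a time $\tau(s)\to+\infty$ as $s\uparrow 1$. Each $\phi_s$ is a diffeomorphism of $\overline{\mathcal{B}}_1$ fixing $\Gamma\cup C_\eta$, and by tuning the transverse contraction of $X$ near $S$, the $\haus^2$-measure of $\phi_s(\Sigma_1\setminus\Gamma)$ tends to zero as $s\uparrow 1$.

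Setting $\Phi(t,\cdot)=\psi_{2t}$ on $[0,1/2]$ and $\Phi(t,\cdot)=\phi_t\circ\psi_{1/2}$ on $[1/2,1[$ yields an isotopy satisfying (i) and (ii) by construction; (iii) follows since $\Phi(t,\Sigma)$ coincides with $\Gamma$ on $C_\eta$, and outside $C_\eta$ its image concentrates in a shrinking neighborhood of $\Gamma\cup S$ with vanishing $\haus^2$-measure away from $\Gamma$. The most delicate point is the construction of $X$ in Stage 2 so that its flow both fixes $\Gamma$ pointwise (ensuring the limiting varifold is $\Gamma$ with multiplicity $1$, not a superposition) and collapses $\Sigma_1\setminus\Gamma$ with $\haus^2$-measure decaying to $0$; this requires controlling the Jacobian of $\phi_s|_{\Sigma_1}$ so that the transverse contraction of $X$ near the $1$-dimensional $S$ dominates any tangential stretching along $S$.
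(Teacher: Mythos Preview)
Your Stage 2 contains a genuine gap. You require the vector field $X$ to vanish on $\Gamma\cup C_\eta$, so the flow $\phi_s$ fixes both $\Gamma$ and the collar $C_\eta$ pointwise. In particular $\phi_s$ fixes $\partial \mathcal{B}_{1-\eta}$, and hence $\phi_s(\Sigma_1\cap \mathcal{B}_{1-\eta})$ always spans the \emph{fixed} curve system $\Sigma_1\cap\partial \mathcal{B}_{1-\eta}=\Gamma\cap\partial \mathcal{B}_{1-\eta}$. Any surface in $\overline{\mathcal{B}}_{1-\eta}$ with this nontrivial boundary has area bounded below by a positive constant, so your claim that $\haus^2\big(\phi_s(\Sigma_1\setminus\Gamma)\big)\to 0$ is impossible: after your transversality perturbation, $\Sigma_1\setminus\Gamma$ coincides (up to $\haus^2$--null sets) with $\Sigma_1\cap \mathcal{B}_{1-\eta}$, whose image under $\phi_s$ can never have small area. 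And even if it could, the varifold limit of $\phi_s(\Sigma_1)$ would then be $\Gamma\cap C_\eta$, not $\Gamma$, since the only part of $\Sigma_1$ lying on $\Gamma$ is the collar annulus. The deformation--retraction of $\supp R$ onto a $1$--complex (which is in any case unjustified for a general $3$--region) does not help here: a flow cannot change the topology of $\Sigma_1$, and collapsing a handle while keeping its feet fixed on $\Gamma$ forces the image to acquire positive area near $\Gamma$, not zero.

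The mechanism you are missing is that the topology of $\Sigma$ must actually be \emph{simplified} in the varifold limit, and the only way to do this through isotopies is neck--pinching: squeeze an essential annulus in $\Sigma$ to a point, so that in the limit it becomes two disks (plus a $1$--dimensional arc of zero $\haus^2$--measure). This is precisely the surgery of Definition~\ref{d:surgery}, and the paper's proof is organized around it. First one shows that every surface obtained from $\Sigma$ by such surgeries lies in the class of attainable varifold limits; then the Loop Theorem (incompressible surfaces in a ball have trivial $\pi_1$) guarantees that iterated surgery reduces $\Sigma$ to a union of disjoint disks $\Gamma'$. Finally, since $\Gamma'$ and $\Gamma$ are both disk systems in $\mathcal{B}_1$ with the same boundary, a standard innermost--disk argument produces an isotopy (in the limiting varifold sense) from $\Gamma'$ to $\Gamma$. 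Your Stage 1 collar--matching is fine, but it should be followed by this surgery step rather than by an attempt to crush $\Sigma_1\setminus\Gamma$ directly.
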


\begin{proof} The proof consists of two steps. In the first
one we show the existence of a surface $\Gamma'$ and
of a map $\Psi: [0,1[\times \overline{\mathcal{B}}_1\to 
\overline{\mathcal{B}}_1$ such that
\begin{itemize}
\item $\partial \Gamma' = \partial \Sigma$,
\item $\Gamma'$ consists of disjoint embedded disks,
\item $\Psi$ satisfies (i) and (ii),
\item $\Psi (t, \Sigma)\to \Gamma'$ as $t\to 1$. 
\end{itemize}
In the second we show the existence of a $\tilde{\Psi}:
[0,1[\times \overline{\mathcal{B}}_1 \to \overline{\mathcal{B}}_1$
such that (i) and (ii) hold and
$\tilde{\Psi} (t, \Gamma')\to \Gamma$ as $t\to 1$.

In order to complete the proof from these two steps,
consider the map $\tilde{\Phi} (s,t, x) = \tilde{\Psi} (t, 
\Psi (s,x))$. Then, for every smooth $g:[0,1[\to [0,1[$ with
$g (0)=0$, the map $\Phi (t, x) = \tilde{\Phi} (g(t), t, x)$
satisfies (i) and (ii) of the Lemma. Next, for any fixed $t$,
if $s$ is sufficiently close to $1$, then
$\tilde{\Phi} (s,t,\Sigma)$ is close, in the sense of varifolds,
to $\tilde{\Psi} (t, \Gamma')$. This allows to find 
a piecewise constant function $h:[0,1[\to [0,1[$ such that
$$
\lim_{t\to 1} \tilde{\Phi} (g(t), t, \Sigma) \;=\; \Gamma
\qquad \qquad \mbox{(in the sense of varifolds)}
$$ 
whenever $g\geq h$ in a neighborhood of $1$. If we choose,
therefore, a smooth $g: [0,1[\to [0,1[$ with $g(0)=0$ and $g\geq h$
on $[1/2, 1[$, the map $\Phi (t,x)=\tilde{\Phi} (g(t),t,x)$ satisfies
all the requirements of the lemma.

We now come to the existence of the maps $\Psi$ and $\tilde{\Psi}$.

\medskip

{\bf Existence of $\Psi$.} Let $\mathcal{G}$ be the set of
all surfaces $\Gamma'$
which can be obtained as $\lim_{t\to 1} \Psi (t, \Sigma)$
for maps $\Psi$ satisfying (i) and (ii). It is easy to
see that any $\Gamma'$ which is obtained from $\Sigma$ through
surgery as in Definition \ref{d:surgery} is contained in $\mathcal{G}$.
Let $\gen_0$ be the smallest genus of a surface contained
in $\mathcal{G}$. It is then a standard fact that 
$\gen (\Gamma') = \gen_0$ if and only if the surface is incompressible.
However, if this holds, then the first homotopy group
of $\Gamma'$ is mapped injectively in the homotopy group of
$\mathcal{B}_1$ (see for instance \cite{Jaco}). Therefore
there is a $\Gamma'\in \mathcal{G}$ which consists of disjoint 
embedded disks and spheres. The embedded spheres can be 
further removed,
yielding a $\Gamma'\in \mathcal{G}$ consisting only of disjoint 
embedded disks. 

\medskip

{\bf Existence of $\tilde{\Psi}$.} Note that each connected component
of $\mathcal{B}_1\setminus \Gamma'$ (and of $\mathcal{B}_1\setminus
\Gamma$) is a, piecewise smooth, embedded sphere.
Therefore the claim can be easily proved by induction
from the case in which $\Gamma$ and $\Gamma'$ consist both of
a single embedded disk. This is, however, a standard fact
(see once again \cite{Jaco}). 
\end{proof}

{\bf Acknoweldgments} Both authors have been supported by a grant of
the Swiss National Foundation.

\nocite{*}
\bibliographystyle{plain}
\bibliography{bibliografia}

\end{document}